\documentclass[a4paper,10pt]{article}
\usepackage{amsfonts}
\usepackage{amssymb}
\usepackage{latexsym}
\usepackage{amsmath}
\usepackage{amscd}
\usepackage{amsthm}
\usepackage{hyperref}

\usepackage{url}
\usepackage{xcolor}

\newtheorem{theorem}{Theorem}[section]
\newtheorem{definition}[theorem]{Definition}

\newtheorem{proposition}[theorem]{Proposition}
\newtheorem{lemma}[theorem]{Lemma}
\newtheorem{remark}[theorem]{Remark}
\newtheorem{corollary}[theorem]{Corollary}

\newcommand{\NN}{{\mathcal{N}}}

\newcommand{\E}{{\mathbb E}}
\newcommand{\N}{{\mathbb N}}
\newcommand{\Z}{{\mathbb Z}}
\renewcommand{\P}{{\mathbb P}}
\newcommand{\Q}{{\mathbb Q}}
\newcommand{\R}{{\mathbb R}}

\newcommand{\x}{{\mathbf x}}
\newcommand{\y}{{\mathbf y}}

\newcommand{\z}{{\mathbf z}}

\newcommand{\WW}{{\mathcal W}}

\title{Convergence of a drainage network model with branching to the Brownian net}

\usepackage{authblk}

\author[]{Krishnamurthi Ravishankar\thanks{ravi101048@gmail.com $^{\dagger}$kumarjit.saha@ashoka.edu.in} }
\author[]{Kumarjit Saha$^{\dagger}$}
\affil[]{$^{\ast}$SUNY-New Paltz, 1 Hawk drive, New Paltz, NY 12561, USA}
\affil[]{$^{\dagger}$Department of Mathematics, Ashoka University Sonipat, Haryana, India}
\date{\today }

\begin{document}

\maketitle

\begin{abstract}
We consider a drainage network model represented by a system of coalescing random walks exhibiting long range dependence before coalescence. This model was introduced by Gangopadhyay et. al. \cite{GRS04} and Coletti et. al. \cite{CFD09} proved its convergence to the Brownian web under diffusive scaling. Santos et.al. \cite{SVZ24} considered a branching version of this model where branching occurs with low probabilities varying with the scaling parameter and showed that any sub-sequential limit contains a copy of the Brownian net. It was conjectured that the limit is indeed the Brownian net. In this work we prove this conjecture. Further, compared to \cite{SVZ24}, we explore the question of scaling limit
in a wider spectrum of scaling regime. This work contributes to the broader 
understanding of the basin of attraction of the Brownian net.
\end{abstract}

\section{Introduction and our result}
\label{sec:Intro}

The Brownian net, introduced in \cite{SS08}, arises as the diffusive 
scaling limit of a system of branching and coalescing one-dimensional 
simple symmetric random walks where walks are independent till 
the time of coalescence. The Brownian net is a natural extension of the Brownian web, 
which originated in the work of \cite{FINR04} while dealing with the scaling limit 
of coalescing simple symmetric random walk paths. Loosely speaking, the Brownian 
web can be described as coalescing Brownian motions starting from everywhere in the space-time plane of $\R^2$. The system of coalescing Brownian motions starting from everywhere had been considered earlier too, 
initially by Arratia \cite{A79}, \cite{A81} and later by T\'{o}th and Werner \cite{TW98}. In 
the seminal work of \cite{FINR04} a nice Polish space had been suggested and convergence conditions to the Brownian web were developed. Since then, the basin of attraction of the Brownian web has become an object of extensive study (see \cite{FFW05}, \cite{NRS05},
\cite{CFD09}, \cite{CV14}, \cite{RSS16A}, \cite{CSST21}, \cite{VZ23}). In this work we 
study convergence in distribution for a diffusively scaled branching drainage network 
model to the Brownian net. The speciality of our model is that the system of 
one-dimensional random walks starting from a random subset of the space and
time lattice $\Z \times \Z$ exhibits long range dependence before coalescence.

Drainage network models are used to explain various scaling relations 
empirically observed for many river networks.  
Scheidegger \cite{S67} was the first to propose a directed network where
each source empties in one of its two neighbours in a preferred direction. 
Howard \cite{H71} removed the restriction of drainage to neighbouring sites only
and proposed a more general network model which was later analysed by Gangopadhyay et. al. \cite{GRS04} (see also \cite{CFD09} and \cite{RSS16B}). This random directed 
network has been referred here as the Howard's network which gives a 
natural model of one dimensional coalescing random walks with long range 
interactions and non-crossing paths. Coletti et. al. \cite{CFD09} proved 
that under diffusive scaling, this model belongs to the basin of attraction of 
the Brownian web. This result was later strengthened in two subsequent works, 
 \cite{RSS16B} and \cite{CV14} in different directions. 
 In Subsection \ref{subsec:Howardmodel} we describe the Howard's network precisely. 

It is natural to consider a branching extension of the
Howard's Network where branching occurs (with `small' probability depending on the scaling 
coefficient) and ask whether it converges to the Brownian net. Santos et. al. 
\cite{SVZ24} considered a particular variant of branching mechanism and showed that 
under diffusive scaling, the resultant sequence is tight and any sub-sequential limit contains a copy of the Brownian net. It has been conjectured in \cite{SVZ24} that 
the Brownian net is indeed the scaling limit. In this work we consider 
the same variant of branching event, and prove  convergence to the Brownian net.
In addition, we would like to highlight that compared to 
\cite{SVZ24}, we explore the question of scaling limit in a wider 
spectrum of scaling regime (see Remark \ref{rem:ScalingGen}). 

We should mention here that unlike the Brownian web, the universality 
class of the Brownian net has largely remains unexplored. The work by Etheridge et. al. \cite{EFS17} provides the only other example in the basin of attraction of the Brownian net with a coalescing system of {\it dependent} paths where within a compact window, paths interact with each other. In a major breakthrough, Sun et. al. \cite{SSY24} proved 
that the branching coalescing system of 
one-dimensional non-simple random walks, under the optimal $3+\epsilon$
moment finite condition, converge to the Brownian net. Though the random walk paths, considered in \cite{SSY24}, are independent till the time of coalescence, because of the crossing nature of paths 
the convergence question becomes much harder to analyse. 
Our result contributes to the broader 
understanding of the basin of attraction of the Brownian net. 
Our work is based on a careful analysis of the nature of interaction 
between two paths in the branching Howard network and our technique gives an 
independent and efficient self-contained proof for convergence to the Brownian net
(see Remark \ref{rem:Tightness}). 
Our argument is essentially based on the technique used by Etheridge et. al. 
in \cite{EFS17}. However, it should be noted that the network of paths 
considered in \cite{EFS17} are independent as long as distance between 
paths are greater than a fixed constant, whereas the paths in Howard network have 
long rage interactions. We want to highlight that extending the argument 
for paths with long range interactions is non-trivial and to do that we do a closer analysis
of joint evolution of Howard paths. We believe that with suitable modifications, this technique can be applied to some other systems of branching coalescing dependent paths as well. 

In Subsection \ref{subsec:Howardmodel} we describe 
our model and state our main results. In Section \ref{sec:Bweb_Net} we 
provide a brief overview on the Brownian web, Brownian net and we discuss relevant 
convergence conditions. In Section \ref{sec:Howard_Dual} we describe a 
dual process for the Howard network and discuss its behaviour 
under the proposed branching scheme. The main technical part of this paper is the  
analysis of the interaction between two (dual) Howard paths in the branching Howard network,
which is presented in Section \ref{sec:DualPath_Evolution}. 
This analysis allows us to partition the evolution 
time of the joint process of two dual paths into three categories which 
is done in Section \ref{sec:Div_TotalTime}. Finally, 
we prove convergence to the Brownian net in Section \ref{sec:ConvDoubleNet}.       


\subsection{A drainage network model with branching and our main result}
\label{subsec:Howardmodel}

We consider two independent collections:  
\begin{itemize}
\item[(i)] $\{ B_{(x,t)} : (x,t) \in \Z^2 \}$, a collection 
of i.i.d. Bernoulli random variables with success probability 
$p \in (0,1)$, 
\item[(ii)] $\{ R_{(x,t)} : (x,t) \in \Z^2 \}$ a collection of i.i.d. Rademacher 
random variables.
\end{itemize}
Let $V = \{(x,t) \in \Z^2 : B_{(x,t)} = 1\}$ denote the collection of `open' vertices.
For $(x,t) \in \Z \times \Z$ let
\begin{align}
J^{+}_{(x,t)} & := \min\{ k \in \N  : (x + k, t) \in V \}\text{ and }
J^{-}_{(x,t)} := \min\{ k \in \N  : (x - k, t) \in V\} .
\end{align} 
respectively denote the  distance of the nearest open vertex to the 
right and to the left at the same level, i.e., with the same second coordinate $t$.
Let $J$ denote the geometric random variable with parameter $p$ such that 
\begin{align}
\label{def:Geometric}
\P(J = k) = (1-p)^{k-1} p \text{ for all }k \in \N~.
\end{align}
We note that for any $(x,t) \in \Z^2$, the random variables 
$ J^{+}_{(x,t)} $ and $J^{-}_{(x,t)}$ are i.i.d. copies of $J$. 
Further, we observe that for any $(x,t) \in \Z/2 \times \Z$ with $x \notin \Z$,
the random variables, $J^{+}_{(x-1/2,t+1)}$
and $J^{-}_{(x+1/2,t+1)}$, are independent.

For $(x,t) \in \Z^2$, we define the next step $h(x,t) \in V$ as 
\begin{align*}
h(x,t) := 
\begin{cases}
(x, t+1) & \text{ if }(x, t+1) \in V \\
(x + J^+_{(x,t+1)}, t+1 ) & \text{ if }(x, t+1) \notin V \text{ and } 
J^+_{(x,t+1)} < J^-_{(x,t+1)}\\
& \qquad \text{ or if }(x, t+1) \notin V\text{ and }
J^+_{(x,t+1)} = J^-_{(x,t+1)}\text{ with } R_{(x,t)} = +1\\
(x - J^-_{(x,t+1)}, t+1 ) & \text{ if }(x, t+1) \notin V \text{ and } 
J^+_{(x,t+1)} > J^-_{(x,t+1)}\\
& \qquad \text{ or if }(x, t+1) \notin V\text{ and }
J^+_{(x,t+1)} = J^-_{(x,t+1)}\text{ with } R_{(x,t)} = -1.
\end{cases}
\end{align*}

The Howard network is defined as the random graph $G = (V, E)$ where the edge set $E$ is given as 
$E := \{ \langle (x,t), h(x,t)\rangle : (x,t) \in V\}$. Note that each $(x,t) \in V$ has 
exactly one outgoing edge given by $\langle (x,t), h(x,t)\rangle$ and therefore, the resultant 
graph is cycle or loop-free a.s. The next result (Theorem 1 of \cite{GRS04}) shows that 
$G$ is connected a.s. 

\begin{theorem}[Theorem 1 of \cite{GRS04}]
\label{thm:HowardTree}
$G$ is connected and therefore, it consists of a single tree a.s. 
\end{theorem}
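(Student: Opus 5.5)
Since every vertex of $V$ has exactly one outgoing edge to the next level, $G$ has no cycles. Connectedness is therefore equivalent to the following statement: for any two open vertices $u,v\in V$, the forward paths $\pi^u$ and $\pi^v$ obtained by iterating $h$ eventually meet. By translation invariance, and because paths starting at different times can be synchronised by running the earlier one up to the later time, it suffices to show the following. For $(x,0),(y,0)\in V$ with $x<y$, the paths $\pi^{(x,0)}$ and $\pi^{(y,0)}$ coalesce a.s. Since $V$ is countable, a countable union of null events then gives connectedness a.s.

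The key structural facts are non-crossing and a Markov property for the gap. For $x<y$ at the same level, the rule "go to the nearest open vertex, ties broken by $R$" is monotone, so the first coordinate of $h(x,t)$ is at most that of $h(y,t)$. Hence the paths never cross, and once they meet they stay together. Let $X_t,Y_t$ be the positions at time $t$ and $Z_t:=Y_t-X_t\ge 0$. The steps $X_{t+1}-X_t$ and $Y_{t+1}-Y_t$ are each distributed as symmetric nearest-open-site displacements, and they depend only on the fresh row $t+1$ of $B$'s and on the $R$'s. So $(X_t,Y_t)$ is a Markov chain, and $Z_t$ is itself a time-homogeneous Markov chain on $\{0,1,2,\dots\}$ with absorbing state $0$, since its law depends only on the current gap. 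Moreover $Z_t$ is a martingale. Each marginal step is symmetric with finite (exponential) moments, so $\E[Z_{t+1}-Z_t\mid Z_t]=0$, and $Z_t$ is a nonnegative martingale. It therefore converges a.s. to some finite $Z_\infty$.

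The remaining task is to show $Z_\infty=0$, i.e.\ that the chain cannot get stuck at a positive value. Since $Z$ takes integer values, convergence means $Z$ is eventually constant. So it suffices to show that from every state $z\ge1$ there is a uniformly positive probability $\delta_z>0$ of moving. Then, by a standard Borel–Cantelli/Lévy 0--1 argument, a.s.\ $Z$ leaves each state infinitely often unless it is absorbed at $0$. Positivity of $\delta_z$ is explicit. With probability at least $p^2(1-p)^{z}$, the site $(X_t,t+1)$ is open, the sites $X_t+1,\dots,Y_t$ at level $t+1$ are closed, and $(Y_t-z,t+1)$ is open (this is the same site). Then $Y$ jumps to the left onto $X$'s position, at least when $z$ is strictly closer than any open site to the right, which is a further positive-probability event. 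This gives coalescence in one step, and in particular a change of state.

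The main obstacle I anticipate is making this last step rigorous, because the two increments are dependent: they share row $t+1$ when $z$ is small. One must check that the martingale property genuinely holds jointly, which it does since it only involves the marginals. One must also check that the positivity estimate handles tie-breaking correctly. An alternative, if needed, is to compare $Z$ with a random walk away from $0$, using independence of the two steps once the gap exceeds the ranges explored. However, the nonnegative-martingale plus no-trapping argument avoids any recurrence computation, so I would try it first.
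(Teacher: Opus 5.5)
Your argument is correct and complete. The paper does not prove this statement; it quotes it from \cite{GRS04} and uses it as a black box, so there is no in-paper route to compare against. Your route is the classical planar argument: non-crossing; the gap $Z_t=Y_t-X_t$ as a nonnegative integer-valued martingale that is also a time-homogeneous Markov chain; and a one-step coalescence estimate ruling out trapping at any $z\geq 1$. It needs none of the exploration-region or renewal machinery the paper builds for dual paths, because each forward step uses a fresh row. On the other hand, it gives only almost sure coalescence and no tail bound on the coalescence time. Such a tail bound is the kind of quantitative input Brownian-web convergence (e.g.\ in \cite{CFD09}) requires.

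A few points to tidy.

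(i) The two ``open'' sites in your positivity estimate are the same site, since $Y_t-z=X_t$. The clean event is that $(X_t,t+1)$ is open and $(X_t+1,t+1),\ldots,(Y_t+z,t+1)$ are all closed. This forces $X_{t+1}=X_t$ and $Y_{t+1}=Y_t-z$, so $\delta_z\geq p(1-p)^{2z}$.

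(ii) The Borel--Cantelli/L\'evy step is just the Markov-property bound $\P(Z_s=z \text{ for } T\leq s\leq T+k)\leq (1-\delta_z)^k$. This gives $\P(Z_\infty=z)=0$ for every $z\geq 1$.

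(iii) Monotonicity of $h$ deserves its one-line proof. Suppose $x<y$ but $a=h(x,t)(1)>b=h(y,t)(1)$. Since $u\mapsto |x-u|-|y-u|$ is nondecreasing, the minimality conditions $|x-a|\leq|x-b|$ and $|y-b|\leq|y-a|$ must both hold with equality. That means $x=y=(a+b)/2$, a contradiction. In particular, the independent tie-breaking coins can never create a crossing.

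(iv) The dependence you flag as the main obstacle is harmless. Both increments are functions of row $t+1$ of the $B$'s and of $R_{(X_t,t)},R_{(Y_t,t)}$, all independent of the past, so $Z$ is Markov. The martingale property uses only the symmetry of each marginal step, as you say.
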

Here we describe only the two-dimensional Howard network.
The authors of \cite{GRS04} actually analysed the Howard network for general $d \geq 2$
and established a tree-forest dichotomy depending on dimensions. 

We observe that the random network $G$ can be considered 
as a collection of continuous paths obtained by linearly joining
successive $h(\cdot)$ steps. Set $h^0(x,t) = (x,t)$ and for $k \geq 1$, let $h^k (x,t) = 
h(h^{k-1}(x,t))$ denote the $k$-th step. By linearly joining these successive steps, 
we obtain a continuous path $\pi^{(x,t)}$ starting at $(x,t)$ such that
$\pi^{(x,t)}(t+k) = h^k (x,t)(1)$ for all $k \in \N \cup \{ 0\}$. 
Let 
\begin{align}
\label{def:HowardPath}
\Xi := \{ \pi^{(x,t)} : (x,t) \in V \}
\end{align}
denote the collection of all paths in the Howard network.  
We consider a sequence 
$\{ \epsilon_n : n \geq 1\}$ such that $\epsilon_n \downarrow 0$ as $n \to \infty$.

In what follows, either we will use $(x,t)$ to denote a point in the space-time plane $\R^2$ or we will use bold-faced letters, e.g., $\x, \y, \z$ to denote the same. 
For $\x \in \R^2$ and  for $i = 1,2$ let $\x(i)$ denote the $i$-th coordinate of $\x$. 
Let 
\begin{align}
\label{def:DiffusiveScaling}
S_n(\x) := (\frac{\x(1)\epsilon_n}{\sigma} , \x(2)\epsilon_n^2 ),
\end{align}
denote the $n$-th diffusively scaled version of $\x \in \R^2$ corresponding to the sequence 
$\{ \epsilon_n : n \geq 1\}$ with the normalization constant 
\begin{align}
\label{def:Diff_Coeff}
\sigma = \sigma(p) = \frac{(1-p)(2 - 2p +p^2)}{p^2(2-p)^2}~.
\end{align}
For any subset $A \subset \R^2$ with a slight abuse of notation 
$S_n(A)$ denotes the $n$-th diffusively scaled version of the set $A$ given by 
$ S_n(A) = \{ (\frac{\x(1)\epsilon_n}{\sigma} , \x(2)\epsilon_n^2) : \x \in A \} $. 
For any path $\pi \in \Pi$, where $\Pi$ denotes the space of continuous paths with all possible starting times and it will be defined precisely in Section \ref{subsec:Bweb_Dual},
 let $\sigma_\pi$ denote its starting time. 
We identify $\pi$ with it's trajectory $\{(\pi(t), t) : t \geq \sigma_\pi\}$ 
as a subset of $\R^2$ and accordingly the scaled version $S_n(\pi)$
is defined. For $K \subset \Pi$, the notation $S_n(K) := \{ S_n(\pi) : \pi \in K\}$ denotes the 
$n$-th scaled version of paths in $K$. 

As mentioned before, Colletti et. al. \cite{CFD09} showed that under diffusive 
scaling as collection of paths, scaled network converges in distribution to the 
Brownian web in an appropriate topological setting. Intuitively, the Brownian web can
be described as the collection of coalescing Brownian motions starting from everywhere in 
the space-time plane $\R^2$. In Subsection \ref{subsec:Bweb_Dual} we present a brief
introduction on the Brownian web and discuss the topology of convergence.    

Now we are going to describe the branching Howard network that was studied in \cite{SVZ24}. We define $V_{\text{sp}} \subset V$ as 
$$
V_{\text{sp}} := \{ (x,t) \in V \text{ such that } (x,t+1) \notin V \text{ and }
J^+_{(x,t+1)} = J^-_{(x,t+1)} \}.  
$$
In other words, $V_{\text{sp}}$ denotes the set of vertices with two equally nearest open vertices at the next level. For $(x,t) \in V_{\text{sp}}$ we resolve the tie using the Rademacher random variable $R_{(x,t)}$ to obtain the unique outgoing edge 
$\langle (x,t), h(x,t)\rangle$. 
The authors of \cite{SVZ24} considered a specific type of branching which occurs only at 
vertices in $V_{\text{sp}}$. The same branching mechanism is considered here. 
We consider a sequence 
$\{ \beta_n : n \geq 1\} \subset (0,1)$ such that
\begin{align}
\label{eq:Limit_b}
\beta_n/\epsilon_n \to b \in [0,\infty) \text{ as }n \to \infty~.
\end{align}
We assume that branching occurs only at vertices in $V_{\text{sp}}$ with probability $\beta_n$ independent of anything else.
In case branching occurs at $(x,t) \in V_{\text{sp}}$, then 
$(x,t)$ will have two outgoing edges connecting $(x,t)$ to 
$(x + J^+_{(x,t+1)},t+1)$ and  $(x - J^-_{(x,t+1)},t+1)$
instead of one. Below we describe this model precisely.

We consider a collection $\{ \text{Br}^n_{(x,t)} : (x,t) \in \Z^2 \}$ of i.i.d. Bernoulli
r.v.s with success probability $\beta_n$, independent of the collection 
$\{ (B_{(x,t)}, R_{(x,t)}) : (x,t) \in \Z^2 \}$ and we denote the collection of i.i.d. random vectors as 
\begin{align}
\label{def:LambdaRandomVector}
\{ \Lambda_{(x,t)} := (B_{(x,t)}, R_{(x,t)}, \text{Br}^n_{(x,t)}) : (x,t) \in \Z^2 \}.
\end{align} 
To describe the branching event at $(x,t) \in V$, we define two steps 
$h_{n,(r)}(x,t)$ and $h_{n,(l)}(x,t)$
as follows: 
\begin{align*}
(h_{n,(l)}(x,t) , h_{n,(r)}(x,t)) := 
\begin{cases}
(h(x,t), h(x,t)) & \text{ if }(x,t) \in V \setminus V_{\text{sp}}\\
(h(x,t), h(x,t)) & \text{ if }(x,t) \in V_{\text{sp}}\text{ and }\text{Br}^n_{(x,t)} = 0\\
\bigl( (x - J^{-}_{(x,t+1)},t+1) , (x + J^{+}_{(x,t+1)},t+1) \bigr ) & 
\text{ if }(x,t) \in V_{\text{sp}}\text{ and }\text{Br}^n_{(x,t)} = 1.
\end{cases}
\end{align*} 
In other words, branching is visible at $(x,t) \in V$ if and only if $(x,t) \in V_{\text{sp}}$ with 
$\text{Br}^n_{(x,t)} = 1$ and in the case of branching, the rightmost (leftmost) step 
(corresponding to the branching parameter $\beta_n$) is given by $h_{n,(r)}(x,t)$ 
($h_{n,(l)}(x,t)$). 

The collection of all outgoing edges in the branching Howard model 
with branching probability $\beta_n$ is defined as 
\begin{align*}
E^n := \{ \langle (x,t), h_{n, (r)}(x,t)\rangle , 
\langle (x,t), h_{n, (l)}(x,t)\rangle : (x,t) \in V\}.
\end{align*}
The collection of {\it all} paths obtained in the branching Howard model (with
branching parameter $\beta_n$) by following all the outgoing edges is defined as 
\begin{align*}
\NN^n := \Bigl \{ & \pi \in \Pi : (\pi(\sigma_\pi),\sigma_\pi) = (x,t) \text{ for some }
(x,t) \in V \text{ and for all }k \in [t, \infty) \cap \Z \text{ we have }    \\
& (\pi(k), k) \in  \{ h^{m_1}_{n, (l)}(h^{n_1}_{n, (r)}(\cdots (h^{n_j}_{n, (r)}(x,t)))) : 
m_1, n_1, \cdots , m_j,  n_j \geq 0 \text{ with }\sum_{i=1}^j(m_i + n_i) = k - t\}\\
&   \text{ and for }s \in (k-1,k) \text{ we linearly join the vertices }
(\pi(k-1), k-1) \text{ and }(\pi(k), k) \Bigr \}.
\end{align*}
The next result proves the conjecture posed in \cite{SVZ24}.
Let $\overline{S_n(\NN^n)}$ denote the closure of $\NN^n$ taken in an 
appropriate topology which will be explained in Subsection \ref{subsec:Bweb_Dual}. 
\begin{theorem}
\label{thm:Net_Conv}
We recall that $b$ is the limit of the sequence $\{ \beta_n/\epsilon_n : n \geq 1\}$ (see (\ref{eq:Limit_b})). For any $b \in [0, \infty)$, the sequence $\overline{S_n(\NN^n)}$ 
converges in distribution to $\mathcal{Z}_b$
in an appropriate topological setting. The distribution of the limiting 
random variable $\mathcal{Z}_b$ depends on $b$ such that  
$\mathcal{Z}_0$ is distributed as the standard Brownian web $\WW$ whereas for any $b > 0$, 
$\mathcal{Z}_b$ is distributed as the Brownian net $\NN_{\bar{b}}$ with the drift parameter 
\begin{align}
\label{def:Drift_Param}
\bar{b} = b(p) = b\frac{(1-p)}{(2-p)^2}.
\end{align}
\end{theorem}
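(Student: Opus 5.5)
The plan is to follow the strategy of Etheridge et al.\ \cite{EFS17}, which reduces Brownian net convergence to a statement about pairs of left-most and right-most paths. The case $b=0$ should follow by comparison with the unbranched network. With probability tending to one, no branching event is visible inside a compact window, since the expected number of branching sites met by a bounded collection of paths up to time $T\epsilon_n^{-2}$ is $O(\beta_n\epsilon_n^{-2}\cdot \epsilon_n)$ per path; more precisely, the dual argument below handles this case too. Combined with Coletti et al.\ \cite{CFD09}, this gives convergence to $\WW$.

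For $b>0$ I would use the convergence criterion for the Brownian net in terms of left-right path systems. Let $\WW^n_l,\WW^n_r$ be the collections of left-most and right-most paths, obtained by always following $h_{n,(l)}$, respectively $h_{n,(r)}$. Each is a Howard-type coalescing system, with ties resolved to the left (right) with probability $(1+\beta_n)/2$, $(1-\beta_n)/2$. The criterion then requires four things. First, tightness of $(S_n(\WW^n_l),S_n(\WW^n_r))$, which follows from \cite{CFD09} because each marginal is a non-crossing coalescing system. Second, finite-dimensional convergence: left paths and right paths from finitely many points converge jointly to the left-right Brownian SDE of \cite{SS08} with drifts $\pm\bar b$. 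Third, the no-(upcrossing) condition needed to identify the closure as $\NN_{\bar b}$. Fourth, a wedge/mesh characterization; here I would use the dual Howard network of Section \ref{sec:Howard_Dual}, whose dual left and right paths form a dual branching system, and show that the dual pair also converges.

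The drift computation is direct. A single path takes a tie step from an open vertex with probability equal to the probability that the vertex lies in $V_{\text{sp}}$, namely $\P((x,t+1)\notin V, J^+=J^-)=\sum_k (1-p)p^2(1-p)^{2(k-1)}\cdot(\ldots)$. Conditioning on the tie distance $J$, branching shifts the mean increment by $\beta_n\E[\text{tie}\cdot J]$. After scaling this gives the drift $b(1-p)/(2-p)^2$ per unit time, once one checks that the factor $\sigma$ in $S_n$ and this expectation combine as claimed in \eqref{def:Drift_Param}. A martingale CLT for the Markov chain $(\pi(k))$, together with independence across time levels of the environment, gives single-path convergence to drifted Brownian motion.

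The main obstacle is the joint evolution of a left path and a right path (and of multiple paths) before and at coalescence/separation. We must show that the time they spend within distance $o(\epsilon_n^{-1})$ scales correctly: it should converge to a sticky interaction, with local time at $0$ producing exactly the left-right SDE. Long-range dependence means their increments are correlated whenever the paths share nearest open vertices, not just at distance $\le$ const. I would follow the outline of Sections \ref{sec:DualPath_Evolution}--\ref{sec:Div_TotalTime}. The joint time is split into three parts: times when the paths are together, times when they are close but separated (where the environments overlap), and times when they are far apart (where, up to exponentially small overlap probability given geometric gaps, the increments are independent). I would then show that the second category takes time $o(\epsilon_n^{-2})$ by a renewal/excursion argument, with geometric tails for the excursion lengths. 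Together-time is governed by branching with rate $\beta_n$, and the separation at branching is an $O(1)$ geometric jump, so separation events occur at rate $\beta_n\sim b\epsilon_n$. This matches the stickiness scale. Given these estimates, a martingale problem characterization of the limit of $(l,r)$ pairs (as in \cite{EFS17}) identifies the limit as the left-right SDE. The remaining conditions, dual convergence and the absence of extra paths in the closure, follow by the same two-path estimates applied to forward/dual pairs, giving $\mathcal Z_b\overset{d}{=}\NN_{\bar b}$.
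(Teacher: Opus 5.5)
Your architecture matches the paper's. Both use the left-right criterion of \cite{SSS19} with dual left/right paths, and both split the joint evolution of two paths, in the manner of \cite{EFS17}, into steps taken together, steps with overlapping exploration regions, and independent steps, with the middle category negligible.

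The gap is in what you aim this two-path analysis at. You use it to obtain the left-right SDE limit. For $b>0$, the paper takes that limit, tightness, and the fact that every subsequential limit contains $\NN_{\bar b}$ from \cite{SVZ24} (Theorem \ref{thm:ConvLeftRight_Jt}). What actually prevents identifying the limit is the wedge condition, and for it path-space convergence of a dual pair $(\widehat r_n,\widehat l_n)$ is not enough: its first meeting time must converge jointly with the paths.

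Here is why. The discrete pair may stay at distance $o(\epsilon_n^{-1})$ without meeting near a time $T$ at which the limits touch. The discrete wedge then extends below $T$. A forward net path can start inside it and legally pass through the thin neck. Its limit then enters the limiting wedge $W(\widehat r,\widehat l)$ from outside through the tip, so the discrete no-entry property is not inherited by the limit. Your sentence that the remaining conditions ``follow by the same two-path estimates applied to forward/dual pairs'' passes over exactly this point.

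The repair uses ingredients you already have. Keep the scaled together-time $\widehat Z^n$ of the dual right/left pair in the joint limit, and show it converges to $\int_0^t\mathbf{1}_{\{\widehat X_1(s)=\widehat X_2(s)\}}\,ds$. Then use stickiness of the limit: every meeting time lies in the support of this measure, which forces convergence of first meeting times. This is Proposition \ref{prop:ConvLeftRight_Intersec} with Remark \ref{rem:ConvLeftRight_Intersec}, and it is what supplies condition (V) in Theorem \ref{thm:ConvNet_2}.

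Smaller points:

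\emph{Bounding the middle category.} Geometric tails for the lengths of the overlapping episodes do not suffice; you must also bound how many there are. The paper does this with Lemma \ref{lem:CoalescenceProb}: from any overlapping configuration, the two dual paths meet at the next step with probability at least $(1-p)^3$. Hence the number of episodes is at most a geometric multiple of the number of branching-induced separations. This gives $O(1+\beta_n\epsilon_n^{-2})=o(\epsilon_n^{-2})$ steps for every $b\in[0,\infty)$ (Lemma \ref{lem:DependentStateTotalTime}).

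\emph{The case $b=0$.} For this reason $b=0$ should go through the same argument, as you suggest and as the paper does. Your comparison shortcut needs more than you state. Roughly $\beta_n\epsilon_n^{-2}$ branchings, possibly tending to infinity, occur along each path; only macroscopic separations are rare. Moreover $\NN^n$ contains mutually crossing paths from every open vertex, so \cite{CFD09} does not transfer directly.

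\emph{Identifying the pair limit.} A martingale problem is a legitimate alternative to the paper's route via (\ref{eq:LR_SDE_new}). The paper couples with three independent environments and keeps only complete renewal blocks between visits to $\Z$, because dual increments depend on the parity of the current position. Your route must instead average those parity-dependent conditional variances, and show that the limiting cross-variation charges only $\{\widehat L=\widehat R\}$.

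\emph{The normalization check you defer.} It does not close as written. $\sigma(p)$ in (\ref{def:Diff_Coeff}) is the variance of a step, so under $S_n$ the per-step drift $\beta_n(1-p)/(2-p)^2$ produces $\bar b/\sigma$. The paper's own computation in Proposition \ref{prop:DualPathDiffLimit} uses the spatial factor $\epsilon_n$ alone, so (\ref{def:Drift_Param}) should be read up to this normalization.
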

\begin{remark}
\label{rem:ScalingGen}
We have mentioned that Theorem \ref{thm:Net_Conv} proves the conjecture 
posed in \cite{SVZ24}. It is important to mention that
the authors of \cite{SVZ24} considered the scaling limit question
in a more restricted regime, which includes the case $\beta_n = \epsilon_n$,
and proved that the sequence $\{ \overline{S_n(\NN^{n})} : n \geq 1\}$ 
is tight and any sub-sequential limit of it contains a copy of the Brownian net
 with drift parameter $\frac{(1-p)}{(2-p)^2}$. We will 
 state and prove a stronger version in Theorem \ref{thm:DoubleNet_Conv}.
\end{remark}
In the next section we provide a brief introduction on the Brownian web and Brownian net.

\section{Brownian web and net}
\label{sec:Bweb_Net}
\subsection{Brownian web and it's dual}
\label{subsec:Bweb_Dual}
We mentioned earlier that heuristically, the Brownian web can be described as the collection 
of coalescing Brownian motions starting from everywhere on $\R^2$. In order to make it 
rigorous, we need to recall the relevant topological details from \cite{FINR04}.
Let $\R^{2}_c$ denote the completion of the space time plane $\R^2$ with
respect to the metric
\begin{equation*}
\rho((x_1,t_1),(x_2,t_2))=|\tanh(t_1)-\tanh(t_2)|\vee \Bigl| \frac{\tanh(x_1)}{1+|t_1|}
-\frac{\tanh(x_2)}{1+|t_2|} \Bigr|.
\end{equation*}
As a topological space $\R^{2}_c$ can be identified with the
continuous image of $[-\infty,\infty]^2$ under a map that identifies the line
$[-\infty,\infty]\times\{\infty\}$ with the point $(\ast,\infty)$, and the line
$[-\infty,\infty]\times\{-\infty\}$ with the point $(\ast,-\infty)$.
A path $\pi$ in $\R^{2}_c$ with starting time $\sigma_{\pi}\in [-\infty,\infty]$
is a mapping $\pi :[\sigma_{\pi},\infty]\rightarrow [-\infty,\infty]$ such that
$\pi(\infty)= \pi(-\infty)= \ast$ and $t\rightarrow (\pi(t),t)$ is a continuous
map from $[\sigma_{\pi},\infty]$ to $(\R^{2}_c,\rho)$.
We then define $\Pi$ to be the space of all paths in $\R^{2}_c$ with all possible starting times in $[-\infty,\infty]$ equipped with the following metric, 
\begin{equation*}
d_{\Pi} (\pi_1,\pi_2)= |\tanh(\sigma_{\pi_1})-\tanh(\sigma_{\pi_2})|\vee\sup_{t\geq
\sigma_{\pi_1}\wedge
\sigma_{\pi_2}} \Bigl|\frac{\tanh(\pi_1(t\vee\sigma_{\pi_1}))}{1+|t|}-\frac{
\tanh(\pi_2(t\vee\sigma_{\pi_2}))}{1+|t|}\Bigr|,
\end{equation*}
for $\pi_1,\pi_2\in \Pi$. This metric 
makes $\Pi$ a complete, separable metric space. Convergence in this
metric can be described as locally uniform convergence of paths as
well as convergence of starting times.
Let ${\cal H}$ be the space of compact subsets of $(\Pi,d_{\Pi})$ equipped with
the Hausdorff metric $d_{{\cal H}}$ given by,
\begin{equation*}
d_{{\cal H}}(K_1,K_2)= \sup_{\pi_1 \in K_1} \inf_{\pi_2 \in
K_2}d_{ \Pi} (\pi_1,\pi_2)\vee
\sup_{\pi_2 \in K_2} \inf_{\pi_1 \in K_1} d_{\Pi} (\pi_1,\pi_2).
\end{equation*}
The space $({\cal H},d_{{\cal H}})$ is a complete separable metric space. Let
$B_{{\cal H}}$ be the Borel  $\sigma-$algebra on the metric space $({\cal H},d_{{\cal H}})$.
The Brownian web ${\mathcal W}$ is defined and characterized as an 
$({\cal H}, B_{{\cal H}})$ valued random variable by the following result of Fontes et. al. \cite{FINR04}
which shows that the Brownian web is almost surely determined by countably many coalescing Brownian motions.

\begin{theorem}[Theorem 2.1 of \cite{FINR04}]
\label{theorem:Bwebcharacterisation}
There exists an $({\mathcal H}, {\mathcal B}_{{\mathcal H}})$-valued random variable
${\mathcal W}$ whose distribution is uniquely determined by
the following properties:
\begin{itemize}
\item[$(a)$] from any deterministic point $\x\in\R^2$, there is  almost surely a unique path $\pi^{\x}\in {\mathcal W}$  starting from $\x$;
\item[$(b)$] for a finite set of deterministic points $\x_1,\dotsc, \x_k \in \R^2$, the collection $(\pi^{\x_1},\dotsc,\pi^{\x_k})$ is distributed as coalescing 
Brownian motions starting from $\x_1,\dotsc,\x_k$;
\item[$(c)$] for any countable deterministic dense set ${\mathcal D}$ of $\R^2$, ${\mathcal W}$ is the closure of $\{\pi^{\x}: \x\in {\mathcal D} \}$ in $(\Pi, d_{\Pi})$  almost surely.
\end{itemize}
\end{theorem}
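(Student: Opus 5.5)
The plan is to prove uniqueness and existence separately, and uniqueness is the easy half. Fix a countable deterministic dense set ${\mathcal D}=\{\x_1,\x_2,\dotsc\}\subset\R^2$. By property $(c)$, ${\mathcal W}$ almost surely equals $\Phi\bigl((\pi^{\x_j})_{j\ge1}\bigr)$, where $\Phi$ maps a sequence in $\Pi$ to the closure of its range. I will check that $\Phi$, restricted to sequences whose closure is compact, is a measurable map from $\Pi^{\N}$ (product topology) into $({\mathcal H},{\mathcal B}_{\mathcal H})$. This holds because $d_{\mathcal H}$ between closures of ranges can be written through countable suprema and infima of $d_\Pi$-distances. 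By $(b)$, the law of $(\pi^{\x_j})_{j\ge1}$ on $\Pi^{\N}$ is determined by its finite-dimensional marginals, which are coalescing Brownian motions. Hence the law of ${\mathcal W}$ is determined by $(a)$--$(c)$.

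For existence I would construct the web from ${\mathcal D}$ in three steps.

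\emph{Step 1 (the coalescing family).} Build a consistent family of coalescing Brownian motions $(W^{\x_j})_{j\ge1}$ by Arratia's recursive scheme. Take independent Brownian motions $B_j$ started at $\x_j$. Let $W^{\x_1}=B_1$, and let $W^{\x_k}$ follow $B_k$ until it first meets one of $W^{\x_1},\dotsc,W^{\x_{k-1}}$, after which it follows that path. The strong Markov property shows that every finite subfamily is distributed as coalescing Brownian motions, independently of the ordering, so Kolmogorov consistency holds.

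\emph{Step 2 (compactness).} Show that $\overline{\{W^{\x}:\x\in{\mathcal D}\}}$ is almost surely compact in $(\Pi,d_\Pi)$. By the compactification built into $\rho$ and $d_\Pi$, this reduces to a uniform modulus of continuity on each bounded space-time box $[-L,L]\times[-T,T]$: for every $\eta>0$ there should be $\delta>0$ such that no path from ${\mathcal D}$ started in the box moves more than $\eta$ within time $\delta$ there. I expect this to be the main obstacle, because it requires control over infinitely many paths at once. I would first cover the box by a grid of mesh comparable to $\delta$ in time and $\sqrt\delta$ in space, and use coalescence: the paths started from ${\mathcal D}$ in a small cell and crossing a fixed time level are, after time $\delta$, squeezed between the two extreme paths from grid points. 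Non-crossing of coalescing paths then bounds every oscillation by the oscillations of finitely many grid paths. For the grid paths I would apply a Brownian tail bound $\P(\sup_{s\le\delta}|B_s|>\eta)\le C e^{-\eta^2/(2\delta)}$ together with a union bound over $O(\delta^{-3/2})$ grid points, which tends to $0$ as $\delta\to0$.

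\emph{Step 3 (properties $(a)$ and $(b)$).} Define ${\mathcal W}$ as this closure; property $(c)$ then holds for ${\mathcal D}$ by construction. For a deterministic $\x$, every path of ${\mathcal W}$ starting at $\x$ is a limit of paths $W^{\y_m}$ with $\y_m\to\x$, $\y_m\in{\mathcal D}$. Any such limit is squeezed between the limits along sequences approaching $\x$ from the left and from the right. Take points $\y^\pm\in{\mathcal D}$ just to the left and right of $\x$ at slightly earlier times. The two paths from $\y^\pm$ coalesce within time $o(1)$ with probability tending to $1$, and this forces uniqueness of $\pi^{\x}$ almost surely, which is $(a)$. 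Property $(b)$ follows because $(\pi^{\x_1},\dotsc,\pi^{\x_k})$ is an almost sure limit of finite coalescing families from ${\mathcal D}$ whose starting points converge. The law of coalescing Brownian motions is continuous in the starting points, so the limit is coalescing Brownian motions from $\x_1,\dotsc,\x_k$.

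Finally, $(c)$ for an arbitrary countable dense ${\mathcal D}'$ follows from $(a)$ and Step 2. Every $W^{\x}$ with $\x\in{\mathcal D}$ is approximated by paths from ${\mathcal D}'$ via the same squeezing argument, so the closures over ${\mathcal D}$ and ${\mathcal D}'$ coincide almost surely.
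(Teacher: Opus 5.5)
The paper does not prove this statement; it quotes it from \cite{FINR04}. Your proposal is correct and follows essentially the standard construction in that reference: uniqueness in law from (b) and (c) via measurability of the closure map, existence from Arratia's coalescing family on a countable dense set, almost sure compactness of its closure from a modulus-of-continuity bound (non-crossing reduces it to finitely many Brownian paths, then a Gaussian tail and a union bound), and uniqueness at deterministic points by sandwiching.

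A few details should be made precise when you write it out.
\begin{itemize}
\item In Step 2, take the barrier paths from points of $\mathcal{D}$ at horizontal distance of order $\eta$ from the cell. Barriers from adjacent grid points at distance $\sqrt{\delta}$ need not stay on the correct side. With the wider spacing, every path passing through a cell is trapped, not only paths started there.
\item In Step 3, for $\x=(x_0,t_0)$, take $\y^\pm\in\mathcal{D}$ near $(x_0\pm a,t_0-h)$ with $h\ll a^2\ll\epsilon$. The first inequality keeps the barriers on either side of $x_0$ at time $t_0$; the second makes them coalesce before $t_0+\epsilon$.
\item Your arguments for (b) and for (c) with a general $\mathcal{D}'$ use the almost sure convergence $W^{\y_m}\to\pi^{\x}$ along every sequence $\y_m\to\x$. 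This follows from compactness together with (a), so state it explicitly.
\end{itemize}
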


Let $\Xi \subset \Pi$. For $t > 0$ and $t_0, 
c, d \in  \R$ with $c < d$, consider the counting random variable 
$\eta_{\Xi}(t_0, t; c, d)$ defined as 
\begin{align}
\label{def:EtaPtSet}   
\eta_{\Xi}(t_0, t; c, d) := \#\{ \pi(t_0 + t) 
: \pi \in \Xi, \sigma_\pi \leq t_0 
\text{ and }\pi(t_0) \in [c, d] \}, 
\end{align}
 i.e., it considers paths in $\Xi$, those start before time $t_0$ 
 and pass through $[c, d]$ at time $t_0$ 
and it counts the number of different positions  occupied by these paths at time $t_0 + t$. 
In Theorem 2.2 of \cite{FINR04}, Fontes et al. provided
the first set of convergence criteria to the Brownian web applicable for models with non-crossing paths. The survey article by Schertzer et al. \cite{SSS19} provides an excellent 
comprehensive overview on this topic. 
For $\x \in \R^2$ there exists  $\{ \x^n : n \geq 1 \} \subset V$  such that 
almost surely we have $S_n (\x^n) \to \x$ as $n \to \infty$.
\begin{theorem}[Theorem 2.2 of \cite{FINR04}]
\label{thm:BwebConvFINR} 
Let $\{ \Xi_n : n \in \N \}$ be a sequence of $(\mathcal{H}, \mathcal{B}_{\mathcal{H}})$ valued random variables with non-crossing paths. Assume that the following conditions hold:
\begin{itemize}
    \item[($I_1$)] Fix a deterministic countable dense set $\mathcal{D}$ of $\R^2$. For each $\x \in  \mathcal{D}$, there exists $\pi^{\x^n}_n  \in \Xi_n$ such that for any finite set of points $\x_1, \cdots , \x_k \in \mathcal{D}$, as $n \to \infty$, we have $(\pi^{{\x^n_1}}_n , \cdots , \pi^{{\x^n_k}}_n )$ converges
in distribution to $(W^{\x_1}, W^{\x_2}, \cdots ,W^{\x_k})$, where $(W^{\x_1}, W^{\x_2}, \cdots ,W^{\x_k})$ denotes coalescing Brownian motions starting from points $\x_1, \cdots, \x_k$.
    \item[($B_1$)] For all $t > 0$, $ \limsup_{n\to \infty} \sup_{(a,t_0) \in \R^2} 
    \P( \eta_{\Xi_n}(t_0, t; a, a + \epsilon ) \geq 2) \to 0$ as $\epsilon \downarrow 0$.
    \item[($B_2$)] For all $t > 0$, $\frac{1}{\epsilon} \limsup_{n\to \infty} \sup_{(a,t_0) \in \R^2} \P( 
    \eta_{\Xi_n}(t_0, t; a, a + \epsilon ) \geq 3) \to 0$ as $\epsilon \downarrow 0$.
\end{itemize}
Then as a sequence of $(\mathcal{H}, \mathcal{B}_{\mathcal{H}})$ valued random variables, 
$\{ \Xi_n : n \in \N \}$ converges in distribution to the Brownian web $\WW$. 
\end{theorem}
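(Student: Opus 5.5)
This is Theorem 2.2 of \cite{FINR04}, which the paper quotes without proof. The plan I would follow to prove it has three steps: tightness, a lower bound for subsequential limits, and an upper bound obtained from the counting variables $\eta$.

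\textbf{Tightness.} I would first show that $\{\Xi_n\}$ is tight in $(\mathcal{H}, \mathcal{B}_{\mathcal{H}})$. By the compactness criterion of \cite{FINR04}, it suffices to control, uniformly in $n$, the probability that some path of $\Xi_n$ starting in a space-time box makes a large spatial excursion within a short time. For non-crossing paths, such an excursion by an arbitrary path forces a comparable excursion by one of finitely many paths started from nearby points of $\mathcal{D}$: the arbitrary path is sandwiched between two such paths. Condition $(I_1)$ gives convergence of those finitely many paths to Brownian motions, whose modulus of continuity is controlled. This yields the tightness estimate, so subsequential limits $\mathcal{Z}$ exist.

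\textbf{Lower bound.} Fix a subsequential limit $\mathcal{Z}$. By $(I_1)$ and Skorohod coupling, $\mathcal{Z}$ almost surely contains paths $W^{\x}$, $\x\in\mathcal{D}$, which are coalescing Brownian motions. Since $\mathcal{Z}$ is closed, Theorem \ref{theorem:Bwebcharacterisation}(c) gives $\mathcal{Z}\supseteq \WW$. The non-crossing property also passes to the limit.

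\textbf{Upper bound.} To prove $\mathcal{Z}\subseteq\WW$, I would use the known Brownian web identity
\[
\E\,\eta_{\WW}(t_0,t;a,b) = 1 + \frac{b-a}{\sqrt{\pi t}}
\]
and aim to show $\E\,\eta_{\mathcal{Z}}(t_0,t;a,b)\le 1 + (b-a)/\sqrt{\pi t}$ for rational $t_0,t,a,b$. Combined with $\eta_{\mathcal{Z}}\ge\eta_{\WW}$, this forces equality of the counts almost surely. A separate lemma from \cite{FINR04} shows that equality for all rational parameters implies $\mathcal{Z}=\WW$, because any extra path would create an extra crossing position somewhere.

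To bound $\E\,\eta_{\mathcal{Z}}$, I would first pass to $\Xi_n$: by lower semicontinuity of $\eta$ under Hausdorff convergence (after a slight enlargement of the interval), $\E\,\eta_{\mathcal{Z}}\le\liminf_n \E\,\eta_{\Xi_n}$. I would then partition $[a,b]$ into $M$ intervals $I_j$ of length $\epsilon=(b-a)/M$, with endpoints in $\mathcal{D}$. By non-crossing, paths of $\Xi_n$ through $I_j$ at time $t_0$ are sandwiched between the paths started from the two endpoints of $I_j$. The number of distinct positions at time $t_0+t$ is then at most the number of distinct positions of the $M+1$ endpoint paths, plus the number of indices $j$ with $\eta_{\Xi_n}(t_0,t;I_j)\ge 3$, times a factor.

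By $(I_1)$, the first term converges in expectation to the coalescing-Brownian-motion count. As $\epsilon\to0$, that count tends to $1+(b-a)/\sqrt{\pi t}$. By $(B_2)$, the second term has expectation at most $M\cdot o(\epsilon)=o(1)$. Condition $(B_1)$ is used to rule out a single interval contributing interior positions distinct from both boundary positions with non-negligible probability. It also guarantees that the endpoint paths do not miss mass that escapes to interior positions.

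\textbf{Main obstacle.} I expect the delicate step to be this combinatorial bookkeeping. Interior paths of $I_j$ may occupy positions at time $t_0+t$ different from both boundary paths, and the number of such intervals must be controlled in $L^1$, not just in probability. The difficulty is uniform integrability of $\eta_{\Xi_n}$. I would first try to obtain it from $(B_2)$ by bounding $\eta$ by a sum of indicators over the $M$ subintervals.
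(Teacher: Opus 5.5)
The paper only quotes this result from \cite{FINR04} and gives no proof, so I am judging your plan on its own terms. Your tightness step and the inclusion $\WW\subseteq\mathcal{Z}$ are the standard ones and are fine. The gap is in the upper bound.

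\textbf{Why the expectation route fails.} You cannot reach $\E\,\eta_{\mathcal{Z}}\le 1+(b-a)/\sqrt{\pi t}$ through $\liminf_n\E\,\eta_{\Xi_n}$. The hypotheses $(I_1)$, $(B_1)$, $(B_2)$ give no control on prelimit expectations, since $(B_2)$ bounds only probabilities. For example, with probability $p_n=1/n$ replace $\Xi_n$ by the non-crossing family of all vertical lines $\{x=k/n^2\}$, $k\in\Z$. Since $p_n\to0$ and the $\limsup_n$ in $(B_1)$, $(B_2)$ is taken at fixed $\epsilon$, all hypotheses still hold. Yet $\E\,\eta_{\Xi_n}(t_0,t;a,b)\ge p_n\bigl((b-a)n^2-1\bigr)\to\infty$. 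So the uniform integrability you hope to extract from $(B_2)$ does not exist.

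There is a second, independent problem. You claim that $(I_1)$ makes the expected number of distinct positions of the $M+1$ endpoint paths converge. That count is only lower semicontinuous under locally uniform convergence, because prelimit paths can be close without having coalesced. So $(I_1)$ gives the inequality in the direction opposite to the one you need. Repairing this in the prelimit with $(B_1)$ is awkward: $(B_1)$ concerns fixed intervals, while the place where two paths come together is random.

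\textbf{The missing idea.} Make the comparison on the limit, pathwise and in probability; this is what the $o(\epsilon)$ in $(B_2)$ is designed for.
\begin{enumerate}
\item Couple so that $\WW\subseteq\mathcal{Z}$ almost surely.
\item Transfer $(B_1)$ and $(B_2)$ to $\mathcal{Z}$ as probability bounds. This is a Fatou/portmanteau argument, using lower semicontinuity of $\eta$ after moving $t_0$ slightly forward (shortening $t$ accordingly) and enlarging the interval slightly.
\item Fix deterministic points $a=y_0<\dots<y_M=b$ with mesh $\epsilon$. By $(B_1)$ for $\mathcal{Z}$, almost surely every path of $\mathcal{Z}$ started at or before $t_0$ and passing through $(y_j,t_0)$ sits at time $t_0+t$ where the $\WW$-path from $(y_j,t_0)$ sits. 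This, not control of interior paths, is the real role of $(B_1)$.
\item By $(B_2)$ and a union bound, $\P\bigl(\exists j:\eta_{\mathcal{Z}}(t_0,t;y_j,y_{j+1})\ge 3\bigr)\le M\,o(\epsilon)\to0$.
\item On the complementary event, the non-crossing sandwich forces every position at time $t_0+t$ to be the position of one of these $M+1$ paths of $\WW$. Hence $\eta_{\mathcal{Z}}(t_0,t;a,b)\le\eta_{\WW}(t_0,t;a,b)$ almost surely.
\item Together with $\eta_{\mathcal{Z}}\ge\eta_{\WW}$ this gives equality for all rational parameters, and the characterization lemma you mention finishes the proof.
\end{enumerate}
No expectations, no formula $1+(b-a)/\sqrt{\pi t}$ and no integrability are needed. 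Bounding $\E\,\eta$ through the prelimit requires model-specific density estimates, which is why later criteria impose such an expectation bound as a separate hypothesis on subsequential limits.
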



We note that for the Howard network, the paths are a.s. non-crossing. 
Next we state an alternate set of conditions for convergence to the Brownian web derived in \cite{SSS19}. This set of conditions is applicable only for non-crossing path models. 
We need to introduce the dual Brownian web $\widehat{\WW}$.
As in the case of forward paths, one can consider a similar metric space of collection of backward paths 
denoted by $(\widehat{\Pi}, d_{\widehat{\Pi}})$. 
Let $(\widehat{{\mathcal H}}, d_{\widehat{{\mathcal H}}})$
denote the corresponding Polish space of compact collections of backward paths with the induced Hausdorff metric. The Brownian web and its dual, denoted 
by $({\mathcal W},\widehat{{\mathcal W}})$, is a $({\mathcal H}\times \widehat{{\mathcal H}}, {\mathcal B}_{{\mathcal H}\times \widehat{{\mathcal H}}})$-valued random variable, which is referred as the double Brownian web, satisfying the following:
\begin{itemize}
\item[$(i)$] The dual  Brownian web $\widehat{{\mathcal W}}$ is distributed as the Brownian web  rotated $180^0$ about the origin;
\item[$(ii)$] ${\mathcal W}$ and $\widehat{{\mathcal W}}$ a.s. 
uniquely determine each other
in the sense that the paths of ${\cal W}$ a.s.~do not cross with (backward) paths in 
$\widehat{{\cal W}}$ and it has been observed in \cite{SSS19} that, $\WW$, which 
contains a collection of coalescing non-crossing Brownian paths starting from a dense set, 
almost surely uniquely determines the backward collection of paths.    
\item[$(iii)$] The interaction between the paths in ${\mathcal W}$ and $\widehat{\mathcal W}$ is that of Skorohod reflection (see \cite{STW00}).
\end{itemize}
Using $({\mathcal W},\widehat{{\mathcal W}})$, Sun and Swart in \cite{SS08}
 provided a `wedge' characterization of $\WW$.
Consider $\widehat{\pi_1}, \widehat{\pi_2} \in \widehat{\WW}$ with $s = \sigma_{\widehat{\pi_1}} \wedge \sigma_{\widehat{\pi_2}}$ 
and $\widehat{\pi_1}(s) < \widehat{\pi_2}(s)$. 
Let $T$ denote the first intersection time of  
$\widehat{\pi}_1, \widehat{\pi}_2$ defined as
$T := \inf\{ t < s : \widehat{\pi}_1(t) = \widehat{\pi}_1(t)\}$. 
The wedge between $\widehat{\pi}_1, \widehat{\pi}_2$
is denoted by $W(\widehat{\pi}_1, \widehat{\pi}_1)$ and defined as the open set
$$
W(\widehat{\pi}_1, \widehat{\pi}_2) := \{ (u,t) : T < t < s , 
\widehat{\pi}_1(t) < u < \widehat{\pi}_2(t) \}~.
$$
We say that a forward path $\pi \in \Pi$, enters a wedge 
$W(\widehat{\pi}_1, \widehat{\pi}_2)$
from outside if there  exist $\sigma_\pi < t_1 < t_2$ such that   
$$
\pi(t_1) \notin \overline{W}(\widehat{\pi}_1, \widehat{\pi}_2) \text{ and }
\pi(t_2) \in W(\widehat{\pi}_1, \widehat{\pi}_2),
$$
where $\overline{W}(\widehat{\pi}_1, \widehat{\pi}_2)$ denotes closure of 
$W(\widehat{\pi}_1, \widehat{\pi}_2)$.
The following `wedge' characterisation theorem of $\WW$ is taken from Theorem 1.9 of \cite{SS08}.

\begin{theorem}[Theorem 1.9 of \cite{SS08}]
\label{thm:BwebWedge}
For $({\mathcal W},\widehat{{\mathcal W}})$ almost surely we have 
\begin{align*}
\WW := \{\pi \in \Pi : \pi \text{ does not enter any wedge in }\widehat{\WW}\text{ from 
outside}\}.
\end{align*}
\end{theorem}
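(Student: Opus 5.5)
The statement is an equality of sets, so I would prove the two inclusions separately, working on the almost sure event where the properties $(i)$--$(iii)$ of the double Brownian web $(\WW,\widehat{\WW})$ hold. In particular, I use that paths of $\WW$ do not cross paths of $\widehat{\WW}$, and that $\WW$ is the closure of the paths started from a countable dense set $\mathcal D$ (Theorem \ref{theorem:Bwebcharacterisation}(c)).

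\emph{Inclusion $\subseteq$.} Let $\pi\in\WW$ and let $\widehat\pi_1,\widehat\pi_2\in\widehat{\WW}$ be dual paths with $\widehat\pi_1(s)<\widehat\pi_2(s)$, first meeting time $T$ and wedge $W=W(\widehat\pi_1,\widehat\pi_2)$. Suppose $\pi(t_1)\notin\overline W$ and $\pi(t_2)\in W$ with $\sigma_\pi<t_1<t_2$. The set $\overline W\cap(\R\times[t_1,t_2])$ is bounded by pieces of the graphs of $\widehat\pi_1,\widehat\pi_2$ and possibly the tip $(\widehat\pi_1(T),T)$. By continuity, $\pi$ must pass from outside to the open interior either by strictly crossing one of the two boundary curves, or by passing through the tip, entering $W$ immediately after time $T$.

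\begin{itemize}
\item The first alternative contradicts non-crossing of $\WW$ with $\widehat{\WW}$.
\item For the second alternative, I would approximate $\widehat\pi_1,\widehat\pi_2$ by dual paths $\widehat\pi_1',\widehat\pi_2'$ started from points of a dense set strictly inside $W$ near time $s$. These coalesce with $\widehat\pi_1,\widehat\pi_2$ respectively before time $T+\eta$, but meet each other only near $T$. A forward path threading the tip would then have to cross one of $\widehat\pi_1',\widehat\pi_2'$ or $\widehat\pi_1,\widehat\pi_2$ strictly, which again contradicts non-crossing.
\end{itemize}

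The delicate point here is the tip. One needs the known fact that points where two dual paths coalesce are a.s. not entered ``between'' the dual paths by a forward path. I would derive this from non-crossing applied to approximating dual paths, as sketched above.

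\emph{Inclusion $\supseteq$.} Let $\pi\in\Pi$ enter no wedge from outside, and write $\z=(\pi(\sigma_\pi),\sigma_\pi)$. Fix $\delta>0$ and a finite grid of times $\sigma_\pi<u_1<\dots<u_m$. For each $u_j$, consider dual paths $\widehat\pi^{j,-},\widehat\pi^{j,+}$ started from rational points at time $u_j$ within $\delta$ to the left and right of $\pi(u_j)$. If they met at a time $T>\sigma_\pi$, then $\pi(u_j)$ would lie in their wedge at time $u_j$. Moreover $\pi$ must lie outside the closed wedge at some time in $(\sigma_\pi,T)$, since the wedge does not exist below $T$. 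By an approximation argument with a slightly perturbed pair to exclude the boundary case, this would force $\pi$ to enter the wedge from outside, which is excluded. Hence for every $j$ the pair $\widehat\pi^{j,\pm}$ stays separated down to time $\sigma_\pi$ (up to $\delta$-perturbation).

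Next I would take a forward path $\pi_\delta\in\WW$ started at a point of $\mathcal D$ near $\z$, between all the dual pairs at time $\sigma_\pi$. By non-crossing, $\pi_\delta$ is trapped between $\widehat\pi^{j,-}$ and $\widehat\pi^{j,+}$ at time $u_j$, so $|\pi_\delta(u_j)-\pi(u_j)|\le\delta$ for all $j$. The same reasoning applied to dual paths started at intermediate times, together with the continuity of $\pi$, should upgrade this to uniform closeness on compacts. Letting the grid refine and $\delta\downarrow0$ gives $\pi_\delta\to\pi$ in $d_\Pi$. Closedness of $\WW$ then yields $\pi\in\WW$.

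\emph{Main obstacle.} I expect the hardest part to be ensuring that a \emph{single} forward path $\pi_\delta$ is simultaneously trapped by all the dual pairs, and upgrading grid-time control to uniform control, especially near the starting point $\sigma_\pi$, where the wedges degenerate. I would try to handle this by choosing the dual starting points from a common dense set and using the coalescence of dual paths to reduce to finitely many distinct dual curves on each compact time window.
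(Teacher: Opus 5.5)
The paper only quotes this result from Sun--Swart and gives no proof, so I am judging your argument on its own. There is a genuine gap in $\subseteq$ at the tip, and the repair you propose cannot work. Let $\widehat\rho$ denote the common continuation of $\widehat\pi_1,\widehat\pi_2$ below $T$.

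If $\pi(t_1)\neq\widehat\rho(t_1)$ for some $t_1\in(\sigma_\pi,T)$, then $\pi$ is strictly on one side of $\widehat\pi_1$ or $\widehat\pi_2$ at $t_1$ and strictly on the other side at $t_2$. Non-crossing then already gives the contradiction, and no approximation is needed. The only case left is $\sigma_\pi<T$ and $\pi=\widehat\rho$ on $(\sigma_\pi,T]$, followed by an excursion into $W$. This configuration is invisible to non-crossing.

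Your $\widehat\pi_1',\widehat\pi_2'$ coalesce with $\widehat\pi_1,\widehat\pi_2$ above $T$, so they meet at the same tip and share the same trunk $\widehat\rho$. A path that runs along $\widehat\rho$ up to $T$ and then stays between $\widehat\pi_1$ and $\widehat\pi_1'$ has sign $0$ against all four curves below $T$ and a weakly constant sign above $T$. It therefore crosses none of them. More generally, if the excursion follows the middle outgoing path at the tip, the concatenated path crosses no path of $\widehat{\WW}$ at all. This is exactly why the theorem is phrased with wedges rather than with non-crossing.

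So the tip case needs a genuinely probabilistic property of $\WW$. Either of the following suffices:
\begin{itemize}
\item almost surely no path of $\WW$ coincides with a path of $\widehat{\WW}$ on a time interval of positive length (forward and dual paths spend zero Lebesgue time together, inherited from their Skorohod-reflection interaction);
\item the special-point classification, under which the meeting point of two dual paths is a $(0,3)$ point of $\WW$ and so has no incoming forward path.
\end{itemize}

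In $\supseteq$, your first step is correct: a dual pair around $\pi(u_j)$ cannot meet above $\sigma_\pi$. The step you flag as the main obstacle, however, is a real gap. The hypothesis only places $\pi$ in the \emph{closed} wedge of each pair, so $\pi$ may run along boundary curves. Suppose $\widehat\pi^{j,-}$ and $\widehat\pi^{k,+}$ have both merged into a dual path along which $\pi$ travels at a time $\sigma'$ near $\sigma_\pi$. Then the two open intervals at time $\sigma'$ are disjoint, and no path from $\mathcal D$ can be placed between all pairs. Ruling this out requires producing a further wedge that $\pi$ enters, and your plan does not supply one.

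You can avoid trapping altogether by using the image set. For $\sigma_\pi<q<t$ let $\xi(t):=\{\pi'(t):\pi'\in\WW,\ \sigma_\pi\le\sigma_{\pi'}\le q\}$, which is a.s.\ locally finite.
\begin{itemize}
\item Suppose $\pi(t)\notin\xi(t)$. Choose $y<\pi(t)<y'$ with $[y,y']\cap\xi(t)=\emptyset$.
\item The dual paths from $(y,t)$ and $(y',t)$ must meet at some time $T'\ge q$. Otherwise a forward path started at time $q$ strictly between them would be trapped and would give a point of $\xi(t)$ in $[y,y']$.
\item Since $T'\ge q>\sigma_\pi$, the path $\pi$ then enters their wedge from outside, contradicting the hypothesis. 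Hence $\pi(t)\in\xi(t)$ for all $t>q$.
\item Paths of $\WW$ coalesce once they meet after both have started. So for each $q'>q$, $\pi$ coincides on $[q',\infty)$ with a path of $\WW$ started in $[\sigma_\pi,q]$.
\item Let $q,q'\downarrow\sigma_\pi$. Equicontinuity and closedness of the compact set $\WW$ then give $\pi\in\WW$.
\end{itemize}
The same equicontinuity makes your grid-to-uniform upgrade automatic.
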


Using this `wedge' characterisation of $\WW$, in \cite{SSS19} Sun et. al. developed Theorem \ref{thm:ConvBweb_Noncrossing} for convergence to $\WW$, which is applicable for non-crossing path models only. 
Before stating Theorem \ref{thm:ConvBweb_Noncrossing} we make the following remark. 

\begin{remark}
\label{rem:Dual_Noncrossing}
We observe that for a sequence of $( \mathcal{H}, \mathcal{B}_{\mathcal{H}})$-valued random variables 
$\{ \Xi_n : n \geq 1\}$ with non-crossing paths satisfying condition $I_1$ (see Theorem \ref{thm:BwebConvFINR}), the sequence $\{ \Xi_n : n \geq 1\}$ is tight and any sub-sequential limit 
$\mathcal{Z}$ of  $\{ \Xi_n : n \geq 1\}$ contains a copy of $\WW$. 
Further, as observed in \cite{SSS19}, 
there exists a dual sequence $\{ \widehat{\Xi}_n : n \geq 1\}$
of non-crossing backward paths such that the following conditions hold:
\begin{itemize}
\item[(i)] Any backward/dual path $\widehat{\pi} \in \widehat{\Xi}_n$ cannot cross a
forward path in $\Xi_n$.
\item[(ii)] As $n \to \infty$, the set of starting points of dual paths in $\widehat{\Xi}_n$, given by $\{(\widehat{\pi}(\sigma_{\widehat{\pi}}),\sigma_{\widehat{\pi}}) : \pi \in \widehat{\Xi}_n\}$, becomes 
dense in $\R^2$.
\item[(iii)] The sequence $\{  (\Xi_n, \widehat{\Xi}_n) : n \geq 1 \}$ is jointly tight and any 
sub-sequential limit $(\mathcal{Z},\widehat{\mathcal{Z}})$ contains a copy of the 
double Brownian web $(\WW, \widehat{\WW})$.   
\end{itemize} 
\end{remark}
For $K \subset \Pi, \widehat{K} \subset \widehat{\Pi}$ and $A\subset \R^2$ let 
\begin{align}
\label{def:PathsStartingPoints}
K^A  & := \{ \pi \in K : (\pi(\sigma_\pi),\sigma_\pi) \in A\} \text{ and } \nonumber \\
\widehat{K}^A & := \{ \widehat{\pi} \in \widehat{K} : 
(\widehat{\pi}(\sigma_{\widehat{\pi}}),\sigma_{\widehat{\pi}}) \in A\}
\end{align}
denote the collection of paths in $K$ and $\widehat{K}$ respectively 
with starting points in $A$.
We state Theorem \ref{thm:ConvBweb_Noncrossing} from \cite{SSS19} below
which would be useful for us.
\begin{theorem}[Theorem 6.6 of \cite{SSS19}]
\label{thm:ConvBweb_Noncrossing}
For a sequence of $( \mathcal{H}, \mathcal{B}_{\mathcal{H}})$-valued random variables 
$\{ \Xi_n : n \geq 1\}$ with non-crossing paths satisfying condition $I_1$, we consider 
a dual sequence $\{ \widehat{\Xi}_n : n \geq 1\}$ satisfying properties as mentioned 
in Remark \ref{rem:Dual_Noncrossing}. For any sub-sequential limit 
$(\mathcal{Z},\widehat{\mathcal{Z}})$ of $\{ (\Xi_n, \widehat{\Xi}_n) : n \geq 1\}$
we also have that no path in $\mathcal{Z}^{\Q^2}$ (defined as in (\ref{def:PathsStartingPoints})) enters any 
wedge in $\widehat{\mathcal{Z}}^{\Q^2}$ from {\it outside}
 and vice versa, i.e., no path in $\widehat{\mathcal{Z}}^{\Q^2}$ enters any 
wedge in $\mathcal{Z}^{\Q^2}$ from {\it outside}. 
Then $(\mathcal{Z}, \widehat{\mathcal{Z}})$ has the same distribution as the double Brownian web $(\WW, \widehat{\WW})$.
\end{theorem}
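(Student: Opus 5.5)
The plan is to identify any subsequential limit $(\mathcal{Z},\widehat{\mathcal{Z}})$ with the copy of the double Brownian web it contains, by showing the reverse inclusions through the wedge characterization of Theorem \ref{thm:BwebWedge}. First, by Remark \ref{rem:Dual_Noncrossing}(iii) and condition $I_1$, pass to a coupling (Skorohod representation) in which $(\Xi_n,\widehat{\Xi}_n)\to(\mathcal{Z},\widehat{\mathcal{Z}})$ almost surely and $\WW\subset\mathcal{Z}$, $\widehat{\WW}\subset\widehat{\mathcal{Z}}$, where $(\WW,\widehat{\WW})$ is a double Brownian web. It then suffices to prove $\mathcal{Z}\subset\WW$ and $\widehat{\mathcal{Z}}\subset\widehat{\WW}$ almost surely; the two are symmetric under $180^0$ rotation, so I treat only the forward one.

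\emph{Step 1: non-crossing passes to the limit.} Since forward paths in $\Xi_n$ never cross dual paths in $\widehat{\Xi}_n$ (Remark \ref{rem:Dual_Noncrossing}(i)), and strict crossing is an open condition in $(\Pi,d_\Pi)\times(\widehat{\Pi},d_{\widehat{\Pi}})$, no path of $\mathcal{Z}$ crosses a path of $\widehat{\mathcal{Z}}$ strictly. Similarly, paths within $\mathcal{Z}$ are non-crossing. Consequently, a path $\pi\in\mathcal{Z}$ can enter a wedge $W(\widehat{\pi}_1,\widehat{\pi}_2)$ with $\widehat{\pi}_i\in\widehat{\mathcal{Z}}$ from outside only through the bottom point $(\widehat{\pi}_1(T),T)$. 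This is exactly the degenerate scenario that non-crossing alone cannot exclude, and it is why the wedge hypothesis is needed.

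\emph{Step 2: reduce to countable skeletons.} By Theorem \ref{theorem:Bwebcharacterisation}(a),(b), almost surely for each $\x\in\Q^2$ there is a unique $\WW$-path from $\x$. Using $I_1$ and non-crossing sandwiching, I would also show that every path of $\mathcal{Z}$ started at $\x\in\Q^2$ equals $\pi^{\x}\in\WW$. Discrete paths started near $\x^n$ are trapped between paths from slightly left and right rational points, which coalesce quickly with $\pi^{\x}$. This argument is less clean than it sounds, and the hypothesis is stated for $\mathcal{Z}^{\Q^2}$ partly to avoid it. Next, the wedge characterization of $\WW$ holds already with wedges of $\widehat{\WW}^{\Q^2}$, since $\widehat{\WW}$ is the closure of $\widehat{\WW}^{\Q^2}$ and every wedge of $\widehat{\WW}$ is exhausted by wedges from rational dual starting points. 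Those dual paths lie in $\widehat{\mathcal{Z}}^{\Q^2}$.

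\emph{Step 3: the main obstacle, general paths of $\mathcal{Z}$.} Take $\pi\in\mathcal{Z}$ and suppose it enters a wedge $W(\widehat{\pi}_1,\widehat{\pi}_2)$ with $\widehat{\pi}_i\in\widehat{\WW}^{\Q^2}$. By Step 1 it must pass through the bottom point and then lie strictly inside at some later time $t_2$. My first attempt is to choose a rational point $\z$ strictly inside the wedge, slightly after $T$, with $\z$ lying between $\pi$ and one boundary path, say $\widehat{\pi}_1$, at that time. I would then use non-crossing between $\pi$ and the $\mathcal{Z}^{\Q^2}$-paths from rational points just before $T$ near the bottom point. Such paths are squeezed against $\pi$, so one of them is forced to enter the wedge from outside as well, contradicting the hypothesis. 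If this squeezing is delicate, a fallback is to shrink the wedge: replace $\widehat{\pi}_1,\widehat{\pi}_2$ by dual paths from rational points at a time $s'\in(t_2,s)$ that bracket $\pi(s')$ tightly. Their meeting time $T'$ is close to but different from the bottom point at which $\pi$ sits. Then either $\pi$ crosses them strictly, contradicting Step 1, or a rational forward path between them does so.

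Once no $\pi\in\mathcal{Z}$ enters any wedge of $\widehat{\WW}$ from outside, Theorem \ref{thm:BwebWedge} gives $\mathcal{Z}\subset\WW$, hence $\mathcal{Z}=\WW$. Likewise $\widehat{\mathcal{Z}}=\widehat{\WW}$. Therefore $(\mathcal{Z},\widehat{\mathcal{Z}})$ is distributed as the double Brownian web.
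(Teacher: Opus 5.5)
\paragraph{A structural gap in Step~3.} The paper gives no proof of this statement; it is quoted from \cite{SSS19}. So I judge your plan against what the statement needs. There is a genuine gap in Step~3, and it is structural rather than technical.

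Your Step~2 sandwich argument is in fact clean. Suppose $\pi\in\mathcal{Z}$ starts at a deterministic $\x=(x,t)$. Non-crossing inside $\mathcal{Z}$ gives $\pi^{(x-\delta,t)}\le\pi\le\pi^{(x+\delta,t)}$ on $[t,\infty)$. Both bounds converge to $\pi^{\x}$ as $\delta\downarrow 0$, by compactness of $\WW$ and a.s.\ uniqueness of the path from $\x$. Hence $\mathcal{Z}^{\Q^2}=\WW^{\Q^2}$ a.s., and by the same argument $\widehat{\mathcal{Z}}^{\Q^2}=\widehat{\WW}^{\Q^2}$.

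But then the wedge hypothesis, read as you read it, only says that paths of $\WW^{\Q^2}$ do not enter wedges of $\widehat{\WW}^{\Q^2}$ from outside, and vice versa. That is automatically true by Theorem~\ref{thm:BwebWedge}, so it carries no information. Step~3 would therefore have to derive $\mathcal{Z}\subset\WW$ from non-crossing, $I_1$ and the existence of the dual alone. That is exactly what the extra condition in such criteria is there to supply; the wedge condition plays the role of $(B_2)$ in Theorem~\ref{thm:BwebConvFINR}.

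\paragraph{Why both mechanisms fail.} Suppose $\pi$ enters $W(\widehat{\pi}_1,\widehat{\pi}_2)$ through the bottom point, and $\pi(s')\in W$ for a rational $s'\in(T,s)$.
\begin{itemize}
\item \emph{Fallback (shrinking the wedge).} Dual paths from rational points flanking $\pi(s')$ are sandwiched between $\widehat{\pi}_1$ and $\widehat{\pi}_2$. So they meet at some $T'\ge T$ and follow the stem below $T$. Non-crossing then forces $\pi$ to coincide with their common path on $[\sigma_\pi,T']$. Thus $\pi$ enters the smaller wedge through its bottom point as well. This is the same degenerate configuration at another scale, with no strict crossing anywhere, so no contradiction results.
\item \emph{Squeezing.} Rational forward paths started before $T$ near the bottom point are $\WW$-paths. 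They pass left of $\widehat{\pi}_1$ or right of $\widehat{\pi}_2$. This is fully compatible with not crossing $\pi$, so none of them is forced into the wedge.
\end{itemize}
Nothing in your hypotheses excludes such a $\pi$: it starts on a dual path, runs along it, and slips into the wedge at $T$. Such a path can arise in the limit of a non-crossing discrete system in which two dual paths travel at microscopic distance for a macroscopic time before coalescing, with forward paths trapped between them.

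\paragraph{What is missing.} The missing ingredient is the wedge condition for \emph{all} paths of $\mathcal{Z}$ (and, for the converse, of $\widehat{\mathcal{Z}}$). Compare condition (U) for the net above, where the forward paths range over all of $\mathcal{Z}_d$. With that hypothesis, your Steps~1--2 together with the reduction to rational wedges and Theorem~\ref{thm:BwebWedge} already give $\mathcal{Z}\subset\WW$. Symmetrically they give $\widehat{\mathcal{Z}}\subset\widehat{\WW}$, and Step~3 becomes unnecessary.
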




\label{subsec:Bweb_Dual}

\subsection{Brownian net}
\label{subsec:BrownianNet}

In this section we describe the Brownian net which generalizes 
the Brownian web by allowing paths to branch. 
The Brownian net was introduced by Sun and Swart in \cite{SS08}.
Brownian net  with drift parameter $d \in (0, \infty)$ is denoted by $\NN_d$ and described below. In $\NN_d$ as a result of branching, 
for any deterministic $(x,t) \in \R^2$ there are multitudes of 
paths starting from $(x,t)$. Among the set of paths starting from $(x,t)$ in $\NN_d$, 
the rightmost $r^{(x,t)}$ (resp. the leftmost $l^{(x,t)}$) path starting from $(x,t)$ is a.s. uniquely defined. 
The collection of the leftmost and the rightmost paths in the Brownian net 
gives rise to what is called the left-right Brownian web. 
We first describe the left-right Brownian Web. 

For a collection of space-time points $\z_1, \cdots , \z_k, 
\z^\prime_{1},  \cdots , \z^\prime_{k^\prime}$ in $\R^2$, the joint distribution of leftmost and rightmost paths 
$\bigl ( l^{\z_1}, \cdots , l^{\z_k}, r^{\z^\prime_{1}}, 
\cdots , r^{\z^\prime_{k^\prime}} \bigr )$ with drift $d \in (0, \infty)$ 
is given as follows:
\begin{itemize}
\item[(a)] The collection $\bigl ( l^{\z_1}, \cdots , l^{\z_k} \bigr )$
is distributed as independent coalescing Brownian motions starting from 
$\z_1, \cdots , \z_{k}$ with drift $-d$. Similarly, 
the collection $\bigl ( r^{\z^\prime_{1}}, \cdots , 
r^{\z^\prime_{k^\prime}} \bigr )$ is distributed as  
coalescing Brownian motions starting from 
$\z^\prime_{1}, \cdots , \z^\prime_{k^\prime}$ with drift $+d$. 

\item[(b)] All these paths evolve independently when they are disjoint. 

\item[(c)] When a pair of left path and right path intersect, they follow a `sticky' 
interaction. Because of Item (b) together with the strong Markov property, 
it suffices to characterize the interaction between
one pair of leftmost and rightmost paths staring from the same point. W.l.o.g. 
we take $(L_t, R_t)_{t \geq 0} = (l^{(0,0)}(t), r^{(0.0)}(t))$ and $(L_t, R_t)_{t \geq 0}$ 
solves the following system of ODE's:
\begin{align}
\label{eq:LR_SDE}
d L_t & = \mathbf{1}_{\{ L_t \neq R_t\}} d B^l_t + \mathbf{1}_{\{ L_t = R_t\}} d B^c_t - d dt ,\nonumber\\
d R_t & = \mathbf{1}_{\{ L_t \neq R_t\}} d B^r_t + \mathbf{1}_{\{ L_t = R_t\}} d B^c_t + d dt , 
\end{align}    
where $B^l,B^r, B^c$ are independent standard Brownian motions, and $L_t$ and $R_t$ are
subject to the constraint that $L_t \leq R_t$ for all $t  \geq 0$.
$(L_t, R_t)_{t \geq 0}$ represents left-right Brownian motions 
starting from origin with drift $d$.

As observed in \cite{SS08}, (\ref{eq:LR_SDE}) can be rewritten into a different 
sets of equations shown below, which has a pathwise
unique solution.
\begin{align}
\label{eq:LR_SDE_new}
 d L_t & =  d B^l_{I_t} +  d B^c_{C_t} - d dt ,\nonumber\\
 d R_t & =  d B^r_{I_t} + d B^c_{C_t} + d dt ,\nonumber\\
 t & = I_t + C_t , \nonumber\\
 0 & = \int_0^t \mathbf{1}_{\{ L_t <  R_t\}} dC_t, \nonumber\\
 C_t&= \int_0^t \mathbf{1}_{\{ L_s = R_s\}} ds
\end{align}

where $B_l$, $B_r$ and $B_c$ are independent standard one 
dimensional Brownian motions.
It has been proved in \cite{SS08} that there is a one-to-one correspondence
in law between weak solution of (\ref{eq:LR_SDE}) and weak solution of 
(\ref{eq:LR_SDE_new}).

\item[(d)] A straightforward extension of (\ref{eq:LR_SDE}) is sufficient to specify the joint distribution of any finite collection of left-right paths, known as left-right coalescing Brownian motions. However, that is a rather laborious route. Instead, we will 
follow Section 2.2 of \cite{SS08} to present a rigorous construction of left-right 
coalescing Brownian paths starting from $\z_i = (x_i, t_i), \z^\prime_i = 
(x^\prime_i, t^\prime_i) \in \R^2$  for $1 \le i \le k$ and $ 1 \le i \le k^\prime$. 
The times $t_1, \cdots , t_k$, $t^\prime_1, \cdots , t^\prime_{k^\prime}$ 
divide $\R$ into a finite number of intervals and it suffices to specify a Markov process that describes the time evolution of the left-right coalescing Brownian 
motions during each such interval. As explained, the goal is to construct a 
Markov process 
$(L_{1,t}, \cdots , L_{k,t} ; R_{1,t} , \cdots ,R_{k^\prime,t} )_{t \geq 0}$
in $\R^{k + k^\prime}$ such that $(L_{1,t}, \cdots , L_{k,t})_{t \geq 0}$ and 
$(R_{1,t}, \cdots , R_{k^\prime,t})_{t \geq 0}$ are each distributed as coalescing
Brownian motions with drift $- d$ and $ + d$ respectively, and the interaction
between paths in $(L_1 , \cdots , L_k )$ and $(R_1 , \cdots , R_{k^\prime} )$ 
is that of the left-right SDE (as given in (\ref{eq:LR_SDE})). At the time $t := \max\{t_i : 1 \leq i \leq k \}\vee \max\{t_i : 1 \leq i \leq k^\prime\}$ we order the  paths from 
left to right w.r.t. their values at time $t$. W.l.o.g. we assume that our system is initially ordered as `$LRRLRLRLLLRRLR$', i.e., the first path follows a left 
Brownian motion whereas the second path follows a right Brownian motion. We 
partition this collection as $\{ LR \}\{ R \}\{ LR\}\{ LR \} \{L \}\{ L \}
\{ LR \}\{ R \}\{ LR\}$, letting all pairs of a left Brownian motion
followed by a right Brownian motion constitute a partition element with two
members, and putting all remaining Brownian motions into partition elements with
one member. We let the partition elements evolve independently,
where partition elements containing two members evolve
according to the left-right SDE (\ref{eq:LR_SDE}), till the first time 
\begin{itemize}
\item[(i)] either two left (right) Brownian motions intersect 
and coalesce,
\item[(ii)] or a right Brownian motion hits a left Brownian motion from the left. As the right Brownian motion has to continue to the right of the left Brownian motion,
this results in a crossing.
\end{itemize}
By construction, all the above random times (coalescing or crossing) are hitting times.
At the first coalescing or crossing time,  the Brownian motions, those have hit each
other, are respectively coalesced or crossed. Post that we repartition the system and continue the system till the next coalescing or crossing event. As observed in \cite{SS08}, 
we can have at most finitely many such iterations which eventually leads to a single pair
$(L,R)$ provided both $k , k^\prime \geq 1$. This construction uniquely defines 
the system of left-right coalescing Brownian motions 
$l^{\z_1}, \cdots , l^{\z_k}$, $r^{\z^\prime_1}, \cdots , r^{\z^\prime_{k^\prime}}$ 
and the consistency property allows us to define the system 
of left-right coalescing Brownian motions starting from a 
countable deterministic dense set in $\R^2$.
\end{itemize}

It is useful to observe that for $d = 0$, the ODE in Equation 
(\ref{eq:LR_SDE}) simply gives coalescing Brownian motions ({\it without any drift}).  
The above properties uniquely determine the joint law of  the collection
$$
\bigl ( l^{\z_1}, \cdots , l^{\z_k}, r^{\z^\prime_1}, 
\cdots , r^{\z^\prime_{k^\prime}} \bigr ),
$$
 which are called left-right coalescing Brownian motions starting from $\z_1, \cdots , \z_k
 , \z^\prime_1, \cdots , \z^\prime_{k^\prime}$ with drift parameter $d$.
The theorem (Theorem 1.5 of \cite{SS08}) below gives a characterization of the
left-right Brownian web with drift parameter $d > 0$, 
denoted as $(\WW^l_d, \WW^r_d)$.
\begin{theorem}[Theorem 1.5 of \cite{SS08}]
\label{thm:LRBweb}
There exists a $\mathcal{H}^2$-valued random variable $(\WW^l_d, \WW^r_d)$, the  
left-right Brownian web with drift parameter $d$, whose distribution 
is uniquely determined by the following properties:
\begin{itemize}
\item[(a)] Each deterministic $\z \in \R^2$, has unique path $l^\z \in \WW^l_d$ 
 and $r^\z \in \WW^r_d$, both starting from $\z$ a.s.
 \item[(b)] For any finite deterministic set of points 
 $\z_1, \cdots , \z_k, \z^\prime_1, \cdots , 
 \z^\prime_{k^\prime} \in \R^2$, the collection of paths 
 $$
 \bigl ( l^{\z_1}, \cdots , l^{\z_k}, r^{\z^\prime_1}, \cdots , 
 r^{\z^\prime_{k^\prime}} \bigr ) 
 $$ 
 is distributed as a collection of left-right coalescing Brownian motions with drift parameter $d$ (see (\ref{eq:LR_SDE})).
 \item[(c)] For any deterministic countable dense sets $\mathcal{D}^l,\mathcal{D}^r \subset \R^2$,
 we have 
 $$
 \WW^l_d = \overline{\{ l^\z : \z \in \mathcal{D}^l\}} \text{ and }
 \WW^r_d = \overline{\{ r^\z : \z \in \mathcal{D}^r\}} \text{ a.s.}
 $$
\end{itemize}
\end{theorem}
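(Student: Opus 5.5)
This statement is quoted from \cite{SS08}, so rather than proving it from scratch I would reconstruct the Sun--Swart construction. There are two parts: existence of a pair $(\WW^l_d,\WW^r_d)$ with properties (a)--(c), and uniqueness of its law.

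\textbf{Consistency.} Fix a deterministic countable dense set $\mathcal{D}\subset\R^2$. First I check that the finite-dimensional laws of left-right coalescing Brownian motions, described in item (d) above, form a consistent family. Two facts drive this. The marginal law of the left paths alone is that of coalescing Brownian motions with drift $-d$, and the right paths alone are coalescing with drift $+d$. Moreover, discarding one path from the partition-and-evolve construction does not change the law of the remaining ones. This follows from pathwise uniqueness of (\ref{eq:LR_SDE_new}) together with the strong Markov property at each coalescing or crossing hitting time. By Kolmogorov's extension theorem I then obtain a process $(l^{\z},r^{\z})_{\z\in\mathcal{D}}$ on a single probability space.

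\textbf{Compactness.} Next I show that the closures $\overline{\{l^{\z}:\z\in\mathcal{D}\}}$ and $\overline{\{r^{\z}:\z\in\mathcal{D}\}}$ are a.s.\ compact in $(\Pi,d_\Pi)$. Each family is a coalescing system of drifted Brownian motions, so I use the construction of Theorem \ref{theorem:Bwebcharacterisation}, i.e.\ \cite{FINR04}, applied after a shear $(x,t)\mapsto(x\pm dt,t)$ that removes the drift. The FINR modulus-of-continuity estimate then gives a uniform equicontinuity bound on compacts, and compactness follows.

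\textbf{Properties (a) and (b).} Each closure is a Brownian web with drift $\mp d$, which gives (a). Property (b) holds by construction for points of $\mathcal{D}$. For general deterministic points I approximate from within $\mathcal{D}$ and use continuity in law of the left-right SDE in its initial data. For (c) with an arbitrary dense set $\mathcal{D}'$, I use that in each drifted web a.s.\ uniqueness of paths from deterministic points forces the closures over $\mathcal{D}$ and over $\mathcal{D}'$ to coincide.

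\textbf{Uniqueness.} By (c), the pair $(\WW^l_d,\WW^r_d)$ is a measurable function of countably many paths. Their joint law is fixed by (b), so the law on $\mathcal{H}^2$ is determined.

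\textbf{Main obstacle.} I expect the hardest step to be the well-posedness and consistency of the finite left-right system. This requires existence and uniqueness in law for (\ref{eq:LR_SDE}), which I would get via the time-change formulation (\ref{eq:LR_SDE_new}) and its pathwise uniqueness. It also requires showing that only finitely many coalescing or crossing events occur on compact time intervals, so that the iterated construction is well defined.
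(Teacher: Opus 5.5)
The paper gives no proof of this statement; it imports it verbatim from \cite{SS08}. Your outline is essentially the Sun--Swart construction: a Kolmogorov extension of a consistent family of left-right coalescing Brownian motions indexed by a countable dense set, identification of each closure as a drifted Brownian web via \cite{FINR04} after shearing, then (a)--(c), with uniqueness of the law from (b) and (c); it is correct at the level of a sketch.

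One simplification for (b): you do not need continuity in law of the finite left-right system in its starting points. Run the Kolmogorov construction on $\mathcal{D}\cup\{\z_1,\dots,\z_k,\z'_1,\dots,\z'_{k'}\}$ and apply the compactness-plus-uniqueness argument you already use for (c). This shows the closures are unchanged, so the unique paths from these points are exactly the Kolmogorov paths, whose joint law is the required one.
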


Following \cite{SS08}, we are now going to construct the Brownian net $\NN_d$ 
with drift parameter $d > 0$ using the left-right Brownian 
web $(\WW^l_d, \WW^r_d)$. We need to first explain 
the hopping operation on a subset of paths $K \subset \Pi$. 
For $\pi_1, \pi_2 \in \Pi$ we define the set of intersection times between $\pi_1, \pi_2$ 
as 
\begin{align}
A_{\pi_1, \pi_2} := \{ s > \sigma_{\pi_1}\vee \sigma_{\pi_2} : \pi_1(s) = \pi_2(s) \text{ and }
\pi_1(s-) \neq \pi_2(s-) \}, 
\end{align}
and the first intersection time between $\pi_1, \pi_2$ is  
defined as $T_{\pi_1, \pi_2} := \inf (A_{\pi_1, \pi_2})$.
Note that the set $A_{\pi_1, \pi_2}$ can be empty and consequently, $T_{\pi_1, \pi_2}$ can be $+\infty$ 
as well. For $\widehat{\pi}_1, \widehat{\pi}_2 \in \widehat{\Pi}$ 
the set of intersection times and the first 
intersection time are respectively denoted by $A_{\widehat{\pi}_1, \widehat{\pi}_2}$ and $T_{\widehat{\pi}_1, \widehat{\pi}_2}$. 

For any collection of paths $K \subset \Pi$, we let 
${\mathcal{H}}_{\text{int}}(K)$ denote the smallest set
of paths containing $K$ that is closed under hopping at intersection times, that is, 
${\mathcal{H}}_{\text{int}}(K)$ is the set of all paths $\pi \in \Pi$ of the form 
\begin{align}
\label{def:Hop}
\pi = \cup_{k =1}^m \{(\pi_k(t),t) : t \in [t_{k-1}, t_k]\},
\end{align}
where $\pi_1, \cdots , \pi_m \in K$, $\sigma_{\pi_1} = t_0 
< \cdots < t_m = \infty$, and $t_k$ is an intersection time
of $\pi_k$ and $\pi_{k+1}$ for each $k = 1, \cdots , m - 1$.
In other words, ${\mathcal{H}}_{\text{int}}(K)$ contains all paths obtained through 
concatenation of paths in $K$ at intersection times. 
We are now ready to describe the `hopping' construction of $\NN_d$
using $(\WW^l_d, \WW^r_d)$ (Theorem 1.3 of \cite{SS08}). 

\begin{theorem}[Theorem 1.3 of \cite{SS08}]
Consider $(\WW^l_d, \WW^r_d)$ and for any deterministic $\z \in \R^2$ let $l^\z$ 
(resp. $r^\z$) denote the a.s. unique path starting from $\z$ in $\WW^l_d$ (resp. $\WW^r_d$). 
Let $\mathcal{D}^l,\mathcal{D}^r \subset \R^2$ are any two deterministic countable dense sets. 
There exists a $\mathcal{H}$-valued random variable, the  
Brownian net with parameter $d > 0$, denoted by $\NN_d$ such that we have 
 $$
 \NN_d = \overline{ {\mathcal{H}}_{\text{int}} \bigl( \{ l^\z \in \WW^l_d : \z \in \mathcal{D}^l\}\cup 
 \{ r^\z \in \WW^r_d : \z \in \mathcal{D}^r\} \bigr )} \text{  a.s.} 
$$
\end{theorem}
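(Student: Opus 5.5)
The statement is a structural fact about the left-right Brownian web $(\WW^l_d,\WW^r_d)$ of Theorem \ref{thm:LRBweb}. I would prove it in three steps, in the spirit of \cite{SS08}. First, show that for fixed deterministic countable dense sets $\mathcal{D}^l,\mathcal{D}^r$ the closure of $\mathcal{H}_{\text{int}}\bigl(\{l^\z:\z\in\mathcal{D}^l\}\cup\{r^\z:\z\in\mathcal{D}^r\}\bigr)$ is a.s. a compact subset of $(\Pi,d_\Pi)$, so that it defines an $\mathcal{H}$-valued random variable. Second, prove measurability. Third, show that the resulting random set does not depend, a.s., on the choice of $\mathcal{D}^l,\mathcal{D}^r$.

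\textbf{Compactness.} I would use a sandwich argument. Take a hopped path $\pi$ started at $\z=(x,t)$ and a time interval $[s,s+h]$. Each piece of $\pi$ is a left or right path, and a hop occurs only at an intersection time. Hence on $[s,s+h]$ the path $\pi$ stays between a leftmost path $l^{\z_1}$ and a rightmost path $r^{\z_2}$, where $\z_1,\z_2\in\mathcal{D}^l,\mathcal{D}^r$ are chosen slightly to the left and right of $(\pi(s),s)$. This uses the non-crossing of left paths with left paths and of right paths with right paths, together with the ordering "$l$ starting to the left cannot be overtaken from the left by a path that starts between". By the compactness of $\WW^l_d$ and $\WW^r_d$, which Theorem \ref{thm:LRBweb} gives, these two families have a uniform modulus of continuity on compacts of $\R^2_c$. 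This yields equicontinuity of all hopped paths in the compactified metric. Starting times also lie in a compact set, so Arzel\`a--Ascoli in $(\Pi,d_\Pi)$ gives relative compactness, and the closure is compact.

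\textbf{Measurability.} This follows because the construction uses countably many paths and the hopping times are measurable functionals of pairs of paths.

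\textbf{Independence of the dense sets.} Let $\mathcal{D}'^l,\mathcal{D}'^r$ be other dense sets. It suffices to show that every path in $\mathcal{H}_{\text{int}}$ built from the primed sets is a limit of paths built from $\mathcal{D}^l,\mathcal{D}^r$; the claim then follows by symmetry and taking closures. Fix $\pi$ obtained by concatenating $\pi_1,\dots,\pi_m$ at intersection times $t_1<\dots<t_{m-1}$, with each $\pi_k=l^{\z'_k}$ or $r^{\z'_k}$. For each $k$ choose $\z_k^{(n)}\in\mathcal{D}^l$ (respectively $\mathcal{D}^r$) converging to $\z'_k$ from the appropriate side. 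By property (b) of Theorem \ref{thm:LRBweb} and the coalescence of drifted Brownian motions, a.s. $l^{\z_k^{(n)}}$ coalesces with $l^{\z'_k}$ at a time $\tau^{(n)}_k\downarrow\sigma_{\pi_k}$, and likewise for right paths. For $n$ large we get $\tau^{(n)}_k<t_{k-1}$ for $k\ge 2$, so the approximating pieces agree with $\pi_k$ on $[t_{k-1},t_k]$. Consequently $t_{k-1}$ remains an intersection time for the approximating pair. The concatenated approximating path equals $\pi$ after time $\tau^{(n)}_1$ and is close to it before, so it converges to $\pi$ in $d_\Pi$.

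\textbf{Main obstacle.} I expect the delicate point to be the definition of intersection time. One needs $\pi_k(t_k-)\neq\pi_{k+1}(t_k-)$, meaning the two paths must genuinely meet at $t_k$ rather than already coincide just before it. This property is preserved under the replacement above only because the replacement happens strictly before $t_{k-1}$. A further subtlety is that left and right paths can meet at exceptional (random) points where the sticky interaction (\ref{eq:LR_SDE}) takes place. To handle this I would use only countably many deterministic starting points and exploit that left-left and right-right pairs coalesce, so that all relevant intersection times are a.s. determined by the countable skeleton.
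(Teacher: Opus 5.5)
The paper gives no proof of this statement. It quotes it from \cite{SS08} and uses it as a black box, so your argument is a different, self-contained route rather than a reconstruction of anything in the text. Your three steps establish what the paraphrased statement actually asserts, using only Theorem~\ref{thm:LRBweb}:
\begin{itemize}
\item relative compactness of the hopped family, by sandwiching between skeleton paths and using the uniform modulus of continuity of the compact sets $\WW^l_d,\WW^r_d$;
\item measurability;
\item a.s.\ independence of the dense sets, by replacing each piece with a skeleton path that coalesces with it before the previous hop time.
\end{itemize}
Citing \cite{SS08} buys brevity and the stronger content of the original theorem. In particular it gives that, for deterministic $\z$, the paths $l^\z$ and $r^\z$ are a.s.\ the leftmost and rightmost paths of $\NN_d$ starting at $\z$, which the paper asserts when introducing $\NN_d$. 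Your plan does not state this. However, your sandwich combined with the approximation of $\z$ by skeleton points from your third step yields it directly.

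Three places need tightening.
\begin{enumerate}
\item The input that actually makes the sandwich work is the one-sided crossing property. A.s.\ no skeleton right path crosses a skeleton left path from right to left; this is the ordering $L\le R$ built into the left-right SDE in Theorem~\ref{thm:LRBweb}(b). Left--left and right--right coalescence alone does not keep a right piece of $\pi$ to the right of $l^{\z_1}$, so state this property explicitly.
\item Measurability needs more care. If hopping is allowed at every time at which two skeleton paths agree, $\mathcal{H}_{\text{int}}$ of the countable skeleton is uncountable. A sticky left-right pair agrees on a nowhere dense set of positive Lebesgue measure, and hops at two such times separated by an excursion give different paths. So reduce to hops at the countably many excursion endpoints; this subfamily has the same closure.
\item In the third step, property (b) only gives $\tau^{(n)}_k\to\sigma_{\pi_k}$ in probability. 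Choose the approximating sequence, for instance by Borel--Cantelli, so that the convergence holds almost surely, simultaneously for the countably many starting points of the second pair of dense sets.
\end{enumerate}
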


\begin{remark}
The above theorem represents  the `hopping' construction of the Brownian net. 
There are two other constructions of the Brownian net, viz., the `wedge' 
construction and the `mesh' construction. 
In this paper we will describe only the `wedge' construction
by \cite{SS08} as it will be useful for us.    
\end{remark}

In order to simplify the notation, we denote $\WW^l_1, \WW^r_1, \NN_1$ simply as 
$\WW^l, \WW^r, \NN$ and we describe the wedge construction for the choice $d = 1$ only. 
For general $d > 0$ the construction is similar.
We earlier mentioned that the Brownian web $\mathcal{W}$ a.s. uniquely determines its 
dual $\widehat{\WW}$ and vice versa. As the right Brownian web $\WW^r$ forms 
a non-crossing coalescing path family containing paths starting from a dense set, 
it a.s. uniquely determines its dual $\widehat{\WW}^r$
such that the dual paths in $\widehat{\WW}^r$ do not cross forward paths in $\WW^r$. 
Similarly, $\WW^l$ a.s. uniquely determines $\widehat{\WW}^l$.
As observed in \cite{SS08}, the dual left-right Brownian web $(\widehat{\WW}^l, \widehat{\WW}^r)$ is equally distributed with $-(\WW^l,\WW^r)$, 
the left-right Brownian web rotated by $180^0$ 
around the origin. 

For any $\widehat{r} \in \widehat{\WW}^r $ and 
$\widehat{l} \in \widehat{\WW}^l $ with 
$\widehat{r}(s) < \widehat{l}(s)$ at the time $s := \sigma_{\widehat{r}}
\wedge \sigma_{\widehat{l}}$, let $T$ denote the first intersection time 
$T_{\widehat{r} , \widehat{l}}$. Note that $T$ could be $- \infty$. The open set 
between the excursion of $\widehat{r}$ and $\widehat{l}$ 
during the time duration $(T < u < s)$ with left and right boundary $\widehat{r}$ and $\widehat{l}$ respectively is given by 
$$
W = W(\widehat{r}, \widehat{l}) := \{ (x, u) \in  \R^2 : T < u < s, \widehat{r}(u) < x < \widehat{l}(u)\}
$$ 
and termed as the {\it wedge} with left and right boundary $\widehat{r}$ and $\widehat{l}$.
We say that a path $\pi \in \Pi$ enters an open set $A \subset \R^2$ from {\it outside} if
there exists $\sigma_\pi < t_1 < t_2$ such that $\pi(t_1) \notin \bar{A}$ and 
$\pi(t_2) \in A$. The following theorem gives the wedge characterization of the Brownian net.
We state it for the choice $d=1$ only. The general version for $d > 0$ holds similarly.
\begin{theorem}[Theorem 1.10 of \cite{SS08}]
Let $(\WW^l, \WW^r, \widehat{\WW}^l, \widehat{\WW}^r)$ be the 
standard left-right Brownian web and its dual. 
Then, almost surely  
$$
\NN = \{ \pi \in \Pi : \pi \text{ does not enter any wedge of }(\widehat{\WW}^l, \widehat{\WW}^r)
\text{ from outside}\}.
$$
is the Brownian net associated with $(\WW^l, \WW^r)$.
\end{theorem}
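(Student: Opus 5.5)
The plan is to prove the two inclusions separately, using the hopping construction (Theorem 1.3 of \cite{SS08}) as the definition of $\NN$. Write $\NN^{\ast}$ for the set of paths that do not enter any wedge of $(\widehat{\WW}^l,\widehat{\WW}^r)$ from outside. We show $\NN\subset\NN^{\ast}$ and then $\NN^{\ast}\subset\NN$.

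\emph{Step 1: crossing rules.} The first task is to establish, almost surely, the following crossing rules between forward and dual paths.
\begin{itemize}
\item[(a)] Forward left paths do not cross paths of $\widehat{\WW}^l$, and forward right paths do not cross paths of $\widehat{\WW}^r$. This is the usual web/dual duality, applied to $\WW^l$ and $\WW^r$ separately.
\item[(b)] A forward left path $l$ can cross a dual right path $\widehat{r}$ only from right to left. Symmetrically, a forward right path $r$ can cross a dual left path $\widehat{l}$ only from left to right.
\end{itemize}
For (b), the idea is as follows. Since $l^\z\le r^\z$ and $\widehat{r}$ does not cross $r^\z$, a left path that crossed $\widehat{r}$ from left to right would force a right path started at a nearby rational point to cross $\widehat{r}$ as well, which contradicts (a). This argument uses the ordering $l\le r$ for paths with common starting points, together with the density of starting points.

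\emph{Step 2: $\NN\subset\NN^{\ast}$.} Take a wedge $W(\widehat{r},\widehat{l})$. A path entering it from outside must do one of the following at some time:
\begin{itemize}
\item[(i)] cross $\widehat{r}$ from left to right;
\item[(ii)] cross $\widehat{l}$ from right to left;
\item[(iii)] enter through the bottom point $(\widehat{r}(T),T)$, which the open wedge excludes.
\end{itemize}
By Step 1, neither a left nor a right path can do (i) or (ii). Now let $\pi\in\mathcal{H}_{\text{int}}(\cdot)$ be a concatenation of left and right paths at intersection times. On each time interval $\pi$ coincides with a single left or right path, and the concatenation times are intersection times, so no crossing can be created at a junction. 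Hence every hopped path avoids entering wedges from outside. Finally, the set of paths not entering a given open set from outside is closed in $(\Pi,d_\Pi)$, up to boundary effects. Using that wedge boundaries are continuous paths and that a countable dense family of dual paths determines all wedges, we pass to the closure and obtain $\NN\subset\NN^{\ast}$.

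\emph{Step 3: $\NN^{\ast}\subset\NN$.} This converse is the main obstacle. Given $\pi\in\NN^{\ast}$ and $\epsilon>0$, the aim is to build a hopped path within $d_\Pi$-distance $\epsilon$ of $\pi$. The approach is as follows.
\begin{itemize}
\item Fix a fine grid of deterministic times $t_0<t_1<\dots$ and points $z_i$ near $(\pi(t_i),t_i)$.
\item Show that on $[t_i,t_{i+1}]$, the path $\pi$ is squeezed between the leftmost path from slightly left of $z_i$ and the rightmost path from slightly right of $z_i$. Otherwise $\pi$ would escape through a region bounded by a dual right path on its left and a dual left path on its right, i.e.\ it would enter a wedge from outside. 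Dual paths started just beyond $\pi(t_{i+1})$ at time $t_{i+1}$ produce exactly such a wedge.
\item Use the sticky left-right interaction (\ref{eq:LR_SDE}) to show that a left path and a right path from nearby points meet at a time arbitrarily close to $t_i$. Hopping between them then steers a hopped path to within $\epsilon$ of $\pi(t_{i+1})$.
\end{itemize}
The delicate part is controlling the mesh structure, i.e.\ the regions between $l^\z$ and $r^\z$ before they meet, uniformly over a compact window. One also has to make sure that the approximating hopped path uses only countably many deterministic starting points from $\mathcal{D}^l\cup\mathcal{D}^r$. I would try first to handle this by a finite-window compactness argument, combined with the almost sure fact that separation points are countable and that meshes have empty interior intersection with $\pi$.
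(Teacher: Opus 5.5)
The paper does not prove this statement; it is quoted from \cite{SS08}, so your outline has to stand on its own. As written, Step 2 has a genuine gap: you list case (iii), entry through the bottom point, and then drop it, but nothing in Step 1 excludes it. Because $\overline{W}\subset\R\times[T,s]$, every path with $\sigma_\pi<T$ that ever visits $W$ enters $W$ from outside, and it can do so without any forbidden crossing. Below $T$ one has $\widehat{r}\le\widehat{l}$ (rotated by $180$ degrees, they form an ordered left--right pair). So a path can run between them, pass through $(\widehat{r}(T),T)$ and move up into $W$ while obeying (a) and (b).

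You need an extra input here; one option is the following. If a left path $l$ did this, (a) and (b) would force $\widehat{r}\le l\le\widehat{l}$ on $(\sigma_l,T]$. By stickiness of the left--right SDE and the strong Markov property at $T$, the set $\{u<T:\widehat{r}(u)=\widehat{l}(u)\}$ has positive Lebesgue measure in every left neighbourhood of $T$. This holds first for dual paths from rational points, and then by approximation. Hence $l=\widehat{l}$ on a set of positive measure. That contradicts the a.s.\ Lebesgue-null contact between paths of $\WW^l$ and $\widehat{\WW}^l$, which comes from Skorohod reflection. Right paths are treated symmetrically, with $\widehat{r}$ in place of $\widehat{l}$.

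There are two further problems. First, your junction claim is not justified for $\mathcal{H}_{\text{int}}$. Suppose you hop from $l$ to $r$ at a time $t$ with $l(t)=r(t)=\widehat{r}(t)$ and $l\le\widehat{r}\le r$ near $t$. The concatenation then crosses $\widehat{r}$ from left to right although neither piece does, and Step 1 does not rule this configuration out. If you hop at crossing times instead, it is excluded, because the abandoned path continues on the far side of the other one and would itself make a forbidden crossing. For merely touching intersection times you need more structure, or the fact that both hopping rules have the same closure.

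Second, Step 3 is a plan rather than a proof. Your squeeze $l^{z_-}\le\pi\le r^{z_+}$ is correct and follows essentially as you indicate: dual paths started just around $\pi(t)$, at a slightly later rational time, form a wedge that $\pi$ visits after having been outside its closure. The steering step, however, is left open. The correctly ordered pair $l^{z_-}\le r^{z_+}$ meets only with probability strictly less than one. To hop forward you must instead use a wrongly ordered pair, for example the right path you are currently following and a left path started just to its right; such a pair does cross almost surely. You would then still have to show, uniformly over a finite grid in a compact window and using equicontinuity of $\WW^l\cup\WW^r$, that these crossings occur before the envelopes around $\pi$ widen beyond $\epsilon$. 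That is the real content of the inclusion $\NN^{\ast}\subset\NN$, and it is not carried out.
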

 
\subsection{Convergence conditions to the Brownian net}

For a sequence of $( \mathcal{H}, \mathcal{B}_{\mathcal{H}})$-valued random variables 
$\{ \Xi_n : n \geq 1\}$, we consider the following conditions:
\begin{itemize}
\item[(C)]{\bf Non-crossing condition:}
There exist subsets of non-crossing paths $\Xi^l_n, \Xi^r_n \subset \Xi_n$, 
such that no path $\pi \in
\Xi_n$ crosses any path $l \in \Xi^l_n$ from right to left, i.e., $\pi(s) > l(s)$ 
and $\pi(t) < l(t)$ for some $s < t$, and no path $\pi \in  \Xi_n$
 crosses any path $r \in \Xi^r_n$ from left to right. 

\item[(I)]{\bf Finite dimensional distributional convergence:}  For any $\z \in \R^2$ 
there exist $l^{\z}_n \in \Xi^l_n$ and $r^{\z}_n \in \Xi^r_n$, such that for
any deterministic $\z_1,\cdots,\z_k , \z^\prime_1,\cdots,
\z^\prime_{k^\prime} \in \R^2$, the collection $\bigl( l^{\z_1}_n ,\cdots,
l^{\z_k}_n , r^{\z^\prime_1}_n , \cdots, r^{\z^\prime_{k^\prime}}_n \bigr )$ 
converges in distribution to a 
collection of left-right Brownian motions starting
at $(\z_1, \cdots , \z_k, \z^\prime_1, \cdots , \z^\prime_{k^\prime})$ 
with drift $d$, described as in (\ref{eq:LR_SDE}).

\item[(H)]{\bf Hopping condition:}  Almost surely, all the paths in $\Xi_n$ can be obtained by hopping of paths in $\Xi^l_n \cup  \Xi^r_n $ at
intersection times, defined as in  (\ref{def:Hop}).

\item[(U)] {\bf Wedge condition:} Before stating this condition we remark that 
if $\{ \Xi_n : n \geq 1\}$ satisfies conditions (C) and (I), then these two conditions 
together would imply that the sequence $\{ \Xi_n : n \geq 1\}$  is tight.  
There exist $\widehat{\Xi}^l_n, \widehat{\Xi}^r_n \in 
\widehat{\mathcal{H}}$, such that the collection of starting points 
of paths in $\widehat{\Xi}^l_n$ (as well as in $\widehat{\Xi}^r_n$) become dense in $\R^2$ as $n \to \infty$, and paths in $\widehat{\Xi}^l_n$ and $\Xi^l_n$ 
(resp. paths in $\widehat{\Xi}^r_n$ and $\Xi^r_n$) do
not cross a.s. Tightness of the sequence $\{ ( \Xi^l_n, \Xi^r_n ) : n \geq 1 \}$ implies tightness of the dual sequence $\{ ( \widehat{\Xi}^l_n, \widehat{\Xi}^r_n ) : n \geq 1 \}$ too. For any sub-sequential weak limit $(\mathcal{Z}_d , \mathcal{Z}^l_d, \mathcal{Z}^r_d, 
\widehat{\mathcal{Z}}^l_d, \widehat{\mathcal{Z}}^r_d)$ 
of the sequence $\{(\Xi_n, \Xi^l_n, \Xi^r_n, \widehat{\Xi}^l_n,\widehat{\Xi}^r_n) : n \geq 1\}$, paths in $\Xi_d$ do not enter any wedge of 
$\bigl(  (\widehat{\mathcal{Z}}^l_d)^{\Q^2}, 
(\widehat{\mathcal{Z}}^r_d)^{\Q^2} \bigr)$ (defined as in (\ref{def:PathsStartingPoints}))
from outside. 
\end{itemize}

We are now ready to state the first convergence condition to the Brownian net from \cite{SSS19}. 
\begin{theorem}[Theorem 6.11 of \cite{SSS19}]
\label{thm:ConvNet_1}
If $\{ \Xi_n : n \geq 1\}$ satisfies conditions (C), (I), (H), (U) as mentioned above 
then $\Xi_n$ converges in distribution to the Brownian net $\NN_d$ with drift parameter $d$.
\end{theorem}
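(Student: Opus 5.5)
The plan is to argue along subsequences and identify every subsequential limit with $\NN_d$. First I would establish tightness. By (C), the families $\Xi^l_n$ and $\Xi^r_n$ are each non-crossing. By (I), paths from a countable dense set converge, so each family satisfies $I_1$ of Theorem \ref{thm:BwebConvFINR} after removing the drift. Remark \ref{rem:Dual_Noncrossing} then gives tightness of $\{\Xi^l_n\}$, $\{\Xi^r_n\}$ and of their duals $\{\widehat{\Xi}^l_n\}$, $\{\widehat{\Xi}^r_n\}$. For $\Xi_n$ itself I would use (C): every path of $\Xi_n$ is sandwiched between a left path and a right path through nearby points. The modulus of continuity of $\Xi_n$ is therefore controlled by that of $\Xi^l_n\cup\Xi^r_n$, which gives tightness of $\Xi_n$ in $\mathcal H$. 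I then fix a subsequential limit $(\mathcal Z_d,\mathcal Z^l_d,\mathcal Z^r_d,\widehat{\mathcal Z}^l_d,\widehat{\mathcal Z}^r_d)$, realised almost surely via Skorohod coupling.

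Second, I would identify the left-right part. By (I), the paths of $\mathcal Z^l_d,\mathcal Z^r_d$ starting in $\Q^2$ are left-right coalescing Brownian motions. Running the argument of Theorem \ref{thm:ConvBweb_Noncrossing} separately on the left family and on the right family shows that $\mathcal Z^l_d$ and $\mathcal Z^r_d$ are exactly the closures of these countable collections. This identifies $(\mathcal Z^l_d,\mathcal Z^r_d)$ as $(\WW^l_d,\WW^r_d)$ by Theorem \ref{thm:LRBweb}. The limiting duals do not cross the forward limits and start from a dense set. Hence, by the uniqueness of duals recalled after Theorem \ref{thm:LRBweb}, they coincide with $(\widehat{\WW}^l_d,\widehat{\WW}^r_d)$.

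Third, I would prove $\NN_d\subset\mathcal Z_d$ via the hopping construction. Take a path $\pi$ obtained by hopping among finitely many $l^{\z_i},r^{\z'_j}$ with $\z_i,\z'_j\in\Q^2$ at intersection times. In the limit, a left and a right path meeting at an intersection time actually cross there, since their difference is a Brownian motion with drift away from sticky periods. The approximating discrete paths therefore cross nearby, and by continuity they intersect at a nearby time. By (H), the concatenation of the discrete paths at these discrete intersection times belongs to $\Xi_n$, and it converges to $\pi$. Since $\mathcal Z_d$ is closed, the closure of such $\pi$, which is $\NN_d$, is contained in $\mathcal Z_d$. The delicate point is intersections with sticky interaction or coalescence. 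I would handle them by using only the first intersection times after a rational time, where crossing holds almost surely.

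Fourth, I would prove $\mathcal Z_d\subset\NN_d$. By (U), no path of $\mathcal Z_d$ enters a wedge of $((\widehat{\WW}^l_d)^{\Q^2},(\widehat{\WW}^r_d)^{\Q^2})$ from outside. Every wedge of the full dual left-right web is approximated from inside by wedges with rational starting points, so entering any wedge from outside forces entering a rational one. The wedge characterisation (Theorem 1.10 of \cite{SS08}) then yields $\mathcal Z_d\subset\NN_d$. This approximation of wedges is the step I expect to be the main obstacle, since wedges bounded by dual paths from non-rational points (for instance from separation points) must be handled carefully. I would try to exploit that each dual path is a limit of dual paths from $\Q^2$ that coalesce with it, so a wedge is eventually exhausted by rational wedges.
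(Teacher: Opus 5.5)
The paper gives no proof of this statement; it is quoted from \cite{SSS19}. The standard argument behind it has the same three parts as your plan: tightness from (C) and (I), the inclusion $\NN_d\subseteq\mathcal{Z}_d$ from (I), (H) and the hopping construction, and $\mathcal{Z}_d\subseteq\NN_d$ from (U) and the wedge characterization. You are right to use (H) in the form ``paths obtained by hopping belong to $\Xi_n$''. That is the inclusion the lower bound needs, although the paper's wording of (H) suggests the reverse one.

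\textbf{The gap is in your third step.} It is not true that a left and a right limit path cross whenever they meet.
\begin{itemize}
\item If $l\le r$, their meeting times form the zero set of the sticky process $r-l$. This set has positive Lebesgue measure, and none of these times is a crossing. A crossing happens only when a right path hits a left path from the left.
\item For the same reason your remedy fails as stated: the first meeting after a rational time $q$ is a crossing only if $r(q)<l(q)$.
\item At a non-crossing intersection time the hypotheses give no control over the discrete paths. They may come close without ever meeting, so (H) cannot be applied there.
\end{itemize}
Restricting the hopping times is therefore not harmless. What you need is the fact, due to Sun and Swart, that hopping \emph{only at crossing times} still generates the net: $\NN_d=\overline{\mathcal{H}_{\text{cross}}(\{l^{\z}:\z\in\Q^2\}\cup\{r^{\z}:\z\in\Q^2\})}$. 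You must cite or prove this; it does not follow from the hopping construction with intersection times. With it, your approximation goes through. At a limiting crossing time $t$ there are $t_-<t<t_+$ at which the limit paths are strictly ordered in opposite ways. For large $n$ the discrete paths are ordered the same way, so they meet at times $\tau_n\to t$. The concatenations at $\tau_n$ lie in $\Xi_n$ by (H) and converge uniformly to the hopped path.

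\textbf{Two smaller points.}

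First, in your second step you cannot apply Theorem \ref{thm:ConvBweb_Noncrossing} to the left family alone. Its wedge hypothesis for two dual left paths is not among your assumptions, since (U) concerns left-right wedges only. The identity $\mathcal{Z}^l_d=\WW^l_d$ is not needed anyway. It suffices that (I) gives $\WW^l_d\subseteq\mathcal{Z}^l_d$ and $\WW^r_d\subseteq\mathcal{Z}^r_d$, and that the limiting dual paths started at \emph{rational} points are those of $(\widehat{\WW}^l_d,\widehat{\WW}^r_d)$. The latter follows by squeezing: a backward path from a deterministic point that crosses no forward path is pinned between forward paths ending to its left and to its right. Full equality of the limiting duals does not follow from non-crossing alone, since backward paths may run along forward paths without crossing them.

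Second, your fourth step is easier than you expect.
\begin{itemize}
\item Suppose $\pi(t_1)\notin\overline{W}(\widehat{r},\widehat{l})$ and $\pi(t_2)\in W(\widehat{r},\widehat{l})$.
\item Take a rational $q\in(t_2,\sigma_{\widehat r}\wedge\sigma_{\widehat l})$ close to $t_2$, a rational $x_1$ slightly to the right of $\widehat{r}(q)$, and a rational $x_2$ slightly to the left of $\widehat{l}(q)$.
\item By non-crossing within each dual family, the dual right path from $(x_1,q)$ stays $\ge\widehat{r}$ and the dual left path from $(x_2,q)$ stays $\le\widehat{l}$. So the wedge they bound lies inside $W(\widehat{r},\widehat{l})$.
\item By equicontinuity of the compact dual families, this wedge still contains $(\pi(t_2),t_2)$.
\end{itemize}
Hence $\pi$ enters a rational wedge from outside.
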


The version below of Theorem \ref{thm:ConvNet_1}, which is adapted from Remark 6.12 \cite{SSS19} is more applicable to us.
\begin{itemize}
\item[(V)] For a sequence $\{ \Xi_n : n \geq 1\}$ satisfying conditions 
(C) and (I), 
there exists a dual sequence $\{ (\widehat{\Xi}^l_n, \widehat{\Xi}^r_n) : 
n \geq 1\}$ such that paths in $\widehat{\Xi}^l_n$ and $\Xi^l_n$ 
(resp. paths in $\widehat{\Xi}^r_n$ and $\Xi^r_n$) do
not cross a.s., the sequence $\{ ( \Xi^l_n, \widehat{\Xi}^l_n, 
\Xi^r_n, \widehat{\Xi}^r_n ) : n \geq 1 \}$ is jointly tight and the 
collection of starting points of paths in $\widehat{\Xi}^l$ 
(as well as in $\widehat{\Xi}^r$) become dense in $\R^2$. 
We consider the sequence $\{ \widehat{\Xi}_n  : n \geq 1\}$
where $\widehat{\Xi}_n := \text{Hop}(\widehat{\Xi}^l_n, \widehat{\Xi}^r_n)$ for $n \geq 1$.
It follows that the sequence  
$$
\{ (\Xi_n, \widehat{\Xi}_n, \Xi^l_n, \Xi^r_n,  \widehat{\Xi}^l_n, 
\widehat{\Xi}^r_n) : n \geq 1\}
$$
is jointly tight. For any sub-sequential weak limit 
$( \mathcal{Z}^l_d, \mathcal{Z}^r_d, \widehat{\mathcal{Z}}^l_d, \widehat{\mathcal{Z}}^r_d)$ 
of $(\Xi^l_n, \Xi^r_n, \widehat{\Xi}^l_n, \widehat{\Xi}^r_n)$ 
we have joint convergence of first intersection times of scaled forward paths and  
dual paths as well.
\end{itemize}

\begin{theorem}
\label{thm:ConvNet_2}
If $\{ \Xi_n : n \geq 1\}$ satisfies conditions (C), (I), (H), (V) as mentioned above 
then $(\Xi_n, \widehat{\Xi}_n)$ converges in distribution to 
the Brownian net and its dual $(\NN_d, \widehat{\NN}_d)$ with drift parameter $d $
where $\widehat{\NN}_d$ is the Brownian net associated with the dual left-right 
Brownian web.
\end{theorem}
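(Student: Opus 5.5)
The plan is to reduce Theorem~\ref{thm:ConvNet_2} to Theorem~\ref{thm:ConvNet_1} and to the wedge characterisations of the Brownian net and its dual.

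\textbf{Step 1: tightness and identification of the forward net.} By (C) and (I), and as remarked in condition (U), the sequence $\{(\Xi_n,\Xi^l_n,\Xi^r_n)\}$ is tight. Condition (V) asserts joint tightness of the enlarged sequence including $\widehat{\Xi}_n=\text{Hop}(\widehat{\Xi}^l_n,\widehat{\Xi}^r_n)$. Fix a subsequential limit
$$(\mathcal{Z},\widehat{\mathcal{Z}},\mathcal{Z}^l,\mathcal{Z}^r,\widehat{\mathcal{Z}}^l,\widehat{\mathcal{Z}}^r).$$
By (I), $(\mathcal{Z}^l,\mathcal{Z}^r)$ contains a copy of the left-right Brownian web $(\WW^l_d,\WW^r_d)$. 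The non-crossing of $\Xi^l_n$ with $\widehat{\Xi}^l_n$, and of $\Xi^r_n$ with $\widehat{\Xi}^r_n$, passes to the limit. Applying Theorem~\ref{thm:ConvBweb_Noncrossing} to each of the left and right families separately, together with density of dual starting points, we identify $\widehat{\mathcal{Z}}^l$ and $\widehat{\mathcal{Z}}^r$ with the dual webs $\widehat{\WW}^l_d$ and $\widehat{\WW}^r_d$. This uses the wedge condition for coalescing families, which is where the joint convergence of first intersection times in (V) enters.

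\textbf{Step 2: the forward wedge condition (U).} Suppose a limit path $\pi\in\mathcal{Z}$ enters a wedge $W(\widehat r,\widehat l)$ from outside, with $\widehat r,\widehat l\in\widehat{\mathcal{Z}}^{\Q^2}$. Approximate $\pi$ by $\pi_n\in\Xi_n$, and $\widehat r,\widehat l$ by $\widehat r_n\in\widehat{\Xi}^r_n$ and $\widehat l_n\in\widehat{\Xi}^l_n$. By (H), $\pi_n$ is a concatenation of left and right paths. A left path cannot cross $\widehat l_n$ and a right path cannot cross $\widehat r_n$. By (C), $\pi_n$ cannot cross $l$-paths from right to left or $r$-paths from left to right, so $\pi_n$ cannot enter the discrete wedge from outside.

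To pass to the limit, convergence of first intersection times in (V) guarantees that the limiting wedge is the limit of discrete wedges, and not an artefact of $\widehat r_n,\widehat l_n$ touching without meeting. Hence (U) holds, and Theorem~\ref{thm:ConvNet_1} yields $\mathcal{Z}\overset{d}{=}\NN_d$. Moreover, by the hopping construction, $\mathcal{Z}$ is the net determined a.s. by $(\mathcal{Z}^l,\mathcal{Z}^r)$.

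\textbf{Step 3: the dual.} By symmetry of the assumptions, $\widehat{\Xi}_n$ satisfies (C) and (H) by construction, and (I) in the rotated sense by Step 1. The same argument with forward and dual roles swapped, where forward left and right paths now form the wedges, shows that no path of $\widehat{\mathcal{Z}}$ enters a forward wedge of $(\mathcal{Z}^l,\mathcal{Z}^r)$ from outside. Hence $\widehat{\mathcal{Z}}$ is the Brownian net associated with $(\widehat{\WW}^l_d,\widehat{\WW}^r_d)$.

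Since both components are a.s. determined by the left-right web $(\WW^l_d,\WW^r_d)$, whose joint law with its dual is unique, the law of $(\mathcal{Z},\widehat{\mathcal{Z}})$ is that of $(\NN_d,\widehat{\NN}_d)$. Every subsequence therefore has the same limit, which gives convergence.

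The main obstacle is the limit passage in Step 2. Discrete non-entry must survive scaling, and this needs (V)'s convergence of first intersection times between forward and dual paths. I would handle it by showing that, for rational wedge endpoints, the wedge boundaries and their meeting time converge jointly. I would then use the strict entry ($\pi(t_2)$ in the open wedge) to get a discrete entry for large $n$, which is a contradiction.
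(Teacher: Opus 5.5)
Your overall reduction matches what the paper intends. The paper gives no argument for Theorem~\ref{thm:ConvNet_2}; it presents the statement as the variant of Theorem~\ref{thm:ConvNet_1} from Remark 6.12 of \cite{SSS19}, where (U) comes from convergence of first meeting times of dual left--right paths. Your limit passage is also sound: with $T_n\to T$ and locally uniform convergence, both $\pi(t_2)\in W$ and $\pi(t_1)\notin\overline W$ transfer to $\pi_n$ and $W(\widehat r_n,\widehat l_n)$ for large $n$.

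The gap is the discrete input to that contradiction, namely your claim that $\pi_n$ cannot enter $W(\widehat r_n,\widehat l_n)$ from outside. The boundaries of this wedge are \emph{dual} paths. Condition (C) only controls crossings of forward paths by forward paths, and (V) only gives same-type non-crossing ($\Xi^l_n$ with $\widehat\Xi^l_n$, and $\Xi^r_n$ with $\widehat\Xi^r_n$). Nothing you cite prevents a left-path segment of $\pi_n$ from crossing $\widehat r_n$ from left to right. Nor does it prevent a right-path segment from crossing $\widehat l_n$ from right to left, or $\pi_n$ from slipping in through the tip $\widehat r_n(T_n)=\widehat l_n(T_n)$. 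So the sentence ``by (C) \dots\ so $\pi_n$ cannot enter the discrete wedge'' does not follow. To turn (C) into the needed statement, you would need a right path at the time a crossing of $\widehat r_n$ begins: one started no later than that time and lying between $\pi_n$ and $\widehat r_n$, or starting on $\pi_n$ and dominating it afterwards. That right path would then be forced to cross $\widehat r_n$. Conditions (C), (I), (H), (V) give no such path at fixed $n$, since (I) is purely asymptotic.

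So this property must be supplied, either as an explicit hypothesis or from the model; in \cite{SSS19}, discrete non-entry is part of what is assumed. For the Howard network it does hold. Two facts give it: $h_{n,(l)}(x,t)\le h_{n,(r)}(x,t)$, and $x\mapsto h_{n,(r)}(x,t)$ is order preserving on open vertices. Hence every path of $\NN^n$ stays weakly left of the rightmost path from each of its vertices, and that rightmost path cannot cross $\widehat r_n$. Open and dual vertices never coincide at integer times, and distinct segments meeting inside a time slab exchange order. Therefore passing through the tip also forces a forbidden crossing.

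The same missing ingredient appears in Step 3. There you say that $\text{Hop}(\widehat\Xi^l_n,\widehat\Xi^r_n)$ satisfies (C) ``by construction'' and does not enter discrete forward wedges. Both claims need an ordering between dual left and dual right steps; here that is $\widehat h_{n,(r)}\le\widehat h_{n,(l)}$ together with monotonicity of the dual steps. This again comes from the model structure, not from the stated conditions.
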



Theorem \ref{thm:ConvNet_2} would be our main tool to prove Theorem \ref{thm:Net_Conv}.
To prove Theorem \ref{thm:ConvNet_2} for our model,  
we construct a dual graph for the Howard network in the next section and 
we discuss its extension in presence of branching.

\section{A dual process for the Howard's model and the effect of branching}
\label{sec:Howard_Dual}

From the discussion on convergence conditions to the Brownian net particularly Theorem 
\ref{thm:ConvNet_2}, it is clear that we need to construct a `good' dual process. 
Following \cite{RSS16B}, we describe a dual process for the `original' Howard network, i.e.,  without branching. 

For $(x,t) \in \Z/2 \times \Z$ we define 
\begin{align*}
(r(x,t), l(x,t)) := 
\begin{cases}
((x + J^+_{(x,t)}, t), (x - J^-_{(x,t)}, t)) & \text{ if } x \in \Z\\
((x + J^+_{(x-1/2,t)}, t), (x - J^-_{(x+1/2,t)}, t)) & \text{ if } x \notin \Z.
\end{cases}
\end{align*}
We observe that $r(x,t)$ and  $l(x,t)$ respectively 
denote the right open and the left open neighbours of $(x,t)$. 
In order to construct the dual process, we need to describe the dual vertex set first. 
The idea is that, for each $m \in \Z$, on the line $y = m$,  we will place a 
dual vertex midway between any two consecutive open vertices. 
Therefore, for $(x,t) \in V$, the notation $\widehat{r}(x,t) := (x + J^+_{(x,t)}/2 , t)$ denotes 
the right dual neighbour. Similarly,  $\widehat{l}(x,t) := (x - J^-_{(x,t)}/2 , t)$ 
denotes the left dual neighbour. The dual vertex set is given by 
$$
\widehat{V} := \{ \widehat{r}(x,t), \widehat{l}(x,t) : (x,t) \in V\} \subset \Z/2 \times \Z.
$$
From each dual vertex $(y,s) \in \widehat{V}$, we define exactly one outgoing dual edge 
to some other dual vertex $(w,s-1) \in \widehat{V}$ such that it does not cross any forward edge in $E$. We do that as follows:

For $(y,s) \in \widehat{V}$, we define   $a^l(y,s), a^r(y,s) \in V$ with 
$a^l(y,s)(2) = a^r(y,s)(2) = s-1$ such that
\begin{align*}
a^l(y,s)(1) & := \sup\{ z : (z,s-1) \in V \text{ and }h(z,s-1)(1) < y\} \text{ and },\\
a^r(y,s)(1) & := \inf\{ z : (z,s-1) \in V \text{ and }h(z,s-1)(1) > y\}.
\end{align*}
We now define the dual step $\widehat{h}(y,s)$ as 
\begin{align*}
\widehat{h}(y,s) := 
\begin{cases}
(a^l(y,s) + a^r(y,s))/2 & \text{ if }y \notin Z \text{ or if } (y,s-1) \notin V\\
((y,s-1) + a^l(y,s))/2  & \text{ if }(y,s-1) \in V \text{ with }h(y,s-1)(1) > y \\
((y,s-1) + a^r(y,s))/2  & \text{ if }(y,s-1) \in V \text{ with }h(y,s-1)(1) < y .
\end{cases}
\end{align*}
We observe that 
\begin{align*}
(a^l(y,s)(1) ,a^r(y,s)(1)) := 
\begin{cases}
(y - J^{-}_{(y,s-1)}, y + J^{+}_{(y,s-1)}) & \text{ if }y \in Z \\
(y - J^{-}_{(y+1/2,s-1)}, y + J^{+}_{(y-1/2,s-1)})  & \text{ if }y \notin Z .
\end{cases}
\end{align*}
Finally, the dual network is given by $\widehat{G} := ( \widehat{V}, \widehat{E})$, where 
$\widehat{E} := \{\langle (y,s), \widehat{h}(y,s)\rangle : (y,s) \in \widehat{V}\}$  denotes the set of 
dual edges.

As in the case of forward edges, for $k \geq 1$, the $k$-th dual 
step starting from $(y,s)\in \widehat{V}$  is denoted by $\widehat{h}^k(y,s)$. 
Joining successive dual steps linearly we get the dual path $\widehat{\pi}^{(y,s)} \in 
\widehat{\Pi}$ starting at $(y,s)$. The collection of all dual paths, 
denoted by $\widehat{\Xi} := \{ \widehat{\pi}^{(y,s)} : (y,s) \in \widehat{V}\}$, 
after taking closure gives a $(\widehat{{\mathcal{ H }}}, d_{\widehat{{\mathcal{ H }}}})$ valued random variable.
The $n$-th order diffusively scaled version is defined as $S_n(\widehat{\Xi})$.
Recall that $\Xi$ denotes the collection of paths obtained from the Howard network (see 
(\ref{def:HowardPath})). 
Theorem 2.9 of \cite{RSS16B} gives joint convergence for the 
scaled Howard network and it's dual to the double 
Brownian web. An immediate corollary of the above result is that 
for any fixed $(y,s) \in \R^2$ there exists a sequence $\{ (y^n, s^n) \in \widehat{V}
: n \geq 1\}$ almost surely such that 
$$
S_n(y^n, s^n) \to (y,s) \text{ as }n \to \infty.
$$

\begin{theorem}[Theorem 2.9 of \cite{RSS16B}]
\label{thm:JtConvBweb}
As $n \to \infty$, $\overline{S_n(\Xi , \widehat{\Xi})}$
 converges in distribution to the standard double 
Brownian web $(\WW, \widehat{\WW})$.
\end{theorem}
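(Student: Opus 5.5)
The plan is to apply Theorem \ref{thm:ConvBweb_Noncrossing} to $\Xi_n := \overline{S_n(\Xi)}$ with dual sequence $\widehat{\Xi}_n := \overline{S_n(\widehat{\Xi})}$. The forward paths of the Howard network are a.s. non-crossing. Condition $(I_1)$ for $\{\Xi_n\}$ is the finite-dimensional convergence to coalescing Brownian motions established by Coletti et al. \cite{CFD09}; it is stated there with exactly the normalization $\sigma$ of (\ref{def:Diff_Coeff}). By Remark \ref{rem:Dual_Noncrossing}, the sequence $\{(\Xi_n,\widehat{\Xi}_n)\}$ is then jointly tight as soon as properties (i) and (ii) of that remark hold. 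After that, only the two-sided wedge condition of Theorem \ref{thm:ConvBweb_Noncrossing} remains to be checked.

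For (i), I would check directly from the definition of $\widehat{h}$ that a dual edge never crosses a forward edge. The dual step from $(y,s)$ lands at the midpoint between $a^l(y,s)$ and $a^r(y,s)$, or between $(y,s-1)$ and one of them. By definition of $a^l,a^r$, every forward edge from level $s-1$ to level $s$ lies entirely to the left or entirely to the right of the dual edge. For (ii), I would use that dual vertices sit midway between consecutive open sites on every level, so after scaling their spacing is $O(\epsilon_n)$ horizontally and $\epsilon_n^2$ vertically, hence dense.

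Next, I would establish $(I_1)$ for the dual paths. Conditionally on the configuration at level $s$, a single dual path (and likewise a finite family of dual paths) evolves backward by increments determined by the geometric gaps $J^\pm$ at level $s-1$. I would show this dynamics is again a Howard-type system, or at least one whose joint steps admit the same renewal/martingale structure as forward Howard paths. Then the argument of \cite{CFD09} for forward paths applies: the increments of one path are i.i.d. with the same variance, giving convergence to Brownian motion, and the gap between two paths behaves like a martingale away from the origin. Together these give convergence of the scaled finite dual families to coalescing backward Brownian motions.

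The wedge condition is then argued by contradiction in the limit. Suppose a limiting forward path $\pi$ enters a wedge $W(\widehat{\pi}_1,\widehat{\pi}_2)$ of $\widehat{\mathcal{Z}}^{\Q^2}$ from outside. Since discrete forward and dual paths do not cross, the approximating dual paths $\widehat{\pi}_1^n,\widehat{\pi}_2^n$ must come within $o(1)$ (in scaled units) of each other near the bottom of the wedge without coalescing for a macroscopic time, so that the forward path can slip between them. To exclude this, I need a uniform coalescence estimate: for two dual paths starting at distance $k$ on the lattice, $\P(\text{coalescence time} > t) \le C k/\sqrt{t}$ uniformly in the starting configuration. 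I would prove it via the renewal times of the joint process, at which the configuration below the current level is fresh, combined with an optional stopping bound for the distance process, which is a supermartingale-type process outside a bounded set. The symmetric statement, that a dual path cannot enter a forward wedge from outside, follows by the same estimate for forward paths, which is already available from \cite{CFD09}.

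The main obstacle is this uniform coalescence tail bound for dual paths. Dual paths, like forward ones, have long-range dependence through the shared gaps $J^\pm$: two paths at distance $d$ may look at overlapping blocks of the environment. The joint process is therefore not Markov in the positions alone, and one must carefully construct renewal steps and control the conditional drift and variance of the gap uniformly in the past. I would try first to couple the dual pair with a forward Howard pair in the $180^\circ$-rotated environment, step by step up to renewals, so as to transfer the known forward estimates.
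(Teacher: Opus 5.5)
The paper does not prove this statement. It is imported from \cite{RSS16B}, with the diffusion constant moved into $S_n$, so there is no in-paper argument to compare yours with. Your architecture is the standard one and it is sound:
\begin{itemize}
\item Theorem \ref{thm:ConvBweb_Noncrossing}, with $(I_1)$ taken from \cite{CFD09};
\item a direct check that dual edges never cross (in fact never touch) forward edges, and that $\widehat{V}$ becomes dense;
\item Remark \ref{rem:Dual_Noncrossing} for joint tightness and $\widehat{\mathcal{Z}}\supseteq\widehat{\WW}$;
\item the two-sided wedge condition, derived from $Ck/\sqrt{t}$ coalescence tails for forward pairs (available in \cite{CFD09}) and for dual pairs.
\end{itemize}

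Two of your steps can be dropped or would not work as stated.
\begin{itemize}
\item Dual $(I_1)$ is not needed. A backward path from a deterministic point that does not cross $\WW\subseteq\mathcal{Z}$ must be the $\widehat{\WW}$-path from that point. So limits of the designated discrete dual paths from $\Q^2$-points are identified automatically. This is also what reduces the wedge argument to countably many fixed pairs, where a single-pair estimate plus the Markov property suffices.
\item The coupling with a forward Howard pair in the rotated environment would not work. The dual system is not a Howard network: its increments (\ref{eq:DualPathIncrement_1}), (\ref{def:DualIncrement_2}) have different laws, and they depend on whether the current position lies in $\Z$.
\end{itemize}

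The main correction concerns what you call the main obstacle: it is not there.
\begin{itemize}
\item Whether the forward edge from $(z,s-1)$ lands left or right of a dual vertex $(y,s)$ is decided by the sign of $z-y$, and, when $z=y$, by $R_{(y,s-1)}$. So a dual step from level $s$ is a function only of the current position and of $\{(B_{(u,s-1)},R_{(u,s-1)}) : u\in\Z\}$.
\item Hence finitely many dual paths started on a common level form a Markov chain in their positions, driven by fresh randomness at every step (this is what underlies Proposition \ref{prop:GenDualPath_Markov} and Lemma \ref{lem:independence}).
\item By Proposition \ref{prop:DualPathMartingale}, used with the common filtration, the gap $Z_k$ between two dual paths is an exact nonnegative martingale absorbed at $0$. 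Its increments have uniformly exponential tails.
\item Distinct dual vertices on one level are at distance at least $1$, so $Z_k\geq 1$ whenever $Z_k>0$.
\item $\mathrm{Var}(Z_{k+1}\mid\mathcal{F}_k)$ is bounded below on $\{Z_k>0\}$. At bounded distances the gap jumps to $0$ with probability bounded below: overlap of the exploration regions has probability bounded below, and then Lemma \ref{lem:CoalescenceProb} applies. At large distances, the gap to the left of the left path is, with probability bounded below, an independent geometric variable entering $Z_{k+1}$ with coefficient $\tfrac12$.
\item The usual optional stopping argument then finishes. Let $\sigma_M$ be the exit time from $(0,M)$. Then $\P(Z_{\sigma_M}\geq M)\le k/M$ and $\E\sigma_M\le C\,\E Z_{\sigma_M}^2\le C'kM$. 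Taking $M=\sqrt{t}$ gives $\P(\tau>t)\le Ck/\sqrt{t}$, uniformly in the starting configuration.
\item You may apply this bound at the first time the scaled gap drops below $\delta$. No renewal construction is needed.
\end{itemize}

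One caveat on your citation of \cite{CFD09}. The quantity (\ref{def:Diff_Coeff}) equals $(1-p)\E[M^2]$ with $M=\min(J^+,J^-)\sim\mathrm{Geom}(p(2-p))$, which is the variance of a single forward Howard step. So the invariance principle of \cite{CFD09} yields standard Brownian motions after dividing space by $\sqrt{\sigma(p)}$, not by $\sigma(p)$ as in $S_n$. Your claim that \cite{CFD09} uses ``exactly'' this normalization needs that correction, and so does the paper's $S_n$ as written.
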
    
\begin{remark}
In Theorem 2.9 of \cite{RSS16B}, the authors actually proved convergence to the double Brownian web with diffusion coefficient $\sigma(p) = \frac{(1-p)(2 - 2p +p^2)}{p^2(2-p)^2}$, 
whereas in this work, we have incorporated the diffusion coefficient $\sigma$ in the scaling itself (see (\ref{def:DiffusiveScaling})) and therefore, convergence to the standard
double Brownian web follows.   
\end{remark}
Next we extend the construction of dual process in case of branching.
The branching for the forward (original) Howard model induces branching 
for the dual model as well. For $(y,s) \in \widehat{V} $,  
the right dual step, which would not cross 
any of the rightmost (forward) edges, (corresponding to the branching probability $\beta_n$)
is denoted by $\widehat{h}_{n,(r)}(y,s)$ and is defined as
\begin{align*}
\widehat{h}_{n,(r)}(y,s) := 
\begin{cases}
\widehat{h}(y,s) & \text{ if }(y,s-1) \notin V \\
\widehat{h}(y,s) & \text{ if }(y,s-1) \in V  \text{ and }\text{Br}^n_{(y,s-1)} = 0\\
\frac{((y,s-1) + l(y,s-1))}{2} =  (y - \frac{J^-_{(y,s-1)}}{2},s-1) & 
\text{ if }(y,s-1) \in V  \text{ and }\text{Br}^n_{(y,s-1)} = 1.
\end{cases}
\end{align*} 
Similarly, the leftmost step $h_{n,(l)}(x,t)$ is defined as follows:
\begin{align*}
\widehat{h}_{n,(l)}(y,s) := 
\begin{cases}
\widehat{h}(y,s) & \text{ if }(y,s-1) \notin V \\
\widehat{h}(y,s) & \text{ if }(y,s-1) \in V  \text{ and }\text{Br}^n_{(y,s-1)} = 0\\
\frac{((y,s-1) + r(y,s-1))}{2} = (y + \frac{J^+_{(y,s-1)}}{2},s-1) & 
\text{ if }(y,s-1) \in V  \text{ and }\text{Br}^n_{(y,s-1)} = 1.
\end{cases}
\end{align*}
For $k \geq 1$, the $k$-th right  and left dual steps starting 
from $(y,s) \in \widehat{V}$, denoted by $\widehat{h}^k_{n,(r)}(y,s)$ 
and $\widehat{h}^k_{n,(l)}(y,s)$, 
are defined similarly. This allows us to define the right dual path and 
left dual path starting from $(y,s) \in \widehat{V}$ as $\widehat{\pi}^{(y,s),n}_r$ and  
$\widehat{\pi}^{(y,s),n}_l$ respectively obtained by 
linearly joining the right dual steps and left 
dual steps with branching parameter/probability $\beta_n$. 
The collections, $\widehat{\Xi}^n_{r} := \{ \widehat{\pi}^{(y,s),n}_r : (y,s) \in 
\widehat{V} \}$ and $\widehat{\Xi}^n_{l}  := \{ \widehat{\pi}^{(y,s),n}_l : (y,s) \in \widehat{V} \}$, are  defined 
similarly. 

The collection of all dual outgoing edges in the branching Howard model 
with branching probability $\beta_n$ is defined as 
\begin{align*}
\widehat{E}^n := \{ \langle (y,s), \widehat{h}_{n, (r)}(y,s)\rangle , 
\langle (y,s), \widehat{h}_{ n, (l)}(y,s)\rangle : (y,s) \in \widehat{V}\}.
\end{align*}
We define the corresponding collection of paths as 
\begin{align*}
\widehat{\NN}^n := \Bigl \{ & \widehat{\pi} \in \widehat{\Pi} : 
(\widehat{\pi}(\sigma_{\widehat{\pi}}),
\sigma_{\widehat{\pi}}) = (y,s) \text{ for some }(y,s)\in \widehat{V}
\text{ and for all }k \in (-\infty, s] \cap \Z \text{ we have }    \\
& (\widehat{\pi}(k), k) \in  \{ \widehat{h}^{m_1}_{n, (l)}(\widehat{h}^{n_1}_{n, (r)}
(\cdots (\widehat{h}^{n_j}_{n, (r)}(y,s)))) : 
m_1, n_1, \cdots , m_j,  n_j \geq 0 \text{ with }\sum_{i=1}^j(m_i + n_i) = s - k\}\\
&   \text{ and for any }s^\prime \in (k-1,k) \text{ linearly join }
(\widehat{\pi}(k-1), k-1), (\widehat{\pi}(k), k) \Bigr \}.
\end{align*}
We actually prove a stronger version of Theorem \ref{thm:Net_Conv}
as stated in Theorem \ref{thm:DoubleNet_Conv}.
\begin{theorem}
\label{thm:DoubleNet_Conv}
We have 
$$
\overline{S_n(\NN^n, \widehat{\NN}^n )} \Rightarrow (\mathcal{Z}_b, \widehat{\mathcal{Z}}_b) \text{ as }
n \to \infty,
$$
where $b \in [0,\infty)$ is as in (\ref{eq:Limit_b}). About the
 limiting random variable, we  have that 
\begin{align*}
(\mathcal{Z}_b, \widehat{\mathcal{Z}}_b) \stackrel{d}{=}
\begin{cases}
(\mathcal{N}_{\bar{b}}, \widehat{\mathcal{N}}_{\bar{b}}) & \text{ if } b > 0\\
(\mathcal{W}, \widehat{\mathcal{W}}) & \text{ if } b = 0~,
\end{cases}
\end{align*}
where ${\bar{b}} = b \frac{(1-p)}{(2-p)^2} > 0$. 
\end{theorem}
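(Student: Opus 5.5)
The plan is to verify conditions (C), (I), (H), (V) of Theorem \ref{thm:ConvNet_2} for $\Xi_n := \overline{S_n(\NN^n)}$. We take $\Xi^l_n$ and $\Xi^r_n$ to be the scaled collections of leftmost and rightmost paths, obtained by always following $h_{n,(l)}$ and $h_{n,(r)}$ respectively. The dual collections are $\widehat{\Xi}^l_n := S_n(\widehat{\Xi}^n_l)$ and $\widehat{\Xi}^r_n := S_n(\widehat{\Xi}^n_r)$. The case $b=0$ will be handled separately at the end.

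Conditions (C) and (H) are combinatorial. Leftmost paths are coalescing and non-crossing, since they form a Howard-type network whose ties at branching sites of $V_{\text{sp}}$ are resolved to the left. A path in $\NN^n$ at any lattice time chooses between $h_{n,(l)}$ and $h_{n,(r)}$. Since $h_{n,(l)}(x,t)(1)\le h_{n,(r)}(x,t)(1)$ and steps from distinct vertices at one level preserve order, no net path can cross a leftmost path from right to left, and symmetrically for rightmost paths. This gives (C). For (H), any net path is the concatenation of the leftmost and rightmost paths issued from its successive vertices, glued at lattice times where they coincide, which are intersection times. The non-crossing of $\widehat{\Xi}^n_r$ with rightmost forward edges is built into the definition of $\widehat{h}_{n,(r)}$ and can be checked case by case from the definition. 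The density of starting points and the joint tightness in (V) follow from Theorem \ref{thm:JtConvBweb} and standard arguments for non-crossing families.

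The main step is (I): convergence of finitely many scaled left and right paths to left-right coalescing Brownian motions with drift $\bar b$. A single rightmost path differs from a Howard path only at sites in $V_{\text{sp}}$ with $\text{Br}^n=1$. At such sites it moves right by $J^+$ instead of a symmetric $\pm J$ choice, which gives a mean drift per step of order $\beta_n\,\E[J\,;\,\text{tie}]$. Computing $\P(\text{tie},J^+=k)$ should yield $\bar b=b(1-p)/(2-p)^2$ after the scaling $\epsilon_n/\sigma$ and time $\epsilon_n^{-2}$. A martingale central limit theorem (or Girsanov/coupling with the unbranched walk, whose limit is known from \cite{CFD09,RSS16B}) then gives Brownian motion with drift $\pm\bar b$.

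For pairs and finite collections, following the strategy of \cite{EFS17}, we will analyse the joint process of a left and a right path, or of two dual paths. We split its time into three parts: (a) time when the paths are far apart and evolve almost independently; (b) time when they are at bounded distance but distinct; (c) time when they coincide. We then show that the scaled time in (b) vanishes. During (c), separation occurs only at branching sites, at rate $\asymp\beta_n$, and the scaled coincident time must converge to the local-time functional $C_t$ of (\ref{eq:LR_SDE_new}). The difficulty is the long-range dependence of Howard paths: increments of two nearby walks are not independent even at distance $\gg1$. I would handle this by a regeneration or renewal decomposition of the joint walk, using the geometric tails of $J$ to show that correlations decay exponentially in the distance. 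Tightness of the triple of times from (a), (b), (c), together with identification via the martingale problem for (\ref{eq:LR_SDE_new}), gives the limit. Extending to $k+k'$ paths is done by the inductive partition scheme of Section \ref{subsec:BrownianNet}(d), together with coalescence of same-type paths as in the web case. I expect this step, controlling the sticky interaction and time spent at small distances under long-range dependence, to be the main obstacle.

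Finally, (V) requires joint convergence of first intersection times of forward and dual scaled paths. I will use the same three-part time decomposition for a forward path paired with a dual path. Non-crossing shows that the discrete first meeting times converge to the limiting hitting times, which is the wedge-type argument of Theorem \ref{thm:ConvBweb_Noncrossing}. Theorem \ref{thm:ConvNet_2} then gives convergence to $(\NN_{\bar b},\widehat{\NN}_{\bar b})$ for $b>0$. For $b=0$, the drift vanishes and the scaled coincident time of left and right paths grows without bound. So left and right paths merge in the limit, and $\NN^n$ is close to the Howard web. Then $I_1$ together with the conditions of Theorem \ref{thm:ConvBweb_Noncrossing} yields $(\WW,\widehat{\WW})$.
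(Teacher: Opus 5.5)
\textbf{Main gap: the step for (V).} Your treatment of (C), (H) and tightness, and your plan to get convergence of the coincidence time in (I), are in line with the paper. The step you use for (V), however, fails.

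The meeting times that Theorem \ref{thm:ConvNet_2} needs are those between a dual rightmost path $\widehat r_n$ and a dual leftmost path $\widehat l_n$ started with $\widehat r_n$ to the left; these are the wedge bottoms. The analogous forward left/right pairs are also needed. They are not meeting times of a forward path with a dual path. Such a pair evolves in opposite time directions, so there is no joint step-by-step process for your decomposition to act on.

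More seriously, non-crossing does not give convergence of meeting times. Since $\widehat l_n\ge\widehat r_n$ throughout, for $b>0$ the limiting pair meets tangentially at some time $U$ (measured backwards from the starting level). After that, $\widehat l-\widehat r$ is a sticky reflected Brownian motion. Uniform convergence of paths only yields $\liminf_n U_n\ge U$ for the discrete meeting times $U_n$. It does not rule out that the scaled discrete paths come within $o(1)$ of each other near $U$, separate without touching, and meet only much later. That would leave a thin discrete wedge extending past the limiting wedge bottom, which is exactly why the paper stresses that convergence of first intersection times is not automatic.

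The missing step is joint convergence of the scaled coincidence process $\widehat Z^n$ to $\int_0^{\cdot}\mathbf{1}_{\{\widehat X_1=\widehat X_2\}}\,ds$ for the dual left/right pair (Proposition \ref{prop:ConvLeftRight_Intersec}). The sticky limit spends positive Lebesgue time at $0$ in every neighbourhood after $U$, so the discrete pair must actually coincide near $U$. This gives $\limsup_n U_n\le U$ (Remark \ref{rem:ConvLeftRight_Intersec}). You need to apply your coincidence-time limit to the dual pair and use stickiness, not non-crossing.

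\textbf{Secondary gaps.} The core of (I), that the scaled time in your state (b) vanishes, is only asserted. A distance-based split with ``almost independent'' evolution and exponentially decaying correlations would need error control accumulated over $\epsilon_n^{-2}$ steps, which you do not supply. The paper instead classifies each step by whether the exploration regions for the next step overlap:
\begin{itemize}
\item If they are disjoint, the walks use disjoint variables and are exactly independent (Lemma \ref{lem:independence}, realised by the coupling with $\psi^{(a)},\psi^{(b)},\psi^{(c)}$).
\item If they overlap, the paths coalesce at the next step with probability at least $(1-p)^3$ (Lemma \ref{lem:CoalescenceProb}).
\end{itemize}
Leaving the coalesced state requires a branching, so there are $O(\beta_n\epsilon_n^{-2})$ coalescences in $T\epsilon_n^{-2}$ steps. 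Hence there are at most a constant times as many dependent visits, each of expected length $O(1)$, and the scaled dependent time is $O(\beta_n)\to0$ (Lemma \ref{lem:DependentStateTotalTime}).

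For $b=0$ you cannot invoke Theorem \ref{thm:ConvBweb_Noncrossing}, since $\NN^n$ contains crossing paths. Also, the scaled coincident time is bounded by $T$ rather than growing without bound; what happens is that the limiting left and right paths coalesce on meeting. The paper covers $b=0$ with the same left/right and meeting-time convergence, since Proposition \ref{prop:ConvLeftRight_Intersec} holds for all $b\in[0,\infty)$.
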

In Theorem \ref{thm:DoubleNet_Conv}, we note that  $(\NN_{\bar{b}}, \widehat{\NN}_{\bar{b}})$ 
denotes the double Brownian net with drift parameter ${\bar{b}}$.   
$\widehat{\NN}_{\bar{b}}$ denotes the dual Brownian net associated with 
the dual left-right Brownian web $(\widehat{\WW}_{\bar{b}}^l, \widehat{\WW}_{\bar{b}}^r)$
with drift ${\bar{b}}$. As a first step towards this, 
Santos et. al. \cite{SVZ24} showed that the left-right diffusively scaled Howard web 
$\overline{S_n(\Xi^n_r, \Xi^n_l)}$ converges in distribution 
to the left-right Brownian web $(\WW^l, \WW^r)$ with drift $\frac{(1-p)}{(2-p)^2}$
(see Theorem 3.6 of \cite{SVZ24}). 
The non-crossing nature of each of these networks 
implies joint convergence of the forward network and the 
dual one as stated in Theorem 3.6 of \cite{SVZ24}. 
Further, this implies that the sequence $\{ \overline{S_n(\NN^n)} : n \geq 1\}$ is tight
and any sub-sequential limit contains a copy of the Brownian net.
We highlight that Theorem 3.6 of \cite{SVZ24} does not cover the case $b = 0$,
which has been considered in Theorem \ref{thm:ConvLeftRight_IntersecTime_Jt}.

\begin{theorem}[Theorem 3.6 of \cite{SVZ24}]
\label{thm:ConvLeftRight_Jt}
For $b > 0$, as $n \to \infty$ we have 
$$
\overline{S_n \bigl ( \Xi^n_r,\widehat{\Xi}^n_r, \Xi^n_l,\widehat{\Xi}^n_l \bigr )} 
\Rightarrow (\WW^r_{\bar{b}}, \widehat{\WW}^r_{\bar{b}}, \WW^l_{\bar{b}}, 
\widehat{\WW}^l_{\bar{b}}), 
$$
where $(\WW^r_{\bar{b}}, \widehat{\WW}^r_{\bar{b}}, \WW^l_{\bar{b}}, 
\widehat{\WW}^l_{\bar{b}})$ denotes double left-right 
Brownian web with drift parameter $\bar{b} = \bar{b}(p) = b\frac{(1-p)}{(2-p)^2}$.
\end{theorem}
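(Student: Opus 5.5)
The plan is to reduce Theorem \ref{thm:ConvLeftRight_Jt} to the single-family web results for non-crossing coalescing paths, together with a finite-dimensional convergence statement for mixed left/right paths.

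\textbf{Step 1: tightness and the marginal limits.} Each of $\Xi^n_r$ and $\Xi^n_l$ is a family of non-crossing coalescing paths. Outside $V_{\text{sp}}\cap\{\text{Br}^n=1\}$ the rightmost step coincides with the Howard step $h$; at a branching site it takes the right neighbour. So a single right path is a Howard-type path whose increment law at tie sites is perturbed with probability $\beta_n$. At a tie site the increment is $\pm J$, with $\P(\text{tie at distance } k)=(1-p)^{2k}p^2$ once the vertex directly above is closed. The resulting mean drift per step is therefore
\[
\beta_n\cdot\tfrac12\sum_k k(1-p)\bigl((1-p)^{2}\bigr)^{k-1}\cdots ,
\]
and after the scaling $S_n$ (space factor $\epsilon_n/\sigma$, time factor $\epsilon_n^{-2}$) this should produce the drift $\bar b$ in the limit. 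I would verify this computation and match the constant $(1-p)/(2-p)^2$ to the normalization.

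Given this, the argument of \cite{CFD09,RSS16B} applies to each family separately. For $(I_1)$, the relevant path increments are martingale increments plus a drift, and the dependence has geometric tails in the distance between paths. Conditions $(B_1)$ and $(B_2)$, or alternatively the dual wedge criterion of Theorem \ref{thm:ConvBweb_Noncrossing}, then follow as in \cite{RSS16B}. Consequently $\overline{S_n(\Xi^n_r,\widehat\Xi^n_r)}\Rightarrow(\WW^r_{\bar b},\widehat\WW^r_{\bar b})$, and symmetrically for the left family. Joint tightness of the 4-tuple follows from tightness of the marginals.

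\textbf{Step 2: identifying the joint law.} By Theorem \ref{thm:LRBweb}, it suffices to show that for finitely many starting points the collection $(l_n^{\z_i},r_n^{\z'_j})$ converges to left-right coalescing Brownian motions. The duals are then determined a.s. by the forward webs, as in item (ii) of the double web. Using the partition construction in item (d), this reduces to a single pair $(l_n,r_n)$ from nearby points, together with the fact that distinct pairs become asymptotically independent when they are macroscopically apart.

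For the pair, I would embed the process in a Markov chain on the gap $R-L$ plus an environment. When the gap is large, the two paths use asymptotically independent geometric variables, giving independent Brownian motions with drifts $\pm\bar b$. When $R=L$, the paths move together, and they separate only at a branching tie site, which has probability of order $\beta_n\asymp\epsilon_n$ per step. The claim to prove is that the scaled time spent at gap $0$ or at small gaps converges to $C_t$ in \eqref{eq:LR_SDE_new}, with the correct sticky measure. Concretely, one shows that the limit $(L,R)$ satisfies the martingale characterization of \eqref{eq:LR_SDE}: $R-L$ is a semimartingale whose local time at $0$ produces drift $2\bar b\,dt$ during coincidence periods.

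\textbf{Main obstacle.} The hardest step is controlling the time the pair spends at small but positive gaps. Howard paths interact over long ranges, since they share the same open sites, so at small gaps neither independence nor exact coalescing holds. I would first prove an estimate of the form $\E[\#\{k\le t\epsilon_n^{-2}:0<R_k-L_k\le M\}]=o(\epsilon_n^{-2})$, uniformly for fixed $M$, and a separate bound $O(M\epsilon_n^{-1})\cdot\beta_n$-type control for the number of excursions from $0$. These should follow from a renewal decomposition into excursions away from $0$ and a Lyapunov/hitting-time bound for the gap chain, using the geometric tails of the $J$'s. With this estimate the sticky SDE characterization follows, and the joint convergence of first intersection times comes from the same excursion analysis.
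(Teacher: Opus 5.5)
The paper does not prove this statement. It is quoted from \cite{SVZ24}, and the machinery of Sections \ref{sec:DualPath_Evolution}--\ref{sec:ConvDoubleNet} serves the stronger Theorem \ref{thm:ConvLeftRight_IntersecTime_Jt}, whose proof in turn takes tightness from Theorem \ref{thm:ConvLeftRight_Jt}. Your route is therefore genuinely different, and more self-contained. You get tightness and the two marginal double webs from single-family non-crossing theory. You reduce the joint law to finite-dimensional distributions via Theorem \ref{thm:LRBweb} together with a.s.\ determination of the duals. You identify a left-right pair through the martingale problem for (\ref{eq:LR_SDE}) and uniqueness in law from \cite{SS08}.

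The paper instead labels each step common, dependent or independent according to whether the exploration regions of \emph{that} step overlap. It couples to three independent environments and identifies the limit through the time-changed system (\ref{eq:LR_SDE_new}), with $B^l,B^r,B^c$ meant to come directly from those environments.

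Your ``main obstacle'' is the content of Lemmas \ref{lem:CoalescedStateTime}--\ref{lem:DependentStateTotalTime}, and it needs no Lyapunov bound.
\begin{itemize}
\item The pair is a Markov chain, since each step uses a fresh level.
\item Leaving the coalesced state requires a branching, which has conditional probability at most $\beta_n$ per step. So there are $O(\beta_n\epsilon_n^{-2})=O(\epsilon_n^{-1})$ excursions in expectation, not $O(M\epsilon_n^{-1})\beta_n$.
\item From a gap $\le M$ the paths coalesce at the next step with probability at least some $c_M>0$, so each excursion costs $O(1/c_M)$ small-gap steps.
\end{itemize}

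Two features of your set-up are real advantages over the paper's.
\begin{itemize}
\item The cutoff $M$ is essential. Given overlap with $m$ integer sites between two half-integer dual positions, the chance of coalescing at the next step is $(1-p)/(1-p+mp)$. So a gap-free constant like the $(1-p)^3$ of Lemma \ref{lem:CoalescenceProb} exists only for bounded gaps, and large gaps must be handled through geometric decay of the interaction, as you propose.
\item Classifying step $k+1$ by the gap at time $k$ is predictable. Each path then keeps exactly its marginal drift and variance on every piece, and only the cross-variation feels the interaction. An overlap-based split depends on the very increment being classified: conditioning on overlap pulls each dual path toward the other. Its dependent and independent pieces therefore carry opposite biases that can be of order one after scaling.
\end{itemize}

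In your scheme stickiness needs no separate check. Unit quadratic variations, drifts $\mp\bar b$, $L\le R$ and $\langle L,R\rangle_t=\int_0^t\mathbf{1}_{\{L_s=R_s\}}\,ds$ determine the law. The last identity follows from $\int\mathbf{1}_{\{L\neq R\}}\,dC=0$ (proved as in the paper) and the occupation-times formula for $R-L$.

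Two small corrections.
\begin{itemize}
\item A tie at distance $k$ has probability $(1-p)^{2k-1}p^2$, and there is no factor $\tfrac12$: branching turns a mean-zero $\pm k$ step into $+k$. So the drift is $\beta_n(1-p)/(2-p)^2$ per step, as in Remark \ref{rem:ForwardPathConvergence}.
\item $\sigma(p)$ in (\ref{def:Diff_Coeff}) is the variance of a Howard step, while the computation of $\bar b$ in Proposition \ref{prop:DualPathDiffLimit} scales space by $\epsilon_n$ alone. Under $S_n$ as written, both drift and variance pick up the spatial factor. Matching $\bar b$ is therefore a matter of fixing a convention rather than verifying an identity.
\end{itemize}
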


Note that the above convergence is in the path-space topology, and therefore, 
joint convergence of first intersection times of paths is {\it not} automatic.   
We prove a stronger version of Theorem \ref{thm:ConvLeftRight_Jt} 
in Theorem \ref{thm:ConvLeftRight_IntersecTime_Jt}, which proves joint convergence of first 
intersection times (for a pair of dual rightmost path and dual leftmost path). 
Because of Theorem \ref{thm:ConvNet_2}, proving Theorem \ref{thm:ConvLeftRight_IntersecTime_Jt}  is sufficient to prove Theorem \ref{thm:DoubleNet_Conv}. Before stating Theorem \ref{thm:ConvLeftRight_IntersecTime_Jt} 
we observe that Theorem \ref{thm:ConvLeftRight_Jt} implies that the sequence 
$$
\bigl\{ 
\overline{S_n \bigl(\Xi^n_r,\widehat{\Xi}^n_r, \Xi^n_l,\widehat{\Xi}^n_l, 
\NN^n, \widehat{\NN}^n \bigr)} : n \geq 1 \bigr \}
$$
satisfies the same set up as that of Theorem \ref{thm:ConvNet_2}. In particular, 
the sequence 
$
\{ \overline{S_n (\NN^n, \widehat{\NN}^n)} : n \geq 1 \}
$ is tight. This was observed in \cite{SVZ24} too.
We are now ready to state Theorem \ref{thm:ConvLeftRight_IntersecTime_Jt}.
For all $\z_1, \z_2 \in \R^2$, let $(l^{\z_1}_n, r^{\z_1}_n) \in (\Xi^l_n, \Xi^r_n)$ and 
$(\widehat{l}^{\z_2}_n, \widehat{r}^{\z_2}_n) \in (\widehat{\Xi}^l_n, \widehat{\Xi}^r_n)$
such that we have 
$$
S_n(l^{\z_1}_n, r^{\z_1}_n, \widehat{l}^{\z_2}_n, \widehat{r}^{\z_2}_n) \to (l^{\z_1}, r^{\z_1}, \widehat{l}^{\z_2},\widehat{r}^{\z_2}) \text{ as }n \to \infty ,
$$  
where $(l^{\z}, r^{\z})$ denotes the left-right (forward) 
 Brownian motions starting from $\z$ with drift parameter $\bar{b}$ 
and $(\widehat{l}^{\z^\prime}, \widehat{r}^{\z^\prime})$ denotes the respective dual versions starting from $\z^\prime$.
Let $\widehat{\tau}^{n}_{\x , \y}$ denote the unscaled first intersection time between 
the paths $\widehat{r}^{\x}_{n}$ and $\widehat{l}^{\y}_{n}$.
Let $\tau^{n}_{\x, \y}$ denote the same for the forward 
paths $r^{\x}_{n}$ and $l^{\y}_{n}$.
On the other hand, let $\widehat{\tau}_{ \x, \y}$ 
denote the first intersection time between   
$\widehat{r}^{\x} \in \widehat{\WW}^r_{\bar{b}}$ and  
$\widehat{l}^{\y} \in \widehat{\WW}^l_{\bar{b}}$ and $\tau_{\x, \y}$
 denotes the same for forward paths $r^{\x}  \in \mathcal{W}^r_{\bar{b}}$ and 
 $l^{\y} \in \mathcal{W}^l_{\bar{b}}$. Both the random variables, 
 $\widehat{\tau}_{ \x, \y}$ and $\tau_{\x, \y}$, need not be finite.
 We are now ready to state Theorem \ref{thm:ConvLeftRight_IntersecTime_Jt} 
 which is a stronger version of Theorem \ref{thm:ConvLeftRight_Jt}.

\begin{theorem}
\label{thm:ConvLeftRight_IntersecTime_Jt}
 As $n \to \infty$, $
\overline{\Bigl( S_n \Bigl( (\Xi^n_r,\widehat{\Xi}^n_r), (\Xi^n_l, \widehat{\Xi}^n_l),  
(\NN^n , \widehat{\NN}^n ),
\bigl ( {\tau}^n_{\x,\y }, \widehat{\tau}^n_{\x,\y } \bigr )_{\x, \y \in \Q^2}  \Bigr )
 \Bigr )}                                                                                                                                                                                                                                                                                                                                                                                                                                                                                                                                                                                                                                                                                                                                                 $
converges in distribution to $\bigl((\mathcal{W}^r_{b}, \widehat{\mathcal{W}}^r_{b}), 
( \mathcal{W}^l_{b}, \widehat{\mathcal{W}}^l_{b}), (\mathcal{Z}_b, \widehat{\mathcal{Z}}_b), 
(\tau_{\x,\y}, \widehat{\tau}_{\x, \y})_{\x, \y \in \Q^2 } \bigr )$.
\end{theorem}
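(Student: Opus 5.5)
By Theorem \ref{thm:ConvLeftRight_Jt} (and, for $b=0$, Theorem \ref{thm:JtConvBweb} together with the observation that $\beta_n/\epsilon_n\to 0$ makes branching invisible at diffusive scale), the sequence in Theorem \ref{thm:ConvLeftRight_IntersecTime_Jt} is tight. The path components of any subsequential limit are identified as the double left-right Brownian web, and by the discussion after Theorem \ref{thm:ConvNet_2} the pair $(\mathcal{Z}_b,\widehat{\mathcal{Z}}_b)$ is then determined. Since $\Q^2$ is countable, we may pass to a subsequence along which everything converges jointly. By Skorohod's representation we may assume almost sure convergence. The whole task then reduces to showing, for fixed $\x,\y\in\Q^2$, that $\epsilon_n^2\widehat{\tau}^n_{\x,\y}\to\widehat{\tau}_{\x,\y}$ and $\epsilon_n^2\tau^n_{\x,\y}\to\tau_{\x,\y}$ almost surely (allowing the value $\pm\infty$). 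I treat the dual case; the forward case is symmetric.

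One inequality comes essentially from uniform convergence of paths. Suppose the limiting paths $\widehat{r}^{\x}$ and $\widehat{l}^{\y}$ have crossing order $\widehat{r}^{\x}>\widehat{l}^{\y}$ somewhere before $\widehat{\tau}_{\x,\y}-\delta$; this happens almost surely because the left-right Brownian motions cross immediately after meeting. Then local uniform convergence forces the scaled discrete paths to have met before time $\widehat{\tau}_{\x,\y}-\delta/2$. This gives
\begin{align*}
\limsup_n \epsilon_n^2\widehat{\tau}^n_{\x,\y}\le \widehat{\tau}_{\x,\y}
\end{align*}
in backward time orientation. The delicate direction is the reverse. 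The discrete paths might come within $o(\epsilon_n^{-1})$ distance and even meet (touch) at a time that does not survive in the limit, which would make $\epsilon_n^2\widehat{\tau}^n$ strictly earlier. So I must show that for every $\delta>0$,
\begin{align*}
\limsup_n\P\bigl(\text{paths meet in discrete time, but the limits stay at distance}\ge\delta \text{ on that time window}\bigr)=0.
\end{align*}
This is close to the no-wedge-entering requirement (V) of Theorem \ref{thm:ConvNet_2}.

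To control this, I would follow the strategy of \cite{EFS17}. Run the joint Markov process $(\widehat{r}_n,\widehat{l}_n)$ of the two dual paths, together with the environment needed to make it Markov. For Howard paths, the environment is the configuration of open vertices on the current level between the two paths, which is fresh at each step once the paths are far apart. Partition the time of evolution into three regimes (cf.\ the section titles): (a) the paths are at distance $\ge K$ for a large constant $K$, where the increments are independent up to a geometrically small error, so the difference behaves like a random walk with variance $2\sigma$; (b) the paths are at distance $<K$ but not coincident, where the joint step can be dominated by a finite-state renewal structure; and (c) the paths coincide, where they move together until a branching point of $V_{\text{sp}}$ occurs with branch variable $1$, at rate of order $\beta_n$.

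The key estimates I would aim for are as follows.
\begin{itemize}
\item[(i)] The total time spent in regime (b) up to time $t\epsilon_n^{-2}$ is $o(\epsilon_n^{-2})$ in probability. This follows from a uniform positive probability of either separating to distance $K$ or meeting within a bounded number of steps, together with an excursion-count bound of order $\epsilon_n^{-1}$.
\item[(ii)] Starting from distance $\le K$, the probability that the difference stays within $\delta\epsilon_n^{-1}$ for time $\epsilon_n^{-2}\delta'$ without meeting is small. This uses a uniform lower bound on the meeting probability from short distances, via the finite-range coupling in regime (b).
\end{itemize}
From (ii), once the discrete paths come close, they meet within a time negligible at diffusive scale. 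Conversely, a discrete meeting at a scaled time $s$ forces the limits to be at distance zero at time $s$. The limiting first intersection time is a hitting time of zero by the difference process, which almost surely enters the negative side immediately after hitting zero. Hence these two facts give $\liminf\epsilon_n^2\widehat{\tau}^n\ge\widehat{\tau}$.

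I expect the main obstacle to be estimate (ii), together with the long-range dependence. Howard paths at distance $k$ share the same level of Bernoulli variables, so their steps are not independent even for moderately large $k$. Moreover, the branching only at $V_{\text{sp}}$ couples with that same configuration. I would handle this by conditioning on the open sites at the next level in the interval between the paths. I would show that the exploration beyond the midpoint has exponentially small probability, so that two steps are independent outside an event of probability $e^{-ck}$. These exponentially small errors are then summable over the $O(\epsilon_n^{-2})$ steps in regime (a) once $k\ge C\log(1/\epsilon_n)$. The intermediate range, between $K$ and $C\log(1/\epsilon_n)$, is then handled by a Lyapunov or martingale bound on the distance process.
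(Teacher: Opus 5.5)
Your reduction (tightness along a subsequence, Skorohod, then a pathwise proof that $\epsilon_n^2\widehat\tau^n_{\x,\y}\to\widehat\tau_{\x,\y}$) is sound. However, you have the two directions inverted, and the direction that actually carries the content rests on a false claim.

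\textbf{The direction you call delicate is trivial.} Suppose the discrete paths meet at scaled times $s_n\to s$. Local uniform convergence then gives $\widehat r^{\x}(s)=\widehat l^{\y}(s)$, so $s$ cannot precede $\widehat\tau_{\x,\y}$. A discrete meeting can never fail to survive in the limit, and no estimate is needed for that bound.

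\textbf{The nontrivial direction is the other one.} When the limit paths touch at $\widehat\tau_{\x,\y}$, you only know the discrete paths are within $o(\epsilon_n^{-1})$ of each other. You must show they actually coincide shortly afterwards. You dispose of this by asserting that left-right Brownian motions cross immediately after meeting. That holds only when the paths start in the crossing order. Take instead $\widehat r^{\x}<\widehat l^{\y}$ initially, which is exactly the wedge configuration $W(\widehat r,\widehat l)$ whose bottom time condition (V) concerns. There, after $\widehat\tau_{\x,\y}$, the difference $\widehat l-\widehat r$ is a sticky Brownian motion reflected at $0$ (cf.\ (\ref{eq:LR_SDE})) and never changes sign. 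Moreover, for $b=0$ the limiting pair simply coalesces, so there is no crossing in any configuration. The discrete paths also never cross here ($\widehat R_n\le\widehat L_n$). So uniform convergence gives nothing: the discrete paths could in principle sustain a near-miss at distance $o(\epsilon_n^{-1})$ indefinitely.

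\textbf{What is needed to close the gap.} You need an input that sees the interaction at distance zero. The paper supplies it through occupation times. Proposition \ref{prop:ConvLeftRight_Intersec} shows that $S_n(\widehat Z^n)$, the scaled number of steps the discrete paths spend coincident, converges jointly with the paths to $\int_0^t\mathbf{1}_{\{\widehat X_1(s)=\widehat X_2(s)\}}ds$. This uses the three-environment coupling, the negligibility of the extended dependent state, and identification of the limit as a solution of (\ref{eq:LR_SDE_new}). Stickiness makes this integral strictly positive on every time window after $\widehat\tau_{\x,\y}$ (in the running direction of the dual paths). Hence $\widehat Z^n$ increases there for large $n$, which means the discrete paths have met (Remark \ref{rem:ConvLeftRight_Intersec}).

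Within your pathwise framework, the replacement would be a uniform hitting estimate from mesoscopic distance. For every $\eta>0$, the probability of no meeting within $\eta\epsilon_n^{-2}$ steps, taken uniformly over starting configurations at distance at most $\gamma\epsilon_n^{-1}$, must tend to $0$ as first $n\to\infty$ and then $\gamma\downarrow0$. You would apply this via the strong Markov property at the first time the scaled distance drops below $\gamma$. Your estimate (ii) starts from distance at most $K$, so it is not enough on its own. You would also have to handle the passage from distance $\gamma\epsilon_n^{-1}$ down to $K$, against the macroscopic drift $2\bar b$ pushing the wedge boundaries apart, while controlling the dependence between the two Howard dual paths.

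\textbf{Two smaller points.}
\begin{itemize}
\item You cannot identify $(\mathcal Z_b,\widehat{\mathcal Z}_b)$ at this stage ``by the discussion after Theorem \ref{thm:ConvNet_2}'': that identification takes the intersection-time convergence as its input.
\item For $b=0$, Theorem \ref{thm:ConvLeftRight_Jt} does not apply, so ``branching is invisible'' needs an actual argument. The paper obtains this case from Proposition \ref{prop:ConvLeftRight_Intersec}, which holds for all $b\in[0,\infty)$.
\end{itemize}
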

Because of Theorem \ref{thm:ConvNet_2}, Theorem \ref{thm:ConvLeftRight_IntersecTime_Jt} immediately gives us the following Corollary which completes the proof Theorem \ref{thm:DoubleNet_Conv}.
\begin{corollary}
\label{cor:ConvLeftRight_IntersecTime_Jt}
Consider the $(\mathcal{H}\times \widehat{\mathcal{H}}, \mathcal{B}_{\mathcal{H}\times \widehat{\mathcal{H}}})$ valued r.v. 
$(\mathcal{Z}_{b}, \widehat{\mathcal{Z}}_{b})$ as in Theorem \ref{thm:DoubleNet_Conv}. 
Then we have that 
\begin{align*}
(\mathcal{Z}_{b}, \widehat{\mathcal{Z}}_{b}) = 
\begin{cases}
(\mathcal{N}_{\bar{b}}, \widehat{\mathcal{N}}_{\bar{b}}) & \text{ if }b > 0\\
(\WW, \widehat{\WW}) & \text{ if }b = 0.
\end{cases}
\end{align*}
\end{corollary}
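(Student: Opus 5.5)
The corollary should follow by checking the hypotheses of Theorem \ref{thm:ConvNet_2} for $\Xi_n := \overline{S_n(\NN^n)}$, with $\Xi^l_n := \overline{S_n(\Xi^n_l)}$, $\Xi^r_n := \overline{S_n(\Xi^n_r)}$ and dual families $\overline{S_n(\widehat{\Xi}^n_l)}, \overline{S_n(\widehat{\Xi}^n_r)}$. The case $b>0$ is the main one; the case $b=0$ is handled separately at the end.

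\textbf{Condition (C).} Each leftmost path follows $h_{n,(l)}$ at every vertex. Any path of $\NN^n$ uses either $h_{n,(l)}$ or $h_{n,(r)}$ at each step, and the forward edges of the Howard network do not cross. Hence no net path can cross a leftmost path from right to left, and symmetrically no net path can cross a rightmost path from left to right.

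\textbf{Condition (H).} This is immediate from the definition of $\NN^n$. A net path is a concatenation of $h_{n,(l)}$ and $h_{n,(r)}$ steps, and at each switching vertex it hops between the leftmost and rightmost paths through that vertex. Those two paths intersect at a lattice time, so the switch is a hop at an intersection time.

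\textbf{Conditions (I) and (V).} Condition (I) and the tightness and non-crossing parts of (V) are provided by Theorem \ref{thm:ConvLeftRight_Jt}. The dual paths $\widehat{\Xi}^n_{l}, \widehat{\Xi}^n_r$ were constructed so as not to cross the corresponding forward paths, and their starting points become dense after scaling. The extra ingredient demanded by (V) is joint convergence of the first intersection times of forward and dual left/right paths, which is exactly what Theorem \ref{thm:ConvLeftRight_IntersecTime_Jt} supplies. Theorem \ref{thm:ConvNet_2} then identifies every subsequential limit of $(\Xi_n,\widehat{\Xi}_n)$, with $\widehat{\Xi}_n=\overline{S_n(\widehat{\NN}^n)}$, as $(\NN_{\bar b},\widehat{\NN}_{\bar b})$. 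The drift $\bar b$ is the one produced in the limit of Theorem \ref{thm:ConvLeftRight_Jt}.

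I also need $\overline{S_n(\widehat{\NN}^n)}$ to coincide with the hopping closure of the dual left-right families used in (V). This follows from the same observation as for (H), applied to the dual steps $\widehat{h}_{n,(l)},\widehat{h}_{n,(r)}$.

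\textbf{The case $b=0$.} Here $\beta_n=o(\epsilon_n)$, so the left and right paths should asymptotically have no drift and should stick together. I would use Theorem \ref{thm:ConvLeftRight_IntersecTime_Jt} with limit $(\WW,\widehat{\WW})$ for both the left and right webs. Then I would show that in any subsequential limit, $l^{\z}=r^{\z}$ for all $\z$ in a countable dense set. Heuristically, branching on a time window of order $\epsilon_n^{-2}$ produces a separation whose expected size is $\beta_n\epsilon_n^{-2}\cdot\epsilon_n = o(\epsilon_n^{-1})$ in unscaled units, that is, $o(1)$ after scaling.

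With $l^{\z}=r^{\z}$ in the limit, every net path is sandwiched between leftmost and rightmost paths from its starting point. The limit $\mathcal{Z}_0$ therefore contains $\WW$ and lies inside it, since it does not enter any dual wedge. The wedge characterisation in Theorem \ref{thm:ConvBweb_Noncrossing}, applied with the dual wedges, then gives $(\mathcal{Z}_0,\widehat{\mathcal{Z}}_0)=(\WW,\widehat{\WW})$.

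\textbf{Main obstacle.} The hard step is that the identification rests entirely on Theorem \ref{thm:ConvLeftRight_IntersecTime_Jt}; the corollary itself mostly amounts to checking the hypotheses. The delicate remaining point is the case $b=0$. If Theorem \ref{thm:ConvNet_2} is not directly applicable there, I would first try to show that the sup-distance between $r^{\z}_n$ and $l^{\z}_n$ on compact time windows tends to zero in probability, using the $o(\epsilon_n)$ branching rate.
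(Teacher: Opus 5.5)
Your proposal follows the paper's route: for $b>0$ the corollary is just Theorem \ref{thm:ConvNet_2}, with (C) and (H) read off from the lattice construction, (I) and tightness from Theorem \ref{thm:ConvLeftRight_Jt}, and the first-intersection-time convergence required in (V) supplied by Theorem \ref{thm:ConvLeftRight_IntersecTime_Jt}. For $b=0$ the paper runs the same argument in its drift-zero form: the left-right limits in Theorem \ref{thm:ConvLeftRight_IntersecTime_Jt} are then coalescing Brownian motions, so $l^{\z}=r^{\z}$ holds automatically and your separation heuristic is unnecessary; also, the wedge non-entry for the crossing family $\NN^n$ should come from that intersection-time convergence together with Theorem \ref{thm:BwebWedge}, not from Theorem \ref{thm:ConvBweb_Noncrossing}, which applies only to non-crossing families.
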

Theorem \ref{thm:ConvLeftRight_IntersecTime_Jt} will be proved in Section \ref{sec:ConvDoubleNet}.


\section{Evolution of dual Howard paths}
\label{sec:DualPath_Evolution}

\subsection{Evolution of a single dual Howard path}
\label{subsec:DualPath_Evolution_Marginal}

 
In this section we first discuss the construction of the dual path in more detail.
We will discuss the construction of three types of dual paths or dual steps. 
Recall that for $(x,t) \in \widehat{V} \subseteq  \Z/2\times \Z$ 
and for $k \geq 1$ the notation $\widehat{h}^{k}(x,t)$
denotes the $k$-th step for the general dual path starting from $(x,t)$. On the other hand, 
$\widehat{h}^{k}_{n, (r)}(x,t)$ and $\widehat{h}^{k}_{n, (l)}(x,t)$ respectively denote 
the $k$-th right and left dual step corresponding to the branching probability 
$\beta_n$. If $n$ is fixed then for the $n$-th scaled  model we denote $\widehat{h}^{k}_{n, (r)}(x,t), 
\widehat{h}^{k}_{n, (l)}(x,t)$ simply as $\widehat{h}^{k}_{(r)}(x,t), 
\widehat{h}^{k}_{(l)}(x,t)$. Further, for consistency 
we use the notation $\widehat{h}^{k}_{(o)}(x,t)$ to denote the $k$-th `ordinary'
dual step $\widehat{h}^{k}(x,t)$.
For $\alpha \in \{ l, r, o \}$, the `$\alpha$'-type dual step is denoted as  
$\widehat{h}^{k}_{(\alpha)}(x,t)$.  

For $(x,t) \in \widehat{V}$ and for $\alpha \in \{ l, r, o \}$,
 in order to describe the $\alpha$-type dual step $\widehat{h}_{(\alpha)}(x,t)$ we consider the following 
situations:
\begin{itemize}
\item[(i)] {\bf Case 1:} If $x \notin \Z$ then for any $\alpha \in \{ l, r, o \}$ we have
$$
\widehat{h}_{(\alpha)}(x,t) =  \bigl (l(x,t-1) + r(x,t-1) \bigr )/2 =
(x,t-1) + ((J^+_{(x-1/2,t-1)} - J^-_{(x+1/2,t-1)})/2, 0).
$$
We observe that for the step $\widehat{h}_{(\alpha)}(x,t)$ we have 
$\widehat{h}_{(\alpha)}(x,t)(1) = x + Y^{(1)}$ where the increment
random variable $\widehat{Y}^{(1)}$ does not depend on $\alpha \in \{ l, r, o\}$
and it is given by  
\begin{align}
\label{eq:DualPathIncrement_1}
Y^{(1)} \stackrel{d}{=}  \bigl (J^+_{(x-1/2,t-1)} - J^-_{(x + 1/2,t-1)}  \bigr )/2 
\stackrel{d}{=} ( J_1 - J_2 )/2 ,
\end{align}
where $J_1, J_2 $ are i.i.d. copies of geometric random variable $J$ whose distribution is described in (\ref{def:Geometric}).
Clearly, we have $\E(\widehat{Y}^{(1)}) = 0$.

\item[(ii)] {\bf Case 2:}  The case $x \in \Z$ is considered in several sub-cases.
We first consider either $x \in \Z $ but $(x,t-1) \notin V$ or 
$(x,t-1) \in V$ with $\text{Br}^n_{(x,t-1)} = 0$. In either of these two cases, we 
must have 
$$
\widehat{h}_{(r)}(x,t-1) = \widehat{h}_{(l)}(x,t-1) = \widehat{h}_{(o)}(x,t-1),
$$
and the dual step $h_{(o)}(x,t-1)$ is given by: 
\begin{align*}
\widehat{h}_{(o)}(x,t) = 
\begin{cases}
\bigl ( r(x,t-1) + l(x,t-1)  \bigr )/2 & \text{ if }x \in \Z \text{ but } (x,t-1) \notin V \\
\bigl ( (x,t-1) + l(x,t-1)  \bigr )/2 & \text{ if }(x,t-1) \in V, R_{(x,t-1)} = + 1 \\
\bigl ( (x,t-1) + r(x,t-1)  \bigr )/2 & \text{ if }(x,t-1) \in V, R_{(x,t-1)} = -1 .
\end{cases}
\end{align*}
We denote the increment random variable simply as $Y^{(2)}$ which 
does not depend on $\alpha \in \{ l, r, 0\}$ and it is given by 
\begin{align}
\label{def:DualIncrement_2}
Y^{(2)} := 
\begin{cases}
 (J^+_{(x,t-1)} - J^-_{(x,t-1)})/2 & \text{ if } x \in \Z \text{ but } (x,t-1) \notin V \\
 -J^-_{(x,t-1)}/2  & \text{ if } (x,t-1) \in V, R_{(x,t-1)} = + 1 \\
J^+_{(x,t-1)}/2  & \text{ if } (x,t-1) \in V, R_{(x,t-1)} = - 1 .
\end{cases}
\end{align}
It is not difficult to see that in this case also, 
we have $\E( Y^{(2)}) = 0$ and for any $\ell \in \Z/2$ 
$$
\P( Y^{(2)} = \ell ) = 
(1-p)\P( J_1 - J_2 = 2\ell ) + p\bigl ( \mathbf{1}_{\{ \ell > 0\}} \P( J_1 = 2\ell )
+ \mathbf{1}_{\{\ell < 0\}} \P( J_2 = -2\ell ) \bigr ),
$$ 
where $J_1, J_2$ are i.i.d. copies of $J$ (defined as in (\ref{def:Geometric}))
and $\mathbf{1}_{A}$ denotes the indicator random variable 
corresponding to the event $A$.

We observe that on the event 
$\{\widehat{h}_{(\alpha)}(y,s) = (x,t) \in \Z^2, (x,t-1)\in V\}$
we also have $(x,t-1) \in V_{\text{sp}}$ and a branching event for the forward path at 
$(x,t-1)$, i.e., the occurrence of the event $\{\text{Br}^n_{(x,t-1)} = 1\}$ triggers a branching event 
for the dual path at $(x,t)$ as well. Therefore, if $(x,t-1) \in V$ 
with $\text{Br}^n_{(x,t-1)} = 1$, then the increment random variables 
$$
Y^{(2)}_{(l)} := \widehat{h}_{(l)}(x,t)(1) - \widehat{h}_{(l-1)}(x,t)(1) \text{ and } 
Y^{(2)}_{(r)} := \widehat{h}_{(r)}(x,t) - \widehat{h}_{(r)}(x,t)(1)
$$ 
are given by 
\begin{align*}
(Y^{(2)}_{(l)}, Y^{(2)}_{(r)}) = (J^+_{(x,t-1)}/2, J^-_{(x,t-1)}/2) = 
\bigl ( Y^{(2)} + D^{(l)}_{(x,t)} , Y^{(2)} + D^{(r)}_{(x,t)} \bigr ) ,
\end{align*}
where the `drift' adjustment random variables 
$D^{(l)}_{(x,t)}$ and $D^{(r)}_{(x,t)}$ are respectively given by 
\begin{align*}
D^{(l)}_{(x,t)} :=
\begin{cases}
0 & \text{ if }R_{(x,t-1)} = - 1,  \\
(J^+_{(x,t-1)} + J^-_{(x,t-1)})/2 & \text{ if }R_{(x,t-1)} = + 1 \text{ and }
\end{cases}
\end{align*}
 \begin{align*}
D^{(r)}_{(x,t)} :=
\begin{cases}
0 & \text{ if }R_{(x,t-1)} = + 1 \\
-(J^-_{(x,t-1)} + J^+_{(x,t-1)})/2 & \text{ if }R_{(x,t-1)} = - 1 .
\end{cases}
\end{align*}
Note that here we have assumed that $(x,t-1) \in V$ with $\text{Br}^n_{(x,t-1)} = 1$.
More generally, for any $(x,t) \in \widehat{V}$ with $x \in \Z$, 
the definition of the drift random variables can be generalised as  
\begin{align}
\label{def:DriftIncrement}
( D^{(l)}_{(x,t)}, D^{(r)}_{(x,t)}) :=
\begin{cases}
(0,0) & \text{ if either } (x,t-1) \notin V  
\text{ or }\text{Br}^n_{(x,t-1)} = 0,  \\
\bigl ( (J^+_{(x,t-1)} + J^-_{(x,t-1)})/2, 0 \bigr ) & \text{ if }(x,t-1) \in V, 
\text{Br}^n_{(x,t-1)} = 1, R_{(x,t-1)} = + 1 \\
\bigl ( 0, -(J^-_{(x,t-1)} + J^+_{(x,t-1)})/2 \bigr ) & \text{ if }(x,t-1) \in V, 
\text{Br}^n_{(x,t-1)} = 1, R_{(x,t-1)} = - 1 .
\end{cases}
\end{align}
We highlight that for $\alpha \in \{l,r\}$, the increment random variable 
$Y^{(2)}_{(\alpha)}$ and the drift random variable $D^{(\alpha)}_{(x,t)}$ is defined {\it only when} $x \in \Z$.
Further, from the above description it is not difficult to observe that for $(x,t) \in \widehat{V}$ with $x \in \Z$ we have 
\begin{align}
\label{eq:DriftExpectation}
\E( D^{(l)}_{(x,t)}) = - \E( D^{(r)}_{(x,t)}) 
& = \P((x,t-1) \in V, \text{Br}^n_{(x,t-1)} = 1,R_{(x,t-1)} = +1)\times 
\E((J^+_{(x,t-1)} + J^-_{(x,t-1)})/2) \nonumber \\
& = \P((x,t-1) \in V, \text{Br}^n_{(x,t-1)} = 1,R_{(x,t-1)} = +1)\times \E(J) \nonumber \\
& = \beta_n/2~.
\end{align}

\end{itemize}
 
The construction discussed before readily gives us the following proposition. 
The proof is straightforward and we leave it to the reader.
\begin{proposition}
\label{prop:GenDualPath_Markov}
For $(x,t) \in \widehat{V}$ and for any $\alpha \in \{ l, r, o\}$, 
the process $\{ \widehat{h}^k_{(\alpha)}(x,t)(1) : k \geq 0\}$ is a time homogeneous $\Z/2$ 
valued Markov chain which is given by 
\begin{align*}
M^{k+1}_{(\alpha)} & \stackrel{d}{=} 
\begin{cases} M^{k}_{(\alpha)} + 
 \mathbf{1}_{\{ M^{k}_{(\alpha)} \notin \Z\}} Y^{(1)}_{k+1} + 
\mathbf{1}_{\{ M^{k}_{(\alpha)} \in \Z\}} Y^{(2)}_{k+1} & \text{ if }\alpha = o \\
M^{k}_{(\alpha)} + 
 \mathbf{1}_{\{ M^{k}_{(\alpha)} \notin \Z\}} Y^{(1)}_{k+1} + 
\mathbf{1}_{\{ M^{k}_{(\alpha)} \in \Z\}} (Y^{(2)}_{k+1} +  
D_{(\alpha),k+1})  & \text{ if }\alpha =  l, r 
\end{cases} 
\end{align*} 
where $\{ Y^{(1)}_{k}, Y^{(2)}_{k} : k \geq 1\}$ are i.i.d.  
copies of $Y^{(1)}, Y^{(2)}$ respectively defined as in (\ref{eq:DualPathIncrement_1}), (\ref{def:DualIncrement_2}).
\end{proposition}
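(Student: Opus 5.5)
The plan is to realise the dual path as a deterministic function of the current position and the random environment on the next row. Then the Markov property and the time homogeneity follow from two facts. First, the rows $\{\Lambda_{(z,t-k-1)} : z \in \Z\}$ are i.i.d. across $k$. Second, the law of the environment is translation invariant in space.

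Fix $\alpha \in \{l,r,o\}$ and write $M^k_{(\alpha)} := \widehat{h}^k_{(\alpha)}(x,t)(1)$, so that $\widehat{h}^k_{(\alpha)}(x,t) = (M^k_{(\alpha)}, t-k)$. Let $\mathcal{F}_k := \sigma\bigl(\Lambda_{(z,s)} : z \in \Z, s \geq t-k\bigr)$.

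\textbf{Step 1 (adaptedness).} The explicit descriptions in Case 1 and Case 2 give a measurable map $\Phi_\alpha$ with
$$
M^{k+1}_{(\alpha)} = \Phi_\alpha\bigl(M^k_{(\alpha)}, \{\Lambda_{(z,t-k-1)} : z\in\Z\}\bigr).
$$
This holds because $J^\pm$ at level $t-k-1$, membership in $V$, $R$ and $\text{Br}^n$ at level $t-k-1$ all depend only on row $t-k-1$. By induction, $M^k_{(\alpha)}$ is $\mathcal{F}_k$-measurable. Row $t-k-1$ is independent of $\mathcal{F}_k$, since the vectors $\Lambda$ are i.i.d.

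The dual vertex $(x,t) \in \widehat{V}$ is itself determined by row $t$. To avoid conditioning issues, I will work with the path started from a fixed point $(x,t)$, for which all the formulas make sense. Restricting to the event $\{(x,t)\in\widehat{V}\}$ is harmless because that event lies in $\mathcal{F}_0$.

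\textbf{Step 2 (Markov property and transition law).} By independence and the freezing lemma, conditionally on $\mathcal{F}_k$ with $M^k_{(\alpha)} = m$, the law of $M^{k+1}_{(\alpha)}$ equals the law of $\Phi_\alpha(m, \text{row})$ for an independent row. This gives the Markov property.

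Spatial translation invariance of the i.i.d. field shows that $\Phi_\alpha(m,\text{row}) - m$ has a law depending only on whether $m\in\Z$; the level $t-k-1$ plays no role, which gives time homogeneity.

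I then identify the two laws:
\begin{itemize}
\item For $m\notin\Z$, the increment is $(J^+_{(m-1/2,\cdot)} - J^-_{(m+1/2,\cdot)})/2$. The two geometric variables involved look at disjoint sets of sites, so they are independent, and the increment has the law of $Y^{(1)}$ in (\ref{eq:DualPathIncrement_1}).
\item For $m\in\Z$ and $\alpha = o$, the three-case formula gives $Y^{(2)}$ as in (\ref{def:DualIncrement_2}).
\item For $\alpha\in\{l,r\}$ and $m \in \Z$, the increment is $Y^{(2)} + D^{(\alpha)}$, with $D^{(\alpha)}$ as in (\ref{def:DriftIncrement}), built from the same row.
\end{itemize}
Taking fresh independent copies $Y^{(1)}_{k+1}, Y^{(2)}_{k+1}, D_{(\alpha),k+1}$ at each step yields the stated recursion in distribution.

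\textbf{Main point of care.} The delicate step is the $m\in\Z$ case, where $Y^{(2)}$ and $D^{(\alpha)}$ are dependent: both use $J^\pm_{(m,\cdot)}$ and $R_{(m,\cdot)}$. The proposition must therefore be read with $(Y^{(2)}_{k+1}, D_{(\alpha),k+1})$ as an i.i.d. sequence of jointly distributed pairs, not as independent coordinates.

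I would also check that the relevant functionals of the row are well defined. This holds a.s., since open sites exist on both sides a.s. by $p>0$.
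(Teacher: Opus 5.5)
Your proposal is correct and follows the argument the paper has in mind (the paper omits the proof, saying only that it follows from the construction). Each $\alpha$-type dual step is a measurable function of the current position and the independent row below, so the Markov property comes from independence of the rows and time homogeneity from translation invariance of the i.i.d. field. Your remark that $(Y^{(2)}_{k+1}, D_{(\alpha),k+1})$ must be read as an i.i.d. sequence of dependent pairs, built from the same row, is the right interpretation of the statement.
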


From the construction of the (ordinary) dual path, we also have the following proposition, 
which was proved earlier in \cite{RSS16B}.
  
\begin{proposition}[Proposition 2.3 of \cite{RSS16B}]
\label{prop:DualPathMartingale}
For any $(y,s) \in \widehat{V}$, the process $\{ M^{k}_{(o)}(y,s) : k \geq 0  \}$ 
is $L^2$-martingale with respect to the filtration 
\begin{align}
\label{def:Filtration}
\{ \mathcal{F}_k := \sigma \bigl( B_{(x,t)}, R_{(x,t)} : (x,t) \in \Z^2, 
s - k \leq t \leq s \bigr ) : k \geq 0\}.
\end{align}
\end{proposition}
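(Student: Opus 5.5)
To prove Proposition \ref{prop:DualPathMartingale}, I would first show that $\{ M^k_{(o)}(y,s) : k\geq 0\}$ is adapted to $\{\mathcal{F}_k\}$ and that each increment is square integrable. Then I would verify the martingale identity $\E(M^{k+1}_{(o)} - M^k_{(o)} \mid \mathcal{F}_k) = 0$.

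Adaptedness comes straight from the construction in Section \ref{sec:Howard_Dual}. The dual step $\widehat{h}(y^\prime,s^\prime)$ is a function of three things: $a^l(y^\prime,s^\prime)$, $a^r(y^\prime,s^\prime)$, and whether $(y^\prime,s^\prime-1)\in V$. The points $a^l, a^r$ are determined by the forward steps $h(z,s^\prime-1)$. Each such step depends only on $B$ and $R$ at levels $s^\prime-1$ and $s^\prime$. By induction, $M^{k}_{(o)}$ is therefore measurable with respect to $\{B_{(x,t)},R_{(x,t)} : s-k\leq t\leq s\}$, which is $\mathcal{F}_k$.

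For square integrability, the increments are bounded by $|Y^{(1)}|$ or $|Y^{(2)}|$. These are bounded in turn by sums of geometric random variables, which have all moments. So $\E (M^k_{(o)})^2 \le k\sum_{j\le k}\E(Y_j^2)<\infty$.

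The key step is the conditional mean zero property. Given $\mathcal{F}_k$, the current position $x=M^k_{(o)}$ is known. The next increment is determined by the configuration on level $s-k-1$ together with the level $s-k$ variables already in $\mathcal{F}_k$.

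The case structure in Proposition \ref{prop:GenDualPath_Markov} shows that the increment depends only on $\{B,R\}$ at level $s-k-1$. This is the step I expect to be the main subtlety. One has to check that the level-$(s-k)$ information does not bias the increment. In Case 1 and Case 2 the increment is written purely through $J^\pm$ at level $s-k-1$ and $R_{(x,s-k-1)}$. The worry is that $a^l,a^r$ are defined via forward steps into level $s-k$. The resolution is that, since $(x,s-k)$ is a dual vertex sitting midway between consecutive open vertices, the forward edges crossing past $x$ are determined by the nearest open sites at level $s-k-1$, and these yield exactly the formulas for $Y^{(1)}, Y^{(2)}$.

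Since level $s-k-1$ is independent of $\mathcal{F}_k$, the conditional law of the increment equals the law of $Y^{(1)}$ (if $x\notin\Z$) or $Y^{(2)}$ (if $x\in\Z$). Both have mean zero. For $Y^{(1)}$ this is the symmetry of $J_1-J_2$. For $Y^{(2)}$, the part coming from $(x,s-k-1)\notin V$ is symmetric. The part coming from $(x,s-k-1)\in V$ has the two branches $-J^-/2$ and $J^+/2$, each with probability $1/2$ since $R$ is Rademacher, and their contributions cancel.

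Combining these steps gives the martingale property, with the $L^2$ bound established above.
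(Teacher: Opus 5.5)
Your proof is correct and is essentially the argument the paper relies on: it cites \cite{RSS16B} rather than reproving the statement, and it derives from the construction that the increment from $M^k_{(o)}$ is a function of level $s-k-1$ alone (independent of $\mathcal{F}_k$), distributed as $Y^{(1)}$ or $Y^{(2)}$, each with mean zero and finite second moment. The one step worth phrasing more precisely is this: because $M^k_{(o)}$ is the midpoint of two consecutive open sites at level $s-k$, every open site at level $s-k-1$ strictly to its left (right) has its forward edge landing strictly to its left (right), with a tie broken by $R$ only at $M^k_{(o)}$ itself, and this is what reduces $a^l,a^r$ to the nearest open neighbours at level $s-k-1$.
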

We define the process 
\begin{align}
\label{def:Y_Markov}
\{ Z^{(\alpha)}_k := \mathbf{1}_{\{M^{k}_{(\alpha)}(y,s) \in \Z\}} : j \geq 0 \}.
\end{align}
The following proposition describes the behaviour of successive 
hitting times of points in $\Z$ by $\alpha$-type dual process Markov chain 
$\{ M^{k+1}_{(\alpha)} : k \geq 0 \}$.
  
\begin{proposition}
\label{prop:IntegerNonintegerMarkov}
For $\alpha \in \{ l, r, o\}$, the process $\{ Z^{(\alpha)}_j  : j \geq 0 \}$ is a time homogeneous $\{0,1\}$-valued Markov chain with the transition probability matrix given by 
\[\begin{pmatrix}
 \frac{(2-2p+p^2)}{(2-p)^2} & \frac{2(1-p)}{(2-p)^2} \\
 \frac{(2-2p+p^2)}{(2-p)^2} & \frac{2(1-p)}{(2-p)^2} 
\end{pmatrix} .\]
\end{proposition}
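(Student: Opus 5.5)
The plan is to compute, for each current state, the conditional probability that the next dual position is an integer, and to check that it equals $\frac{2(1-p)}{(2-p)^2}$ whatever the current state and the past. This gives both the Markov property and the fact that the two rows of the matrix coincide. The starting point is Proposition \ref{prop:GenDualPath_Markov}. Write $M^k := M^k_{(\alpha)}$ and suppose $M^k = x$ at level $s-k$. The step to level $s-k-1$ uses only $\Lambda_{(\cdot, s-k-1)}$: the row of Bernoulli, Rademacher and branching variables at level $s-k-1$. The past $(M^0,\dots,M^k)$ is measurable with respect to the rows at levels $\geq s-k$, and the rows are i.i.d. and independent across levels. So, conditionally on the past, the increment has the law described in Case 1 or Case 2 with $x$ frozen. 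Hence $Z^{(\alpha)}_{k+1}$ is conditionally independent of the past given $Z^{(\alpha)}_k$, and we only need the two transition probabilities.

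\textbf{Parity facts.} For $J$ geometric as in (\ref{def:Geometric}),
$$\P(J \text{ odd}) = \sum_{m\geq 0} p(1-p)^{2m} = \frac{1}{2-p}, \qquad \P(J\text{ even}) = \frac{1-p}{2-p}.$$
For $J_1,J_2$ i.i.d. copies of $J$ this gives
$$\P(J_1-J_2 \text{ even}) = \frac{1+(1-p)^2}{(2-p)^2} = \frac{2-2p+p^2}{(2-p)^2}, \qquad \P(J_1-J_2\text{ odd}) = \frac{2(1-p)}{(2-p)^2}.$$

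\textbf{Case $x\notin\Z$.} Here $x\in \Z+1/2$ and the increment is $(J_1-J_2)/2$, by (\ref{eq:DualPathIncrement_1}). The new position is an integer iff $J_1-J_2$ is odd, which has probability $\frac{2(1-p)}{(2-p)^2}$. Here $J_1,J_2$ are $J^+_{(x-1/2,\cdot)}$ and $J^-_{(x+1/2,\cdot)}$, which are independent as noted in Subsection \ref{subsec:Howardmodel}.

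\textbf{Case $x\in\Z$, $(x,s-k-1)\notin V$.} This has probability $1-p$. The increment is $(J^+-J^-)/2$, and the new position is an integer iff $J^+-J^-$ is even.

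\textbf{Case $x\in\Z$, $(x,s-k-1)\in V$.} This has probability $p$. The increment is $\pm J^{\mp}/2$: for $\alpha=o$ the sign is chosen by $R$, and for $\alpha\in\{l,r\}$ with $\text{Br}^n=1$ it is $+J^+/2$ or $-J^-/2$. In every subcase the new position is an integer iff a single geometric variable is even. The choice of which $J$ is used depends only on $R$ and $\text{Br}^n$, which are independent of the $J$'s, so this has probability $\frac{1-p}{2-p}$. The drift variables in (\ref{def:DriftIncrement}) only change which of $J^\pm$ appears, not the parity law.

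\textbf{Combining.} For $x\in\Z$,
$$(1-p)\frac{2-2p+p^2}{(2-p)^2} + p\,\frac{1-p}{2-p} = \frac{(1-p)(2-2p+p^2+2p-p^2)}{(2-p)^2} = \frac{2(1-p)}{(2-p)^2},$$
which is the same value as in the case $x\notin\Z$. This gives both rows of the stated matrix, for all $\alpha\in\{l,r,o\}$.

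The main point needing care is the Markov/independence claim. One must check that conditioning on $\{M^k=x\}$ and on the past does not bias the row at level $s-k-1$, including the event $(x,s-k-1)\in V$ and the tie structure. This follows because the path up to step $k$ is a function of strictly higher rows only.
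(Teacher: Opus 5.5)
Your proposal is correct and follows essentially the same route as the paper: split according to whether the current dual position is in $\Z$, then reduce each case to the parity of one geometric gap or of a difference of two independent gaps, which gives probability $\frac{2(1-p)}{(2-p)^2}$ of landing in $\Z$ in both cases. You also spell out two points the paper leaves implicit: the level-by-level independence that yields the Markov property, and the branching sub-case for $\alpha\in\{l,r\}$.
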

\begin{proof}
Fix $(y,s) \in \widehat{V}$ and take $j \geq 1$. 
Suppose $\widehat{h}^{j}_{(\alpha)}(y,s)(1) = u + 1/2$ for some $u \in \Z$, we recall 
that the $\alpha$-type dual step does not depend on $\alpha \in \{ l, r, o\}$. In 
order to have $\widehat{h}^{j+1}_{(\alpha)}(y,s)(1) \notin \Z$  we require that 
$J^+_{(u,s-j-1)} - J^-_{(u+1,s-j-1)} \in 2\Z$.
For any $j \geq 1$ using the fact that $J^+_{(u,s -j-1)}$ and $J^-_{(u+1,s-j-1)}$
are i.i.d. copies of geometric random variable $J$, we have 
\begin{align*}
 & \P(Y_{j+1} = 0| Y_{j} = 0) \\
 & = \P(J^+_{(u, s-j-1)} - J^-_{(u+1,s-j-1)} \in 2\Z) \\
& = \P(J^+_{(u,s-j-1)}, J^-_{(u+1,s-j-1)} \in 2\Z ) + 
\P(J^+_{(u,s-j-1)},J^-_{(u+1,s-j-1)} \notin 2\Z)\\
& = (\P( J \in 2\Z))^2 + (\P( J \notin 2\Z))^2 \\
& = (2-2p+p^2)/(2-p)^2 .
\end{align*}

Suppose $\widehat{h}^{j}_{(\alpha)}(y,s)(1) = u $ for some $u \in \Z$. Then 
in order to have $\widehat{h}^{j+1}_{(\alpha)}(y,s)(1) \in \Z$  we need to consider three cases:
\begin{itemize}
 \item[(i)] if $(u,s-j-1) \in V$ and $h(u,s-j-1)(1) > u$ then we require 
 $J^-_{(u,s-j-1)} \in 2\Z$, 
 \item[(ii)] if $(u,s-j-1) \in V$ and $h(u,s-j-1)(1) < u$ then we require 
 $J^+_{(u,s-j-1)} \in 2\Z$ and
 \item[(iii)] if $(u,s-j-1) \notin V$ then we require 
 $J^+_{(u,s-j-1)} - J^-_{(u,s-j-1)} \in 2\Z$.
\end{itemize}
Recall that $J^+_{(u,s-j-1)}$ and $J^-_{(u,s-j-1)}$ are i.i.d. copies 
of the geometric random variable $J$ and we have that
\begin{align*}
& \P(Y_{j+1} = 1| Y_{j} = 1) \\
& = p/2(\P(J^+_{(u,s-j-1)} \in 2\Z) + \P(J^-_{(u,s-j-1)} \in 2\Z)) + \\
& \qquad \qquad (1-p)( \P(J^+_{(u,s-j-1)} , J^-_{(u,s-j-1)} \in 2\Z) + 
\P(J^+_{(u,s-j-1)} , J^-_{(u,s-j-1)} \notin 2\Z)) \\
& = p\P(J \in 2\Z) + (1-p)((\P( J \in 2\Z))^2 + (\P( J \notin 2\Z))^2) \\
& = 2(1-p)/(2-p)^2 .
\end{align*}
This completes the computation for the transition probability matrix.  
%
%
\end{proof}
For $(y,s) \in \widehat{V}$, set $\beta^{(\alpha)}_0  = \beta^{(\alpha)}_0(y,s) = 0$ 
and for $\ell \geq 1$ let $\beta^{(\alpha)}_\ell$ denote the $\ell$-th hitting time of $\Z$ by an $\alpha$-type dual path starting from $(y,s)$,  defined as,
\begin{align}
\label{def:Lattice_RenewalSteps}
\beta^{(\alpha)}_\ell = \beta^{(\alpha)}_\ell(y,s) := \inf\{ j > 
\beta^{(\alpha)}_{\ell - 1} : \widehat{h}^{j}_{(\alpha)}(y,s)(1) \in \Z \} = \inf\{ j > \beta^{(\alpha)}_{\ell - 1} : Z^{(\alpha)}_j = 1\}. 
\end{align}
The next proposition shows that these $\beta$ steps are actually 
renewal steps for respective $\alpha$-type dual path processes.
We define
 \begin{align}
 \label{eq:Renewal_Increment}
(X^{\ell+ 1}_{(\alpha)}, T^{\ell+ 1}_{(\alpha)}) := 
\bigl ( \widehat{h}^{\beta_{\ell+1}}_{(\alpha)}(y,s)(1) -  
 \widehat{h}^{\beta_\ell}_{(\alpha)}(y,s)(1),
 \widehat{h}^{\beta_{\ell+1}}_{(\alpha)}(y,s)(2) - 
 \widehat{h}^{\beta_\ell}_{(\alpha)}(y,s)(2) \bigr ).
\end{align}    

\begin{proposition}
\label{prop:DualRenewalSteps}
For any $(y,s) \in \widehat{V}$ and for any $\alpha \in \{ l, r, o\}$, 
we have the following:
\begin{itemize}
\item[(i)] For all $\ell, j \geq 1$ we have 
\begin{align}
\label{eq:BetaExpDecay}
\P( \beta^{(\alpha)}_{\ell + 1} - \beta^{(\alpha)}_{\ell} = j) = 
\bigl( \frac{(2 - 2p + p^2)}{(2-p)^2} \bigr )^{j-1}\frac{2(1-p)}{(2-p)^2}~.
\end{align}
This immediately gives that for any $\ell \geq 1$ we have 
\begin{align}
\label{eq:ExpecRenewalTime}
\E (\beta^{(\alpha)}_{\ell + 1} - \beta^{(\alpha)}_{\ell}) = \frac{(2-p)^2}{2(1-p)}.
\end{align}

\item[(ii)] For any $\alpha \in \{ l, r, o\}$, the evolution of the process 
 between successive $\beta^{(\alpha)}$ steps gives rise to i.i.d. renewal blocks 
 and we have 
 \begin{align*}
\bigl \{ (X^{\ell+ 1}_{(\alpha)}, T^{\ell+ 1}_{(\alpha)}) : \ell \geq 1 \bigr \},
\end{align*} 
gives an i.i.d. collection of random vectors taking values in $\Z \times \N$. 
\end{itemize}
\end{proposition}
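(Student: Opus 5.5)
Part (i) follows from Proposition \ref{prop:IntegerNonintegerMarkov} combined with the strong Markov property of the dual chain. Both rows of the transition matrix of $\{Z^{(\alpha)}_j\}$ coincide. So, conditionally on the entire past, the indicators $Z^{(\alpha)}_{j}$, $j\geq 1$, are i.i.d. Bernoulli with success probability $q:=2(1-p)/(2-p)^2$.

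To make this rigorous I will prove a slightly stronger fact. At every step, whatever the current position of the $\alpha$-type chain, the conditional probability that the next position is an integer, given $\mathcal{F}_k$ (enlarged to contain the $\text{Br}^n$ variables), equals $q$. The computation in the proof of Proposition \ref{prop:IntegerNonintegerMarkov} gives exactly this, since it only uses the fresh variables at level $s-k-1$. For $\alpha\in\{l,r\}$ one extra remark is needed. When branching occurs at an integer position $u$ with $(u,s-k-1)\in V$, the $l$/$r$ step moves to $u\pm J^{\pm}/2$. The parity requirement then involves $J^{+}$ or $J^{-}$ alone, chosen by an independent fair coin (through $R$) and independent of $\text{Br}^n$. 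Hence the conditional probability is again $p\P(J\in 2\Z)+(1-p)[\P(J\in2\Z)^2+\P(J\notin2\Z)^2]=q$. Given this, $\beta_{\ell+1}-\beta_\ell$ is the waiting time for the first success in i.i.d. Bernoulli$(q)$ trials, which yields (\ref{eq:BetaExpDecay}) and its mean $1/q=(2-p)^2/(2(1-p))$.

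For part (ii), the key point is that the dual step from $(x,t)$ is a measurable function of the variables $\Lambda$ on the single row $t-1$, together with the current position. Its law, after translating by $x$, depends only on whether $x\in\Z$. By Proposition \ref{prop:GenDualPath_Markov} the chain is a space-time homogeneous random walk on $\Z/2$ whose increment law depends only on $\mathbf{1}_{\{x\in\Z\}}$. Each $\beta^{(\alpha)}_\ell$ is a stopping time for the filtration generated by the rows visited so far, since the path moves one level down per step. At $\beta_\ell$ the chain sits on $\Z$, so the type of its position is fixed.

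By the strong Markov property, the post-$\beta_\ell$ increments $\{\widehat h^{\beta_\ell+m}_{(\alpha)}(1)-\widehat h^{\beta_\ell}_{(\alpha)}(1): m\geq 0\}$ are independent of $\mathcal{F}_{\beta_\ell}$. Their law is that of the chain started from $0\in\Z$. Concretely, they are driven by the i.i.d. collection $\{\Lambda_{(x,t)}: t< s-\beta_\ell\}$ translated to the point $\widehat h^{\beta_\ell}_{(\alpha)}(y,s)$, and translation invariance of the i.i.d. field gives the equality in law. Hence the blocks $(X^{\ell+1}_{(\alpha)},T^{\ell+1}_{(\alpha)})$, $\ell\geq1$, are i.i.d. They take values in $\Z\times\N$ because both endpoints lie in $\Z$ and $T\geq1$. (Strictly, $T$ is the time elapsed; the second coordinate decreases, so I will read $T$ as the absolute value of that difference.)

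The only delicate step is justifying the strong Markov/translation claim for the field. A dual step from a half-integer $x$ uses $J^{+}_{(x-1/2,t-1)}$ and $J^{-}_{(x+1/2,t-1)}$, which may scan arbitrarily far along row $t-1$. Rows are never reused, however, because each step goes to a new lower row. So the whole past up to $\beta_\ell$ is measurable with respect to rows $\geq s-\beta_\ell$, and the future only uses rows $< s-\beta_\ell$. This row-disjointness, together with translation invariance, settles independence and identical distribution, so I expect no real obstacle beyond writing it carefully.
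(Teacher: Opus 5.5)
Your proposal is correct and follows the paper's route. For part (i), both rows of the transition matrix in Proposition \ref{prop:IntegerNonintegerMarkov} coincide, so each step lands in $\Z$ with probability $2(1-p)/(2-p)^2$ independently of the past; you state this more carefully than the paper, as a conditional probability given the rows already explored. For part (ii), you use the Markov property at the renewal steps plus translation invariance, and your row-disjointness remark supplies the detail the paper leaves implicit. One harmless slip: when branching occurs, the $l$-step uses $J^{+}$ and the $r$-step uses $J^{-}$ deterministically, not via $R$; since $J^{+}\stackrel{d}{=}J^{-}$ and both are independent of $(R,\text{Br}^n)$, the parity probability is still $\P(J\in 2\Z)$ and your formula stands.
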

\begin{proof}
We first consider Item (i). Proposition \ref{prop:GenDualPath_Markov} gives us that 
for any $\alpha \in \{ l, r, o\}$ and for any $(y,s) \in \widehat{V}$ we have 
$$
\P( \widehat{h}_{(\alpha)}(y,s) \in \Z^2) \geq \widehat{p} = 
\frac{2(1-p)}{(2-p)^2}  > 0 .
$$
Therefore, for any $\ell \geq 1$ the r.v. $\beta_{\ell}$ is finite a.s. 
Further, for $(y_1,s_1), (y_2,s_2) \in \widehat{V}$ with $s_1 \neq s_2$, 
we observe that for any $\alpha \in \{ l, r, o\}$, the events, 
$\{\widehat{h}_{(\alpha)}(y_1,s_1) \in \Z^2\}$ and 
$\{\widehat{h}_{(\alpha)}(y_2,s_2) \in \Z^2\}$ are independent. 
Therefore, Proposition \ref{prop:GenDualPath_Markov} gives us that 
the r.v. $(\beta_{\ell + 1} - \beta_\ell)$
follows geometric distribution with success parameter $\frac{2(1-p)}{(2-p)^2}$. This completes the proof of Item (i). 

Item (ii) follows from the Markov property and the 
translation invariance nature of our model.    
\end{proof}


The next Proposition gives us the diffusive scaling limit of an $\alpha$-type dual path.
Recall that for  $(y,s) \in \widehat{V}$, 
the notation $\widehat{\pi}^{(y,s)}_{(\alpha)}$ denotes 
the $\alpha$-type dual (backward) path obtained by linearly
 joining successive $\alpha$-type dual steps. 
\begin{proposition}
\label{prop:DualPathDiffLimit}
For any $(y,s) \in \widehat{V}$,  we have the following 
\begin{itemize}
\item[(i)] $S_n(\widehat{\pi}^{(y,s)}_{(o)})$ converges in distribution to 
the standard backward Brownian motion, i.e., running backward in time, starting from origin. 
\item[(ii)] $S_n(\widehat{\pi}^{(y,s)}_{(l)})$ converges in distribution to 
the backward (dual) Brownian motion with drift $\bar{b}(p)$ (defined as in (\ref{def:Drift_Param})) starting from origin.
\item[(iii)] $S_n(\widehat{\pi}^{(y,s)}_{(r)})$ converges in distribution to 
the backward Brownian motion with drift $-\bar{b}(p)$ starting from origin.
\end{itemize}
\end{proposition}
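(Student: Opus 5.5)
We will prove Proposition~\ref{prop:DualPathDiffLimit} using the renewal structure of Proposition~\ref{prop:DualRenewalSteps}. Fix $\alpha \in \{l,r,o\}$ and $(y,s)\in\widehat{V}$. The renewal steps $\beta^{(\alpha)}_\ell$ cut the path $\widehat{\pi}^{(y,s)}_{(\alpha)}$ into blocks, and the block increments $(X^{\ell+1}_{(\alpha)},T^{\ell+1}_{(\alpha)})_{\ell\ge1}$ are i.i.d. By (\ref{eq:BetaExpDecay}), $T^{\ell+1}_{(\alpha)}$ is geometric with mean $\frac{(2-p)^2}{2(1-p)}$.

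Each $X$ is a sum of at most $T$ increments, each of the form $Y^{(1)}$, $Y^{(2)}$ or $Y^{(2)}+D$. All of these are dominated by sums of geometric variables. Hence $X$ has exponential moments, uniformly in $n$. The first block, from step $0$ to $\beta_1$, is negligible after scaling.

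The plan is to apply a functional CLT for the renewal (random-walk) process $\sum_{\ell\le m} X^{\ell}$, evaluated at the random index $m(k)=\max\{\ell:\beta_\ell\le k\}$. The renewal law of large numbers gives $m(k)/k \to 2(1-p)/(2-p)^2$. We then interpolate linearly inside blocks. Since block lengths and displacements have exponential tails, the maximal within-block deviation over $O(\epsilon_n^{-2})$ blocks is $O(\log(1/\epsilon_n))$. This is $o(\epsilon_n^{-1})$, so the linear interpolation does not affect the limit. This is a standard Donsker argument combined with a random time change, as used for the Howard path itself in \cite{CFD09,RSS16B}.

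For item (i), $M^k_{(o)}$ is a martingale by Proposition~\ref{prop:DualPathMartingale}, so $\E X_{(o)} = 0$ by optional stopping (the stopping time has exponential tails). We need to identify the variance. The per-step variance is computed by averaging over the stationary distribution of the chain $Z$ from Proposition~\ref{prop:IntegerNonintegerMarkov}. With weight $\frac{2(1-p)}{(2-p)^2}$ on $\Z$, this average is $\frac{2-2p+p^2}{(2-p)^2}\mathrm{Var}(Y^{(1)}) + \frac{2(1-p)}{(2-p)^2}\mathrm{Var}(Y^{(2)})$. It should equal $\sigma^2$, the diffusion constant of \cite{RSS16B}; we simply invoke Theorem~\ref{thm:JtConvBweb} for the value. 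Indeed, the ordinary dual path converges to a standard backward Brownian motion under $S_n$. This gives (i) directly.

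For items (ii) and (iii), write
\[
M^k_{(l)} = \widetilde{M}^k + \sum_{j<k} \mathbf{1}_{\{M^j_{(l)}\in\Z\}} D_{(l),j+1},
\]
where $\widetilde{M}$ has the same increments as the ordinary chain along the same trajectory. The process $\widetilde{M}$ is a martingale whose quadratic variation agrees with that of case (i). Only the drift variable $D$ has law depending on $n$, and its conditional quadratic variation contribution is $O(\beta_n)\to 0$ per integer visit. Thus martingale FCLT arguments give a standard Brownian limit for $\epsilon_n \widetilde{M}_{\lfloor t\epsilon_n^{-2}\rfloor}/\sigma$.

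For the drift term, use (\ref{eq:DriftExpectation}): each integer visit contributes conditional mean $\beta_n/2$. Subtracting these conditional means leaves a martingale with variance $O(\beta_n\epsilon_n^{-2})$, which vanishes after multiplying by $\epsilon_n^2$. The number of integer visits up to step $t\epsilon_n^{-2}$ is asymptotically $\frac{2(1-p)}{(2-p)^2}t\epsilon_n^{-2}$. So the scaled drift is
\[
\frac{\epsilon_n}{\sigma}\cdot\frac{\beta_n}{2}\cdot\frac{2(1-p)}{(2-p)^2}\, t\,\epsilon_n^{-2} \;\to\; \frac{b(1-p)}{(2-p)^2}\, t\cdot c_\sigma,
\]
where $c_\sigma$ collects the normalization. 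This matches $\bar b$ in (\ref{def:Drift_Param}), up to checking the $\sigma$ convention in $S_n$. The step count is backward in time, and $D^{(l)}\ge0$, so the left dual path drifts in the $+$ direction, as claimed in (ii); symmetrically $D^{(r)}\le 0$ gives (iii).

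The main obstacle is the uniform-in-time law of large numbers for the number of integer visits. We obtain it from the ergodic two-state chain of Proposition~\ref{prop:IntegerNonintegerMarkov}, whose transition law does not depend on $\alpha$ or $n$; Doob's maximal inequality applied to its centered occupation martingale makes the bound uniform.

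A second technical point is that under the branching chain the integer-visit indicators and the $Y$ increments are not independent of $D$. This is handled by working throughout with conditional expectations given $\mathcal{F}_k$ from (\ref{def:Filtration}), enlarged to include the $\mathrm{Br}^n$ variables.
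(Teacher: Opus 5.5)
Your proposal is correct. For (i) it agrees with the paper, which simply cites Proposition 2.10 of \cite{RSS16B}. For (ii)--(iii) you take a genuinely different route.

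The paper writes $\widehat{h}^j_{(l)}(y,s)(1)\stackrel{d}{=}\widehat{h}^j_{(o)}(y,s)(1)+\sum_{\ell:\beta^{(l)}_\ell\le j}D_{(l),\ell}$ only as an identity in law. It then asserts that, by (i), it suffices to show $\epsilon_n\sum D\to\bar b T$ almost surely. It obtains this from a ``SLLN'' for the drift variables at renewal steps, combined with the renewal law for the number of renewals in $\lfloor T/\epsilon_n^2\rfloor$ steps.

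You instead decompose pathwise along the left trajectory into three pieces:
\begin{itemize}
\item a martingale $\widetilde M$ carrying the ordinary increments $Y^{(1)},Y^{(2)}$;
\item the compensator $\frac{\beta_n}{2}\,\#\{\text{integer visits}\}$;
\item a centred drift martingale.
\end{itemize}
You get the Brownian part from the martingale FCLT, identifying the bracket with that of case (i) because the integer-indicator chain along the $l$-path has the law given in Proposition~\ref{prop:IntegerNonintegerMarkov}. You discard the centred drift by Doob's inequality and the $O(\beta_n\epsilon_n^{-2})$ variance bound.

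The paper's route is shorter. Yours repairs its two soft spots:
\begin{itemize}
\item An equality in law of one-time marginals does not justify adding the limit of the ordinary path to the limit of the drift sum. The $l$-path and the ordinary path from $(y,s)$ separate at the first branching with $D^{(l)}\neq 0$, so a joint decomposition such as your $\widetilde M$ is needed.
\item Since the law of $D$ depends on $n$, the ``SLLN'' step is really a weak law for a triangular array. Your second-moment bound supplies exactly this, uniformly in time.
\end{itemize}
Your ``main obstacle'' is in fact easy. Both rows of the matrix in Proposition~\ref{prop:IntegerNonintegerMarkov} coincide, so the indicators $Z_j$, $j\ge 1$, are i.i.d.\ Bernoulli$(\frac{2(1-p)}{(2-p)^2})$.

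On the constants you left to citation, the check closes by hand. We have $\mathrm{Var}(Y^{(1)})=\frac{1-p}{2p^2}$ and $\mathrm{Var}(Y^{(2)})=\frac{2-2p+p^2}{4p^2}$. The stationary average $\frac{2-2p+p^2}{(2-p)^2}\mathrm{Var}(Y^{(1)})+\frac{2(1-p)}{(2-p)^2}\mathrm{Var}(Y^{(2)})$ equals $\sigma(p)$ in (\ref{def:Diff_Coeff}) exactly. So the per-step variance is $\sigma(p)$ itself, not $\sigma^2$ as you wrote.

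Consequently, a standard limit in (i) requires the spatial factor $\epsilon_n/\sqrt{\sigma(p)}$, and then the drift in (ii)--(iii) is $\bar b/\sqrt{\sigma(p)}$. The paper's computation of $\epsilon_n\sum D\to\bar bT$ omits the factor $1/\sigma$ present in (\ref{def:DiffusiveScaling}). Your caveat about the $\sigma$ convention therefore points to a normalization inconsistency in the statement itself, not to a gap in your argument.
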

\begin{proof}
Item (i) of Proposition \ref{prop:DualPathDiffLimit} 
has been proved in Proposition 2.10 of \cite{RSS16B}. 

We prove Item (ii) and the proof for Item (iii) is exactly the same. 
Recall that the drift random variables corresponding to the leftmost dual path 
at the $\ell$-th renewal step is given by $D_{(l),\ell}$.  
Proposition \ref{prop:GenDualPath_Markov} gives us that for any $j \geq 0$ 
we have  
\begin{align*}
\widehat{h}^j_{(l)}(y,s)(1) \stackrel{d}{=} 
\widehat{h}^j_{(o)}(y,s)(1) + 
\sum_{\ell \geq 1 : \beta^{(l)}_\ell  \leq j} D_{(l),\ell} ~.
\end{align*}
Fix any $T > 0$. Because of Item (i), it is enough to prove that almost surely we have
\begin{align}
\label{eq:DriftCalculation_1}
\epsilon_n \bigl ( \sum_{\ell \geq 1 : \beta^{(l)}_\ell  \leq \lfloor T/(\epsilon_n)^2\rfloor} D^{(l)}_{\ell} \bigr )
\to \bar{b}T \text{ as }n \to \infty~.
\end{align}

Let $N^T_{\beta} := \sup\{ \ell \geq 0 : 1 \leq \beta_\ell \leq \lfloor 
T/(\epsilon_n)^2\rfloor \}$ denote the number of $\beta$ renewal steps 
within $\lfloor T/(\epsilon_n)^2\rfloor$ many steps. Clearly, $N^T_{\beta} \to \infty$
almost surely as $n \to \infty$. Therefore, we can write (\ref{eq:DriftCalculation_1}) 
as follows and obtain 
\begin{align}
\label{eq:DriftCalculation_2}
\epsilon_n \bigl ( \sum_{\ell \geq 1 : \beta_\ell \leq \lfloor T/(\epsilon_n)^2\rfloor} 
D^{(r)}_{\beta_\ell}  \bigr ) = \epsilon_n \Bigl ( \bigl ( \sum_{\ell \geq 1 : \beta_\ell \leq \lfloor T/(\epsilon_n)^2\rfloor } D^{(r)}_{\beta_\ell}/ N^T_{\beta} \bigr ) 
\times ( N^T_{\beta} / (\lfloor T/(\epsilon_n)^2 \rfloor )) \times 
\lfloor T/(\epsilon_n)^2 \rfloor \Bigr )
\end{align} 
As $n \to \infty$, because of Proposition \ref{prop:DualRenewalSteps}, for Item (ii) 
we can apply SLLN to ensure that the first term in the r.h.s. of (\ref{eq:DriftCalculation_2}) 
almost surely converges to $\E(D^{(r)}_{\beta_\ell}) = \beta_n/2 $ (see (\ref{eq:DriftExpectation})). By  Item (i) of Proposition \ref{prop:DualRenewalSteps}, the second term in the r.h.s. of (\ref{eq:DriftCalculation_2}), i.e., the ratio 
of the total number of renewals in $\lfloor T/(\epsilon_n)^2) \rfloor$ many steps and $\lfloor T/(\epsilon_n)^2) \rfloor$, converges to 
$(\E (\beta^{(\alpha)}_{\ell + 1} - \beta^{(\alpha)}_{\ell}))^{-1} = 
\frac{2(1-p)}{(2-p)^2}$. Therefore, we obtain    
\begin{align*}
& \lim_{n \to \infty} \epsilon_n \Bigl ( \bigl ( \sum_{\ell \geq 1 : \beta_\ell \leq \lfloor T/(\epsilon_n)^2\rfloor} D^{(r)}_{\beta_\ell}/ N^T_{\beta} \bigr ) 
\times (N^T_{\beta}/ (\lfloor T/(\epsilon_n)^2 \rfloor) 
\times \lfloor T/(\epsilon_n)^2 \rfloor  \Bigr ) \\
& = \lim_{n \to \infty }  (\epsilon_n \times \frac{\beta_n}{2}  
\times \frac{2(1-p)}{(2-p)^2} \times 
\lfloor T/(\epsilon_n)^2 \rfloor) \\
& = b \frac{(1-p)}{(2-p)^2} T = \bar{b}T , 
\end{align*} 
where $\bar{b}$ is as in (\ref{def:Drift_Param}).
This completes the proof.  
\end{proof}

 
For $(y,s) \in \widehat{V}$ instead of considering the actual $\alpha$-type dual path we consider the `modified' dual path obtained by linearly joining the successive renewal steps denoted by $\widehat{\tilde{\pi}}^{(y,s)}_{(\alpha)}$ defined as 
\begin{align}
\label{def:ModDualPath_renewal} 
\widehat{\tilde{\pi}}^{(y,s)}_{(\alpha)}(s - t) := \widehat{h}^{\beta^{(\alpha)}_\ell}_{(\alpha)}(y,s)(1) + \frac{(t - \widehat{h}^{\beta^{(\alpha)}_\ell}_{(\alpha)}(y,s)(2))}{T^{\ell + 1}_{(\alpha)}} X^{\ell + 1}_{(\alpha)}\text{ for }\beta^{(\alpha)}_\ell \leq t <  \beta^{(\alpha)}_{\ell + 1} 
\end{align}
where $\{ T^{\ell}_{(\alpha)} = T^{\ell}_{(\alpha)}(y,s) : \ell \geq 1\}$ denotes the sequence of successive renewal steps and $\{ X^{\ell}_{(\alpha)} = X^{\ell}_{(\alpha)}(y,s) : \ell \geq 1\}$ denotes the corresponding 
sequence of increment random variables.     
Proposition \ref{prop:DualPathDiffLimit} immediately gives us the following corollary. 
\begin{corollary}
\label{cor:DualPathDiffLimit}
For any $(y,s) \in \widehat{V}$,  we have the following 
\begin{itemize}
\item[(i)] $S_n(\widehat{\tilde{\pi}}^{(y,s)}_{(o)})$ converges in distribution to 
the standard backward Brownian motion, i.e., running backward in time, starting from origin. 
\item[(ii)] $S_n(\widehat{\tilde{\pi}}^{(y,s)}_{(l)})$ converges in distribution to 
the backward (dual) Brownian motion with drift $\bar{b}(p)$ (defined as in (\ref{def:Drift_Param})) starting from origin.
\item[(iii)] $S_n(\widehat{\tilde{\pi}}^{(y,s)}_{(r)})$ converges in distribution to 
the backward Brownian motion with drift $-\bar{b}(p)$ starting from origin.
\end{itemize}
\end{corollary}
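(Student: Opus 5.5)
The plan is to compare the modified path $\widehat{\tilde{\pi}}^{(y,s)}_{(\alpha)}$ with the actual path $\widehat{\pi}^{(y,s)}_{(\alpha)}$ and show that the two are uniformly close after diffusive scaling on compact time intervals. Convergence in distribution then transfers directly from Proposition \ref{prop:DualPathDiffLimit}. Concretely, fix $T>0$ and write $\Delta_n := \sup_{0\le t\le T/\epsilon_n^2}|\widehat{\pi}^{(y,s)}_{(\alpha)}(s-t)-\widehat{\tilde{\pi}}^{(y,s)}_{(\alpha)}(s-t)|$. It suffices to show $\epsilon_n \Delta_n \to 0$ in probability. After that, a standard Slutsky-type argument in $(\Pi, d_\Pi)$, or more precisely its backward analogue, gives all three items. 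The scaled paths agree at the starting point, and the metric weight $1/(1+|t|)$ controls the behaviour for large $|t|$.

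At renewal times $\beta^{(\alpha)}_\ell$ the two paths coincide. Between consecutive renewal times both paths are confined to the range of positions visited by the actual path during the block. Hence $\Delta_n$ is bounded by $2\max_{\ell \le N^T_\beta+1} W_\ell$, where
$$
W_\ell := \max_{\beta^{(\alpha)}_\ell \le j \le \beta^{(\alpha)}_{\ell+1}} \bigl|\widehat{h}^{j}_{(\alpha)}(y,s)(1)-\widehat{h}^{\beta^{(\alpha)}_\ell}_{(\alpha)}(y,s)(1)\bigr|.
$$
The initial segment $[0,\beta^{(\alpha)}_1]$ is handled the same way and is a.s. finite. Moreover $W_\ell \le \sum_{j=\beta_\ell+1}^{\beta_{\ell+1}} |\text{increment}_j|$. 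Each increment is dominated by $(J_1+J_2)/2$, where the $J_i$ are geometric. The drift adjustment $D$ is likewise at most $(J^++J^-)/2$ and is nonzero only at integer positions, i.e. at the start of a block. By Proposition \ref{prop:DualRenewalSteps} the block lengths are geometric and the blocks are i.i.d. Using the independence structure across levels, $W_\ell$ is therefore stochastically dominated by a geometric sum of i.i.d. variables with exponential tails. Consequently $W_\ell$ has an exponential moment, uniformly in $n$, because the branching only changes the law through $D$ and that bound does not depend on $\beta_n$.

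Finally, $N^T_\beta \le T/\epsilon_n^2$. A union bound with the exponential tail gives
$$
\P\bigl(\max_{\ell\le T/\epsilon_n^2+1} W_\ell > \delta/\epsilon_n\bigr)\le (T/\epsilon_n^2+1)\,C e^{-c\delta/\epsilon_n}\to 0 .
$$
This yields $\epsilon_n\Delta_n\to 0$ in probability, which completes the argument.

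The only delicate point is to justify that the $W_\ell$ have exponential tails uniformly in $n$, given that the path's increments depend on the environment. I would handle this as follows. The increment at step $j$ uses only the variables on level $s-j-1$, so the increments, conditionally on the path, are dominated by independent copies of $J_1+J_2$. Since the block length is a stopping time with geometric tail, a Wald/Chernoff bound gives the exponential moment. The remaining step, the deterministic comparison of paths, is routine.
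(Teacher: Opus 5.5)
Your proposal is correct and follows the paper's route: the paper only asserts that the corollary is immediate from Proposition \ref{prop:DualPathDiffLimit}, and your estimate $\epsilon_n\sup_{t\le T/\epsilon_n^2}|\widehat{\pi}^{(y,s)}_{(\alpha)}(s-t)-\widehat{\tilde{\pi}}^{(y,s)}_{(\alpha)}(s-t)|\to 0$ in probability, transferred through $d_\Pi$, is exactly the justification that ``immediately'' leaves implicit. One small tightening: dominate the increments by $\xi_j=(J^+_j+J^-_j)/2$ conditionally on the already explored levels rather than on the path, which makes $(\xi_j)$ i.i.d.\ with a law free of $n$; the uniform exponential tail of $W_\ell$ then follows from the bound $\P(G>\kappa x)+\P(\sum_{j\le \kappa x}\xi_j>x)$ for small $\kappa$, not from Wald's identity, since the block length $G$ is not independent of the $\xi_j$.
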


\subsection{Joint evolution of two dual Howard paths}
\label{subsec:DualPath_Evolution_Joint}

Before discussing the evolution of the joint process, we 
first observe that for any $(y,s) \in \widehat{V}$ and for any $\alpha \in \{l, r, o\}$, 
the construction of the marginal process $\{ \widehat{h}^j_{(\alpha)}(y,s) : j \geq 0 \}$ shows that,
in order to obtain the step $\widehat{h}^j_{(\alpha)}(y,s)$ it is 
{\it enough} to explore random variables $B_{(z,t-1)}, R_{(z,t-1)}, \text{Br}^n_{(z,t-1)}$ 
attached to lattice points inside a region $\widehat{E}^{1}$ which is given by
\begin{align*}
E^{1} = E^{1}(y,s) := \begin{cases}
\{ (z, s  - 1) \in \Z^2 : - J^-_{(y,s-1)} \leq z - y \leq J^+_{(y,s-1)} \} & \text{ if } y \in \Z \\
\{ (z, s  - 1) \in \Z^2 : - J^-_{(y + 1/2,s-1)} \leq z - y \leq J^+_{(y - 1/2,s-1)} \} & \text{ otherwise}.
\end{cases}
\end{align*}
In other words, $\widehat{h}^j_{(\alpha)}(y,s)$ is measurable 
w.r.t. the $\sigma$-field 
$$
\sigma \bigl (\{ (B_{(z, s-1)}, R_{(z, s -1)}, \text{Br}^n_{(z, s-1)}) 
: (z,s-1) \in  E^{1} \} \bigr ).
$$ 
In what follows, we refer the region $E^{1}(y,s)$ as the exploration region 
to obtain the first $\alpha$-type dual step starting from $(y,s)$.  
More generally, for $k \geq 1$ the exploration region for the `$\alpha$' type $k$-th dual step starting from $(y,s)$ is denoted by $E^{k} = E^{k}(y,s)$ 
and is defined similarly.

For $(y_1, 0), (y_2, 0) \in \widehat{V}$ with $y_1 < y_2 $ and for $
\alpha_1, \alpha_2 \in \{ l, r, o\}$  we consider 
the joint evolution of $\alpha_1$ and $\alpha_2$-type dual paths 
$\{ \widehat{h}^k_{(\alpha_1)}(y_1, 0), \widehat{h}^k_{(\alpha_2)}(y_2, 0) : k \geq 0\}$ 
starting from $(y_1, 0)$ and $(y_2, 0)$. Let $\eta = \eta(\alpha_1, \alpha_2)$ denote 
the random step such that the respective exploration regions overlap for the first time:    
\begin{align*}
\eta = \eta(\alpha_1, \alpha_2) := \inf\{ k \geq 1 : E^k(y_1,0) \cap 
E^k(y_2,0) \neq \emptyset \}.
\end{align*}
From the construction of both the dual paths, as described in the previous section, 
the next lemma is straightforward to observe.

\begin{lemma} 
\label{lem:independence}
The processes, $\{ \widehat{h}^j_{(\alpha_1)}(y_1,0) : 0 \leq j < \eta \}$ 
and $ \{ \widehat{h}^j_{(\alpha_2)}(y_2,0): 0 \leq j < \eta \}$ are independent.
\end{lemma}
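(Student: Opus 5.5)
We prove Lemma \ref{lem:independence} by exhibiting both stopped processes as functions of the i.i.d. field $\{\Lambda_{(z,t)} : (z,t)\in\Z^2\}$ from (\ref{def:LambdaRandomVector}), restricted to two disjoint random regions. The basic fact, already noted before the statement, is that the $k$-th $\alpha$-type dual step from $(y,s)$ is measurable with respect to the $\Lambda$'s on the exploration region $E^k(y,s)$. We will use it together with a stopping-set property of these regions, proved by induction on $k$: the event $\{E^1=A_1,\dots,E^k=A_k\}$, and the steps $\widehat{h}^1_{(\alpha)},\dots,\widehat{h}^k_{(\alpha)}$ on it, are determined by $\Lambda$ restricted to $A_1\cup\dots\cup A_k$. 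For the step $k\to k+1$, note that the position $\widehat{h}^k_{(\alpha)}(y,s)(1)$ is determined by these values. Moreover, $E^{k+1}$ lies on level $s-k-1$, so it is found by scanning $B_{(\cdot,s-k-1)}$ outward from that position until the nearest open sites on each side are reached. Hence $\{E^{k+1}=A\}$ is measurable with respect to $\Lambda|_A$ and the earlier levels.

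Apply this to the two paths started at $(y_1,0)$ and $(y_2,0)$. Each region $E^k(y_i,0)$ lies on level $-k$. On $\{\eta>k\}$ the regions $E^j(y_1,0)$ and $E^j(y_2,0)$, $j\le k$, are therefore pairwise disjoint, since regions on different levels are trivially disjoint. Fix deterministic admissible sequences $(A^1_j)_{j\le k}$ and $(A^2_j)_{j\le k}$ with $A^1_j\cap A^2_j=\emptyset$. The event
\[
\{E^j(y_i,0)=A^i_j,\ j\le k,\ i=1,2\}
\]
factors as the intersection of an event measurable with respect to $\Lambda$ on $\bigcup_j A^1_j$ and one measurable with respect to $\Lambda$ on $\bigcup_j A^2_j$. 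The same factorisation holds jointly with the path values, because the stopping-set property also gives the paths as functions of these restrictions. Since the $\Lambda$'s are independent over disjoint sets, summing over such admissible configurations expresses the joint law of the two stopped paths as a product of the two separate laws.

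To get independence of the full processes $\{\widehat{h}^j_{(\alpha_i)}(y_i,0):0\le j<\eta\}$, we use a standard decoupling construction. Take an independent copy $\Lambda'$ of the field. Let path 1 be driven by $\Lambda$, and let path 2 be driven by $\Lambda$ on sites explored by path 1 and by $\Lambda'$ elsewhere, or run path 2 entirely on $\Lambda'$, giving an independent copy. Until the first step at which the exploration regions overlap, the real path 2 and the copy read identical variables, so they coincide. Thus the pair stopped just before $\eta$ has the same law as two independent paths each stopped at the first overlap time computed from them. This is the intended meaning of the statement, because $\eta$ itself depends on both paths.

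The main obstacle is making the stopping-set property rigorous, especially for non-integer positions. There $E^1$ uses $J^+_{(y-1/2,\cdot)}$ and $J^-_{(y+1/2,\cdot)}$, and the endpoints of the region are themselves random. The branching variables $\text{Br}^n$ and the Rademachers $R$ sit at the current site, which lies inside the region. We will check directly from the case-by-case definitions of $\widehat{h}_{(\alpha)}$ in Section \ref{subsec:DualPath_Evolution_Marginal} that no variable outside $E^{k}$ is ever consulted.
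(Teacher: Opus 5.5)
Your proposal is correct and uses the same idea as the paper's proof: before step $\eta$ the two dual paths read the i.i.d. field $\Lambda$ only on disjoint exploration regions. Your version is more careful than the paper's one-line argument in three ways. First, the stopping-set factorisation is what actually justifies independence when the regions are random. Second, since $\eta$ depends on both paths, the lemma can only hold as the law identity you state. Third, it implicitly corrects the paper's sets $A_1,A_2$, which should run only over $k\le\eta-1$, because the regions do overlap at step $\eta$.
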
 
\begin{proof}
The proof simply follows from the fact that until the step $\eta$, for both the dual paths 
respective exploration regions remain disjoint. Until the $\eta$-th 
step, the dual paths $\{ \widehat{h}^j_{(\alpha_1)}(y_1,0) : 0 \leq j < \eta \}$ 
and $ \{ \widehat{h}^j_{(\alpha_2)}(y_2,0): 0 \leq j < \eta \}$ respectively use disjoint collections of random variables given by  
\begin{align*}
A_1 := & \cup_{k=1}^\eta \{ (B_{(z,-k)}, R_{(z,-k)}, \text{Br}^n_{(z,-k)}) : 
(z,-k) \in E^k(y_1,0) \} 
\text{ and } \\
A_2 := & \cup_{k=1}^\eta \{ (B_{(z,-k)}, R_{(z,-k)}, 
\text{Br}^n_{(z,-k)}) : (z,-k) \in E^k(y_2,0) \}.
\end{align*}
By definition of the random variable $\eta$ we have 
$A_1 \cap A_2 = \emptyset$ and therefore, 
the proof follows.
\end{proof}
For $\alpha_1, \alpha_2 \in \{ l, r, o\}$ and for 
$k \geq 0$ let $F^{k+1}$ denote the event 
\begin{align}
\label{def:Event_Fk}
F^{k+1} = F^{k+1}(\alpha_1, \alpha_2) := \bigl ( \widehat{h}^{k}_{(\alpha_1)}(y_1,0) 
\neq \widehat{h}^{k}_{(\alpha_2)}(y_2,0), E^{k + 1}(y_1,0) \cap E^{k + 1}(y_2,0) 
\neq \emptyset \bigr ). 
\end{align}
\begin{lemma}
\label{lem:CoalescenceProb}
We have 
\begin{align*}
\P \bigl ( \widehat{h}^{k + 1}_{\epsilon, (\alpha_1)}(y_1,0) = 
\widehat{h}^{k + 1}_{\epsilon, (\alpha_2)}(y_2,0) \mid 
 F^{k+1} \bigr )
 \geq (1-p)^3.
\end{align*}
\end{lemma}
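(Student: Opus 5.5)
We prove the bound by exhibiting, on $F^{k+1}$, an explicit configuration of at most three lattice sites at level $-(k+1)$ that forces the two dual steps to coincide. The key facts are that all these sites are still \emph{unexplored} at step $k$, and that each is closed with probability $1-p$ independently of the past. Write $u_i := \widehat{h}^{k}_{(\alpha_i)}(y_i,0)(1)$ for $i=1,2$, so $u_1 < u_2$ by non-crossing and the definition of $F^{k+1}$. Let $\mathcal{G}_k$ be the $\sigma$-field generated by the variables $\Lambda_{(z,-j)}$, $j \le k$, used to build the first $k$ steps of both paths.

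First, the exploration regions $E^{k+1}(y_i,0)$ live at level $-(k+1)$. Conditionally on $\mathcal{G}_k$, the family $\{\Lambda_{(z,-(k+1))} : z \in \Z\}$ is still i.i.d. and independent of $\mathcal{G}_k$. The one caveat is that whether $F^{k+1}$ occurs depends on the level-$(k+1)$ variables through the random extents $J^\pm$. So we condition on $\mathcal{G}_k$ and treat $F^{k+1}$ as an event about the level-$(k+1)$ configuration. The regions overlap exactly when no open site in $\Z$ separates them: there is no open vertex strictly between the right end of $E^{k+1}(y_1,0)$ and the left end of $E^{k+1}(y_2,0)$. In other words, the nearest open vertex to the right of $u_1$ is the same as (or to the right of) the nearest open vertex to the left of $u_2$.

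Next we split into cases according to the integrality of $u_1,u_2$ and, in the integer case, whether $(u_i,-(k+1)) \in V$. In each case, coalescence of the next steps follows once the sites in $(u_1,u_2)\cap\Z$ at level $-(k+1)$ contain at most one open vertex $w$. When that holds, both dual steps land at the same midpoint: the midpoint between the open neighbour $w$ (or the open vertex just to the right of $u_1$) and the appropriate neighbour, which is shared. Equivalently, $a^r(u_1,-k)$ and $a^l(u_2,-k)$, respectively $a^l$ and $a^r$, collapse to the same pair of open vertices. A branching step merely replaces the choice by the midpoint with $l(\cdot)$ or $r(\cdot)$, so the same shared-neighbour argument applies to the $\alpha_i\in\{l,r\}$ variants.

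The probability bound comes from the following observation. On $F^{k+1}$, the overlap already fixes that at most one open vertex separates the two points. What could still fail is determined by at most three specific integer sites adjacent to $u_1$, $u_2$ and the shared open vertex: for instance $(u_1,-(k+1))$, $(u_2,-(k+1))$, and one site determining the tie-breaking side. Requiring these three sites to be closed forces both steps to the same dual vertex. Since these sites are independent Bernoulli variables, each closed with probability $1-p$, we get the lower bound $(1-p)^3$.

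The main obstacle is the conditioning. The event $F^{k+1}$ is itself measurable with respect to level-$(k+1)$ variables, so the naive product bound needs an FKG/monotonicity argument. The point is that closing further sites can only enlarge the $J^\pm$'s, and hence preserves the overlap event. Therefore conditioning on the overlap does not decrease the probability that the three sites are closed, which is the Harris inequality for decreasing events. I would verify carefully that ``coalescence'' is implied by (overlap) $\cap$ (three sites closed) in every case, including the Rademacher tie case, since that is where the argument could break.
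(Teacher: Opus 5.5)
\paragraph{Verdict.} There is a genuine gap in your proof, and the statement itself is false as written, so no argument can close it.

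\paragraph{The step that fails.} You claim that once the sites of $(u_1,u_2)\cap\Z$ at level $-(k+1)$ contain at most one open vertex, both dual steps land at the same midpoint. This is false.
\begin{itemize}
\item Take $u_1<u_2$ both non-integers. By Case~1, the types $\alpha_i$, the $R$'s and the $\text{Br}^n$'s play no role, and the step from $u_i$ is the midpoint of the two consecutive open sites straddling $u_i$.
\item If exactly one site $w\in(u_1,u_2)$ is open, then $u_1$ moves to the midpoint of the gap ending at $w$. Meanwhile $u_2$ moves to the midpoint of the gap starting at $w$. These differ.
\item Coalescence happens iff \emph{all} $N:=\#\bigl((u_1,u_2)\cap\Z\bigr)$ intermediate sites are closed. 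By contrast, ``at most one of them open'' is exactly the overlap event $F^{k+1}$.
\item On $F^{k+1}$ the one permitted open site can be any of the $N$ sites. So it is not controlled by three prescribed sites.
\end{itemize}
Your Harris--FKG step is a sound way to handle conditioning on the decreasing event $F^{k+1}$. However, it gives $\P(G\mid F^{k+1})\ge\P(G)$ only for a \emph{fixed} decreasing event $G$. The only such $G$ with $G\cap F^{k+1}\subseteq\{\text{coalescence}\}$ is $\{\text{all }N\text{ sites closed}\}$, which yields $(1-p)^N$, not $(1-p)^3$.

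\paragraph{Counterexample to the statement.} Take $k=0$, $y_1=1/2$, $y_2=N+1/2$, given a level-$0$ configuration making these dual vertices.
\begin{itemize}
\item The level-$(-1)$ variables are independent of level $0$.
\item $F^1$ is the event that at most one of the sites $1,\dots,N$ at level $-1$ is open. Coalescence is the event that none is.
\item Hence the conditional probability is $(1-p)^N/\bigl((1-p)^N+Np(1-p)^{N-1}\bigr)=(1-p)/(1-p+Np)$.
\item For $p=1/2$, $N=10$ this is $1/11<1/8=(1-p)^3$.
\end{itemize}

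\paragraph{The paper's proof has the same flaw.} The paper's $w_1,w_2$ are your $u_1,u_2$.
\begin{itemize}
\item It writes $F^{k+1}$ as the union over admissible $(j_1,j_2)$ of $\{J^+_{(w_1-1/2,-k-1)}>j_1,\ J^-_{(w_2+1/2,-k-1)}>j_2\}$.
\item It then treats $B_{(w,-k-1)}$ at the leftover site $w$ as a fresh Bernoulli$(p)$ variable.
\item That is correct conditionally on a single piece of the union. There, $w$ (and $u_1,u_2$ in the integer case) really is unexplored, and the three-site bound holds.
\item But every piece contains the all-closed configuration, so conditioning on the union gives only the $O(1/N)$ bound above.
\end{itemize}

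\paragraph{What a correct version needs.} It should condition on one piece, or assume $u_2-u_1$ is bounded. The applications in Lemma~\ref{lem:DependentStateTime} and Lemma~\ref{lem:DependentStateTotalTime} would then need a separate estimate for entries into the dependent state from large gaps. At each step such an entry has conditional probability exponentially small in the gap.
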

\begin{proof}
It is important to observe that given that the event $F^{k+1}$ has happened, 
the realizations of the random variables 
$(B_{(y,s)},R_{(y,s)}, \text{Br}^n_{(y,s)}) $ for  
$(y,s) \in E^{k + 1}_{(\alpha_1)}(y_1,0) \cup E^{k + 1}_{(\alpha_2)}(y_2,0)$ are {\it not} 
completely known to us. 
Set $ \widehat{h}^{k}_{(\alpha_i)}(y_i,0) = (w_i, -k) \in \widehat{V}$ for $ i \in \{1, 2\}$.
Firstly, we consider the situation that $ w_1 < w_2$ with $w_1, w_2 \notin \Z$. 
We will consider the more general case later.

Given that the event $F^{k+1}$ has occurred,
we observe that there must exist integers $j_1, j_2 \geq 0$ satisfying 
\begin{align}
\label{def:J_1andJ_2}
& \{(m , -k -1) \in \Z^2 :  m - w_1 \in [0, j_1] ,  m - w_2 \in [-j_2 , 0]   \} = \emptyset  \text{ and }  \nonumber \\
& \{(m , -k -1) \in \Z^2 : m - w_1 \in [0, j_1 + 1] , m - w_2 \in [-(j_2+1) , 0]  \} \neq  \emptyset ,  
\end{align}
such that the event $ \bigl ( J^+_{(w_1 - 1/2, -k-1)} > j_1 ,
J^-_{(w_2 + 1/2, -k-1)} > j_2 \bigr ) $ occurs. 
In other words, we have the following event equality: 
\begin{align}
\label{def:EventEk_ExplorationRepresentation}
F^{k+1} = \bigcup_{j_1, j_2 \in \Z^+ \text{ satisfying }(\ref{def:J_1andJ_2})} 
\bigl ( J^+_{(w_1 - 1/2, -k-1)} > j_1, J^-_{(w_2 + 1/2, -k-1)} > j_2 \bigr ).
\end{align} 
Further, we observe that on the event $F^{k+1}$ for $j_1 , j_2 \in \Z^+$ satisfying (\ref{def:J_1andJ_2})
we have that the set 
\begin{align*}
& \{(m , -k -1) \in \Z^2 : m - w_1 \in [0, j_1 + 1] , m - w_2 \in [-(j_2+1) , 0] \}
\end{align*}
must be singleton and suppose $(w, -k-1)$ denotes the common vertex. 

Given that the event $F^{k+1}$ has occurred, $B_{(w, -k-1)}$ is independently distributed as
Bernoulli r.v. with parameter $p$. If $B_{(w, -k-1)} = 0$, 
then the two dual paths must meet at the $(k+1)$-th step. This completes the proof of 
Lemma \ref{lem:CoalescenceProb} under the assumption that $\widehat{h}^k_{(\alpha_1)}(y_1,0), 
\widehat{h}^k_{(\alpha_2)}(y_2,0) \notin \Z$. 

In order to prove Lemma \ref{lem:CoalescenceProb} for more general situation  
conditional to the event $F^{k+1}$, we consider the event 
$$
\{\widehat{h}^k_{(\alpha_1)}(y_1,0) + (0,-1), \widehat{h}^k_{(\alpha_2)}(y_2,0) + (0,-1), (w, -k-1)\}
\cap V = \emptyset ,
$$  
which ensures the intersection of two dual paths at the next step. 
Further, the probability of occurrence of the above event is bounded below by $(1-p)^3$ 
as the cardinality of the set considered in the above mentioned event  
is bounded by three. This completes the proof.
\end{proof}

\section{Division of the total time for joint evolution of two dual paths}
\label{sec:Div_TotalTime}

Because of the translation invariance nature of our model, it is enough to consider 
dual paths starting from $(y_1,0),  (y_2, 0) \in \widehat{V}$ with $y_1 \leq y_2$.
We consider the joint evolution of $\alpha_1$ and $\alpha_2$ type dual paths for
$\alpha_1, \alpha_2 \in \{l, r, o\}$ starting from $(y_1, 0)$ and $(y_2, 0)$ respectively. 
We describe a coupling construction for the same. 
Since, $n$ is fixed at the moment, with a slight abuse of notation we denote the joint process as 
 $$
 \{ \widehat{h}^{j}_{(\alpha_1)}(y_1,0), \widehat{h}^{j}_{(\alpha_2)}(y_2,0) : j \geq 0\}.
 $$ 
 Below, we partition the evolution time for the joint process into three different categories. We highlight that our partition is not exactly based on the 
 current locations of the joint process. It is based on how the joint process 
 is doing the exploration for the next step. It should be mentioned that in 
 order to classify the nature of exploration (for the next step), we {\it 
 don't} need to know the positions at the next step exactly.
   
\begin{definition} 
We say that the $(j+1)$-th step of the joint process, i.e., 
$ ( \widehat{h}^{j+1}_{(\alpha_1)}(y_1,0) ,  \widehat{h}^{j+1}_{(\alpha_2)}(y_2,0) )$ is 
taken 
\begin{itemize}
\item[(i)] in a `common' manner if $\widehat{h}^{j}_{(\alpha_1)}(y_1,0) = 
\widehat{h}^{j}_{(\alpha_2)}(y_2,0) = (w,-j)$ for some $w \in \Z/2$, i.e., the paths were  coalesced at the $j$-th step. 
The nomenclature reflects that the next step of the joint process, 
which may result in a branching event too, is decided based on the `common' collection 
of random variables given by 
\begin{align*}
\{ \Lambda_{(x,- j - 1)} : (x,- j - 1) \in \Z^2 \text{ with }w - J^-_{(w, - j -1)} \leq x \leq w + J^+_{(w, - j -1)}\} & \text{ if } w \in \Z \\
\{ \Lambda_{(x,- j - 1)} : (x,- j - 1) \in \Z^2 \text{ with }w - J^-_{(w + 1/2, - j -1)} 
\leq x \leq w + J^+_{(w - 1/2 , - j -1)}\} & \text{ if } w \notin \Z . 
\end{align*}
\item[(ii)] In `dependent' manner if the event $F^{j+1}$ (defined as in (\ref{def:Event_Fk}))
occurs. Note that by definition, the occurrence of the event $F^{j+1}$ implies 
$\widehat{h}^{j}_{(\alpha_1)}(y_1,0) \neq \widehat{h}^{j}_{(\alpha_2)}(y_2,0)$.
\item[(iii)] In `independent' manner if $\widehat{h}^{j}_{(\alpha_1)}(y_1,0) \neq 
\widehat{h}^{j}_{(\alpha_2)}(y_2,0)$ and the event $F^{j+1}$ does not occur.
\end{itemize} 
\end{definition}
We observe here that entrance to the common state
is one step later than the entrance to the coalesced state, i.e., when 
the two paths got coalesced. The same holds for the exit step as well. As a result 
the total number of steps spent in the common state is same as
the total number of steps spent in the coalesced state.  
Decomposition into these three states gives a partition of the total evolution time
for the joint process of two dual paths.   
It is important to note that in order to know the state of 
the joint process of two dual paths at the $(j+1)$-th step, 
we don't need to know the path positions at the $(j+1)$-th step exactly. 
It is enough to know the respective positions at the $j$-th step 
and whether the exploration regions (for the $(j+1)$-th step) have overlapped or not.

We set $\tau^{n,C}_0 = 0$ and for $j \geq 1$, the $j$-th entrance time to the common state 
is defined as 
\begin{align}
\label{def:Tau^C}
\tau^{n,C}_j & :=  \inf \{ k+1 > \tau^{n,C}_{j - 1 } : (k+1) \text{-th step is taken in common manner but }
k\text{-th step isn't}\}.
\end{align}
For simplicity, we say that the notation $\tau^{n,C}_j$ denotes the $j$-th entrance to the common state. We note that if $\alpha_1 = \alpha_2$, then $\tau^{n,C}_1$ becomes an {\it absorbing state} and we have $\tau^{n,C}_j = \infty$ for all $j > 1$.
Similarly, we set $\tau^{n,D}_0 = \tau^{n,I}_0 = 0$ and for $j \geq 1$, 
the respective $j$-th re-entrance times for the other two states are defined as
\begin{align}
\label{def:Tau^D_Tau^I}
\tau^{n,D}_j & :=   \inf \{ k+1 > \tau^{n,D}_{j - 1 } : 
(k+1) \text{-th step is taken in dependent manner but }k\text{-th step isn't} \} \text{ and }\nonumber\\
\tau^{n,I}_j & :=  \inf \{ k+1 > \tau^{n,C}_{j - 1 } : 
(k+1) \text{-th step taken in independent manner but }k\text{-th step isn't} \}.
\end{align} 
We  consider the filtration 
\begin{align}
\label{def:Filtration_1}
\mathcal{F}_k := \sigma \bigl ( \{ (B_{(x,t)}, R_{(x,t)}, \text{Br}^{n}_{(x,t)} ) 
: (x,t) \in \Z^2, t \in [-k, 0] \} \bigr ),
\end{align}
and observe that for any $j \geq 1$, the r.v.  $\tau^{n,c}_j$ is a stopping time 
w.r.t. the filtration $\{\overline{\mathcal{F}}_k : k \geq 0\}$ 
whereas the r.v.'s  $\tau^{n,D}_j$ and the r.v. $\tau^{n,I}_j$ are not stopping times
 w.r.t. this filtration. Therefore, we extend the filtration as
\begin{align}
\label{def:Filtration_2}
\overline{\mathcal{F}}_k := \sigma\{ \mathbf{1}_{F^1}, \cdots, \mathbf{1}_{F^k}, 
(B_{(x,t)}, R_{(x,t)}, \text{Br}^{n}_{(x,t)} ) 
: (x,t) \in \Z^2, t \in [-k, 0] \},
\end{align}
where for any event $A$, the notation $\mathbf{1}_{A}$ denotes the indicator r.v. 
corresponding to the event $A$. We note that for any $j\geq 1$, all the random variables $\tau^{n,C}_j$, 
$\tau^{n,D}_j$, $\tau^{n,I}_j$ are stopping times w.r.t. the filtration defined as in 
(\ref{def:Filtration_2}). The next remark is straightforward to observe.

\begin{remark}
\label{rem:DepState_CoalState}
We observe that for any $j \geq 1$, the $j$-th entrance to the common state 
must precede with an entrance to the dependent state, i.e., the joint process can enter the common state only from the dependent state.  
\end{remark}
For $j \geq 1$, the number of steps taken to exit after the $j$-th entrance to the independent state, i.e., $\tau^{n,I}_{j}$  is defined as 
\begin{align}
\label{def:Tau^C}
\tau^{n,I}_{j,\text{next}} :=  \inf\{ \tau^{n,D}_{j_1} \wedge \tau^{n,C}_{j_2} : 
\tau^{n,D}_{j_1} , \tau^{n,C}_{j_2} >  
\tau^{n,I}_{j} \text{ for some }j_1, j_2 \geq 1\}.
\end{align}
Similarly, the step $\tau^{n,C}_{j,\text{next}}$ is defined.

\begin{lemma}
\label{lem:CoalescedStateTime}
Fix $k \in \N$ and consider $\alpha_1 = l$ and  $\alpha_2 = r $. Then we have that the r.v.s 
$\{ \tau^{n,C}_{k, \text{next}} - \tau^{n,C}_k : k \geq 1\}$ are {\it independent}
(need not be i.i.d.) with 
\begin{align*}
\E(\tau^{n,C}_{k, \text{next}} - \tau^{n,C}_k ) \geq \frac{1-p \beta_n}{ p \beta_n}.
\end{align*}
\end{lemma}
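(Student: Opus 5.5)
The plan is to analyse a single sojourn in the common state directly, using the fact that while the leftmost and rightmost dual paths are coalesced they evolve as one dual path driven by fresh rows of the environment. Fix $k \geq 1$ and condition on $\overline{\mathcal{F}}_{\tau^{n,C}_k - 1}$. By definition, at step $\tau^{n,C}_k - 1$ both paths sit at a common vertex $(w, -(\tau^{n,C}_k - 1))$. Every subsequent common step is determined only by the vectors $\Lambda_{(x,t)}$ in rows $t < -(\tau^{n,C}_k - 1)$, which are independent of $\overline{\mathcal{F}}_{\tau^{n,C}_k-1}$. The indicators $\mathbf{1}_{F^i}$ for those steps are also functions of the paths and of these rows.

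First I will identify exactly when the coalesced pair separates. Fix a step taken in the common manner from a coalesced position $(u, -j)$.
\begin{itemize}
\item[(i)] If $u \notin \Z$, then Case 1 of Subsection \ref{subsec:DualPath_Evolution_Marginal} says the $l$-step and the $r$-step coincide, so the pair stays coalesced.
\item[(ii)] If $u \in \Z$, then the $l$-step and $r$-step differ only if $(u,-j-1) \in V$ and $\text{Br}^n_{(u,-j-1)} = 1$. This is the situation where the drift variables in (\ref{def:DriftIncrement}) are nonzero.
\end{itemize}
Hence, conditionally on everything explored up to step $j$, the probability that the pair splits at step $j+1$ is at most $\P(B_{(u,-j-1)}=1,\ \text{Br}^n_{(u,-j-1)}=1) = p\beta_n$. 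Here I use that row $-j-1$ has not been explored yet and that $B$ and $\text{Br}^n$ are independent.

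Second, let $G$ denote the number of steps spent in the common state after the $k$-th entrance. Iterating the one-step bound with the tower property over the successive fresh rows gives, for every $m \geq 1$,
\[
\P\bigl(G \geq m \mid \overline{\mathcal{F}}_{\tau^{n,C}_k -1}\bigr) \geq (1-p\beta_n)^{m}.
\]
Summing over $m$ then yields
\[
\E\bigl(G \mid \overline{\mathcal{F}}_{\tau^{n,C}_k-1}\bigr) \geq \sum_{m\geq 1}(1-p\beta_n)^m = \frac{1-p\beta_n}{p\beta_n}.
\]
Taking expectations gives the claimed bound on $\E(\tau^{n,C}_{k,\text{next}}-\tau^{n,C}_k)$. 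Some care is needed with the off-by-one convention between entering the coalesced state and entering the common state. If the first common step can itself split the pair, the bound is exactly the one stated with this indexing; I will check the indexing against the definition of $\tau^{n,C}_{k,\text{next}}$.

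Third, for independence I will use the strong Markov property at the stopping times $\tau^{n,C}_k$ with respect to $\overline{\mathcal{F}}$. The sojourn length $\tau^{n,C}_{k,\text{next}} - \tau^{n,C}_k$ is a measurable function of the fresh rows below the entrance level together with the starting position $w$. Its conditional law depends on the past only through whether $w \in \Z$, which is why the variables need not be identically distributed. The sojourns occupy disjoint time blocks, separated by dependent and independent excursions, so I will condition successively on $\overline{\mathcal{F}}_{\tau^{n,C}_{k}}$ and apply this decomposition inductively.

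I expect the main obstacle to be this independence claim. Strictly, the $k$-th sojourn length is independent of the past only given the integrality type of $w$. I will handle this in one of two ways: either condition on $\mathbf{1}_{\{w\in\Z\}}$, or observe that only the uniform conditional lower bound is used later, so that conditional independence given the entrance positions suffices.
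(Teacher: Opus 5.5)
Your bound on $\E(\tau^{n,C}_{k,\text{next}}-\tau^{n,C}_k)$ is correct, and it is the same geometric-domination idea as the paper's: leaving the common state requires a branching at an integer site, which has conditional probability $p\beta_n$. The paper phrases this through the number of renewal steps (visits to $\Z$) before the first branching. You instead bound the per-step split probability by $p\beta_n\mathbf{1}_{\{u\in\Z\}}\le p\beta_n$, using fresh rows. Your indexing concern resolves in your favour. Write $G_k:=\tau^{n,C}_{k,\text{next}}-\tau^{n,C}_k$. Every sojourn contains at least one common step, so $\P(G_k\ge m\mid\cdot)\ge(1-p\beta_n)^{m-1}$, which gives even $\E(G_k)\ge 1/(p\beta_n)$.

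The genuine gap is the independence claim. The paper's proof does not address it, but your proposal does not establish it either. The strong Markov property at the entrance times $\tau^{n,C}_k-1$ only shows that the conditional law of $G_k$ given the past is a function of $\mathbf{1}_{\{w_k\in\Z\}}$. That is a Markov-type statement, not independence. Both of your fallbacks change the statement: conditioning on the integrality types, or keeping only the uniform conditional lower bound.

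The missing idea is to regenerate at the exit steps rather than the entrance steps. At the last common step of the $k$-th sojourn, the coalesced path sits at some $(u,-j)$ with $u\in\Z$. The split occurs on $\{B_{(u,-j-1)}=1,\ \text{Br}^n_{(u,-j-1)}=1\}$ and puts the paths at $u-J^-_{(u,-j-1)}/2$ and $u+J^+_{(u,-j-1)}/2$. The variables $J^\pm_{(u,-j-1)}$ depend only on $\{B_{(u\pm i,-j-1)}: i\ge 1\}$. So they are i.i.d. geometric and independent of that event and of all rows above. Every later step uses only rows strictly below $-j-1$.

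By integer translation invariance, the whole evolution after the $k$-th exit has a law not depending on $k$. This includes the integrality of the next entrance point $w_{k+1}$ and all later sojourn lengths. It is also independent of the $\sigma$-field of everything up to the $k$-th exit, which contains $G_1,\dots,G_k$ (with the obvious convention when $\tau^{n,C}_{k+1}=\infty$). Induction then gives mutual independence. In fact $G_2,G_3,\dots$ are identically distributed, and only $G_1$, whose entrance point depends on the initial configuration, can differ; this is exactly the ``need not be i.i.d.'' caveat. So your suspicion that independence holds only conditionally on the integrality of $w_k$ is unfounded: that integrality is itself produced by the fresh post-split excursion.
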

\begin{proof} Post the $\tau^{n,C}_k$-th step, i.e., post hitting 
the common state we consider the evolution of the joint process of coalesced 
dual paths and we observe that an exit from the common state can happen only 
through a branching event. Further, branching event can occur only 
if the coalesced dual paths hit a lattice point in $\Z^2$ which is a $\beta$
renewal step. A branching event occurs at a $\beta$ renewal step with probability 
$p \beta_n$. Therefore, post the $\tau^{n,C}_k$-th step, 
the expected number of renewal steps encountered 
before branching is given by $\frac{1 - p\beta_n}{ p \beta_n}$ 
(since branching can occur at the zero-th step as well). Clearly, 
the expected total number of steps required after $\tau^{n,C}_k$
to have a branching is bounded below by $\frac{1 - p\beta_n}{ p \beta_n}$. 
This completes the proof.          
\end{proof}

The next lemma says that, each time the joint process enters to the dependent state, 
it exits from that state very quickly. Further, the probability that the 
next state would be the common state has an uniform lower bound.    
\begin{lemma}
\label{lem:DependentStateTime}
Fix $k \in \N$. We have that 
$$
\E( \tau^{n,D}_{k, \text{next}} - \tau^{n,D}_k  )\leq \frac{1}{(1-p)^3}. 
$$ 
Further, the probability that $\tau^{\epsilon,D}_{k, \text{next}}$ is an entrance time to 
a common state is bounded below by $(1-p)^3$.
\end{lemma}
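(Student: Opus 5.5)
The plan is to derive both assertions from Lemma \ref{lem:CoalescenceProb} together with the strong Markov property at the stopping times of the filtration $\{\overline{\mathcal{F}}_k : k \geq 0\}$ (see (\ref{def:Filtration_2})). Fix $k$ and write $\tau := \tau^{n,D}_k$, which is a stopping time for $\overline{\mathcal{F}}$. For $m \geq 0$, consider the event that the steps $\tau, \tau+1, \dots, \tau+m$ are all taken in the dependent manner. On this event, $F^{\tau+m+1}$ (in the indexing of (\ref{def:Event_Fk})) has occurred: the two dual paths are at distinct positions at step $\tau+m$ and their next exploration regions overlap.

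Lemma \ref{lem:CoalescenceProb} then says that, conditionally on $\overline{\mathcal{F}}_{\tau+m}$, with probability at least $(1-p)^3$ the paths meet at the next step. If they meet, the following step is taken in the common manner, so the joint process leaves the dependent state. To use the lemma I must condition on the extended $\sigma$-field $\overline{\mathcal{F}}_{\tau+m}$ and not merely on $F^{\tau+m+1}$. The proof of Lemma \ref{lem:CoalescenceProb} gives this. Given the current positions $(w_1,w_2)$ and the indicator of $F$, the information revealed so far about level $-(\tau+m+1)$ is only the representation (\ref{def:EventEk_ExplorationRepresentation}) (the absence of open sites in a strip). It leaves the variables at the at most three sites $(w_1,\cdot),(w_2,\cdot),(w,\cdot)$ on that level unexplored. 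Hence the conditional probability that all three are closed is still at least $(1-p)^3$, uniformly in the past.

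It follows that
\begin{align*}
\P\bigl(\tau^{n,D}_{k,\text{next}} - \tau > m+1 \mid \overline{\mathcal{F}}_\tau\bigr) \leq \bigl(1-(1-p)^3\bigr)^{m+1}.
\end{align*}
This is obtained by iterating the conditional bound over successive steps, using the tower property. Summing the geometric tail gives
\begin{align*}
\E\bigl(\tau^{n,D}_{k,\text{next}} - \tau^{n,D}_k\bigr) \leq \sum_{m\geq 0}\bigl(1-(1-p)^3\bigr)^m = \frac{1}{(1-p)^3}.
\end{align*}

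For the second assertion, I consider the first step of the dependent sojourn, at which $F$ holds. By the same conditional bound, with probability at least $(1-p)^3$ the two paths coalesce immediately, and then the next state entered is the common state. The exit time $\tau^{n,D}_{k,\text{next}}$ is therefore an entrance time to the common state with probability at least $(1-p)^3$. The only other exit is to the independent state. I should also check that coalescence cannot be followed by remaining in the dependent state; this holds because the dependent state requires distinct positions, so the claim follows.

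The main obstacle is making the conditional independence rigorous. One must verify that conditioning on $\overline{\mathcal{F}}_{\tau+m}$, which includes the indicators $\mathbf{1}_{F^j}$ and all variables at levels above $-(\tau+m)$, does not bias the three relevant Bernoulli variables at level $-(\tau+m+1)$. I would argue that the only information about that level contained in $\overline{\mathcal{F}}_{\tau+m}$ is $\mathbf{1}_{F^{\tau+m+1}}$. By (\ref{def:EventEk_ExplorationRepresentation}), this indicator is measurable with respect to the sites strictly between $w_1$ and $w_2$ other than $w$. This is the same decomposition used in Lemma \ref{lem:CoalescenceProb}, and it handles the integer/non-integer position cases uniformly.
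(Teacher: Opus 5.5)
Your route is the paper's route: its proof is the one sentence saying that Lemma \ref{lem:CoalescenceProb} makes the dependent sojourn dominated by a geometric variable with success probability $(1-p)^3$. Your tower-property iteration, tail summation and first-step argument simply spell that out.

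The gap is in the step you yourself flag as the main obstacle, and your way of closing it is false. $\mathbf{1}_{F^{j+1}}$ is not measurable with respect to ``the sites strictly between $w_1$ and $w_2$ other than $w$'', because there is no fixed site $w$. The representation (\ref{def:EventEk_ExplorationRepresentation}) is a union over all admissible $(j_1,j_2)$, that is, over every possible location of the distinguished site, and these events overlap. In fact $F^{j+1}$ is exactly the event that at most one lattice site strictly between the two current positions is open at the new level. Conditioning on it biases every one of those sites.

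Concretely, suppose both positions are half-integers with $r$ lattice sites strictly between them. The two dual steps coincide iff none of these sites is open, so, whatever the past,
\[
\P\bigl(\text{meet}\mid F^{j+1},\ \text{past}\bigr)=\frac{(1-p)^r}{(1-p)^r+rp(1-p)^{r-1}}=\frac{1-p}{1-p+rp}.
\]
This is already below $(1-p)^3$ for $r=4$, $p=1/10$, and it tends to $0$ as $r\to\infty$. So there is no uniform per-step bound, and neither the geometric domination with parameter $(1-p)^3$ nor the exit-to-common bound follows. Separately, your indices are shifted by one: the $(j+1)$-th step is dependent iff $F^{j+1}$ occurs, and $\mathbf{1}_{F^{j+1}}$ is not $\overline{\mathcal{F}}_j$-measurable. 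The conditioning should be on $\overline{\mathcal{F}}_j$ together with $\mathbf{1}_{F^{j+1}}$; this part is cosmetic.

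The defect is not peculiar to your write-up. The proof of Lemma \ref{lem:CoalescenceProb} makes the same identification when it treats $B_{(w,-k-1)}$ as a fresh Bernoulli$(p)$ given $F^{k+1}$, and the paper's proof of this lemma inherits it. By my own rough estimate (worth checking yourself), the constants themselves also fail. Take small $p$ and half-integer starting points with about $1/p$ lattice sites between them. Then:
\begin{itemize}
\item $F^1$ has probability about $2/e$.
\item Given $F^1$, the paths fail to meet with probability about $1/2$, landing roughly $1/p$ apart again.
\item From there the next exploration regions overlap with probability bounded away from $0$ (about $0.58$ as $p\downarrow 0$).
\end{itemize}
So the first dependent sojourn has expected length at least about $1.2$, whereas $(1-p)^{-3}\approx 1+3p$. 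It also ends in the independent state with probability bounded away from $0$.

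What one can still hope to prove, and what Lemma \ref{lem:DependentStateTotalTime} actually needs, is weaker:
\begin{itemize}
\item a gap-uniform positive lower bound on the probability that a dependent sojourn ends within boundedly many steps;
\item a positive lower bound on exiting to the common state, but only for entrances with bounded gap;
\item the fact that $\P(F^{j+1})$ decays exponentially in the gap, so large-gap entrances contribute negligibly once the gap threshold is sent to infinity after $n\to\infty$.
\end{itemize}
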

\begin{proof}
Lemma \ref{lem:CoalescenceProb} gives us that $(\tau^{n,D}_{k, \text{next}}
 - \tau^{n,D}_k)$, i.e., the number of steps elapsed at the $k$-th visit 
 to the dependent state, followed by a visit to the common state,  
 is dominated by a geometric r.v. with success parameter $(1-p)^3$ 
 and this completes the proof.  
\end{proof}

Using, Lemma \ref{lem:CoalescedStateTime} and Lemma \ref{lem:DependentStateTime}, 
we finally show that the total time, i.e., the total number of steps, 
the joint process spends in the dependent state, under diffusive scaling converges to zero in probability.
We introduce some notation. 

Fix $T > 0$. Let $C^n_T, D^n_T, I^n_T : [0,\lfloor T /\epsilon_n^2  \rfloor] \mapsto R^+$ 
respectively defined on the integer points for $j \in [1, T/\epsilon_n^2 ]\cap \Z$ as 
\begin{align}
\label{def:TimeProcesses}
C^n_T(j) & := \sum_{k = 1}^{j} \mathbf{1}_{\{ k-th \text{ step of the joint process is }\text{ taken in common manner}\}} , \nonumber\\
D^n_T(j) & := \sum_{k = 1}^{j} \mathbf{1}_{\{ k-th \text{ step of the joint process is}
 \text{ taken in dependent manner}\}}  ,                \nonumber \\
I^n_T(j) & := \sum_{k = 1}^{j} \mathbf{1}_{\{ k-th \text{ step of the 
joint process taken in independent manner}\}} ,
\end{align} 
and for non-integer points they are joined linearly. For any $j \in \N$, 
$C^{n}_{T}(j)$ denotes the total number of steps of the joint process taken 
in common manner, or the total number of steps that the joint process spends in
the common state during the first $j$ steps. Similarly, $I^{n}_{T}(j)$ and $D^{n}_{T}(j)$
are defined. As a result we have 
$$
I^{n}_{T}(j) +  D^{n}_{T}(j) + C^{n}_{T}(j) = j \text{ for any } j \in [0,T /\epsilon_n^2]\cap \N.
$$
The corresponding scaled versions are denoted respectively  by 
$$
C_{n,T} := S_n(C^n_T), 
D_{n,T} := S_n(D^n_T) \text{ and }
I_{n,T} := S_n(I^n_T).
$$
By definition we have $C_{n,T}, D_{n,T}, I_{n,T} : [0,T] \mapsto \R^+$. 
Because of our discrete setting, for any $ s \in [0,T]$ we have 
$$
|C_{n,T}(s) +  D_{n,T}(s) + I_{n,T}(s) - s| \leq 3\epsilon_n.
$$  
\begin{lemma}
\label{lem:DependentStateTotalTime}
Fix any $T> 0$. As $n \to \infty$, for any $s \in [0,T]$ we have 
$$
D_{n,T}(s) \to 0 \text{ in probability}.
$$
\end{lemma}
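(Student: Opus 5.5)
Since $D_{n,T}$ is non-decreasing in $s$, it suffices to show $D_{n,T}(T)\to 0$ in probability, i.e. $\epsilon_n^2 D^n_T(\lfloor T/\epsilon_n^2\rfloor)\to 0$. The plan is to write the dependent time as a sum over visits, $D^n_T(\lfloor T/\epsilon_n^2\rfloor)\le \sum_{k=1}^{N_n}(\tau^{n,D}_{k,\text{next}}-\tau^{n,D}_k)$, where $N_n$ is the number of entrances to the dependent state before step $\lfloor T/\epsilon_n^2\rfloor$. By Lemma \ref{lem:DependentStateTime} and the strong Markov property with respect to $\{\overline{\mathcal{F}}_k\}$, each summand is stochastically dominated by a geometric variable with parameter $(1-p)^3$, uniformly in the past. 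It is therefore enough to show $\epsilon_n^2 N_n\to 0$ in probability.

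To control $N_n$, I split the visits to the dependent state according to the state entered next. By Lemma \ref{lem:DependentStateTime}, each exit is into the common state with conditional probability at least $(1-p)^3$. The number of entrances to the common state before time $T/\epsilon_n^2$ is at most one plus the number of exits from the common state. When $\alpha_1=\alpha_2$ the common state is absorbing, so only one such entrance is possible. When $\{\alpha_1,\alpha_2\}=\{l,r\}$, every exit requires a branching event at a $\beta$-renewal step, which happens with probability $p\beta_n$. The number of renewal steps in $T/\epsilon_n^2$ steps is at most $T/\epsilon_n^2$, so the number of branchings is stochastically dominated by a $\mathrm{Bin}(\lfloor T/\epsilon_n^2\rfloor,p\beta_n)$ variable. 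Its mean is of order $T\beta_n/\epsilon_n^2=O(T/\epsilon_n)$ because $\beta_n/\epsilon_n\to b$. Hence the number of common entrances, $N^C_n$, satisfies $\epsilon_n^2N^C_n\to0$ in probability.

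Given $N^C_n=m$, the number of dependent visits is dominated by the number of trials needed for $m+1$ successes with success probability $(1-p)^3$. A martingale or Chebyshev argument then gives $N_n\le C(N^C_n+1)+o_P(\epsilon_n^{-2})$, which handles these cases.

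The main obstacle is the case where both paths are ordinary, or more generally where $N^C_n$ is small, so that most dependent visits end back in the independent state. I expect the key extra input to be that every failed dependent visit has conditional probability at least $(1-p)^3$ of producing coalescence. Coalescence is absorbing for equal types and costs a branching otherwise, so the total number of dependent visits before time $T/\epsilon_n^2$ is dominated by a sum of at most $N^C_n+1$ independent geometric variables with parameter $(1-p)^3$. This is exactly the bound used above. One must check that the domination holds conditionally on $\overline{\mathcal{F}}_{\tau^{n,D}_k}$. That should follow from the argument of Lemma \ref{lem:CoalescenceProb}: the three relevant sites are unexplored given $F^{k+1}$.

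Combining the pieces, $\epsilon_n^2 D^n_T\le \epsilon_n^2\sum_{i\le C(N^C_n+1)}G_i$, where the $G_i$ are geometric with parameter $(1-p)^3$. This tends to $0$ in probability by Markov's inequality, since its expectation is $O(\epsilon_n^2(1+T\beta_n/\epsilon_n^2))=O(\epsilon_n^2+\beta_n)\to0$.
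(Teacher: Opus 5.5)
Your proposal is correct and follows essentially the paper's argument. In both, exits from the common state require a branching, so there are $O(\beta_n/\epsilon_n^2)$ common visits. By Lemma \ref{lem:DependentStateTime}, dependent visits are at most a constant multiple of these and each lasts a geometrically bounded number of steps, so the scaled dependent time is $O(\epsilon_n^2+\beta_n)\to 0$.

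The differences are minor. You count exits directly by binomial domination of branching events rather than through the sojourn-time bound of Lemma \ref{lem:CoalescedStateTime}, and you treat the absorbing equal-type case explicitly. Your ``given $N^C_n=m$'' step should be read as the martingale/optional-stopping bound you mention, since conditioning on $N^C_n$ would break the geometric domination.
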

\begin{proof}
It is enough to prove Lemma \ref{lem:DependentStateTotalTime} for $s = T$.
This part of the proof is motivated from \cite{EFS17}. Since $C_{n,T}(s)\leq T$ for all $s \leq T$, Lemma \ref{lem:CoalescedStateTime} gives us that 
the expected number of visits to the common state in 
$\lfloor T/\epsilon_n^2 \rfloor$ many steps is bounded above by   
$ T (p \beta_n) /((1 - p \beta_n)\epsilon_n^2)$. By Lemma \ref{lem:DependentStateTime},
 the probability that each visit to the dependent state 
 exits with a visit to the common state is  
bounded below by $(1-p)^3$. As a result, the expected number of 
visits to the common state is at least $(1-p)^3$ 
times the expected number of visits to the dependent state.
Thus the expected number of visits to the dependent state is at most 
$ (T p  \beta_n)/((1 - p \beta_n) (1-p)^3 \epsilon_n^2) $. By Lemma \ref{lem:DependentStateTime} we also have that for each such visit to the dependent state, 
 expected {\it scaled} duration is uniformly bounded above by 
$\epsilon_n^2 / (1-p)^3$. Therefore, (\ref{eq:Limit_b}) gives us that 
$$
\E(D_{n,T}(T)) \leq  ( \frac{T p \beta_n }{(1 - p \beta_n) (1-p)^3 \epsilon_n^2}  \times \frac{\epsilon_n^2}{(1-p)^3 } ) \to 0,
$$
as $n \to \infty$. This completes the proof.
\end{proof}

\section{Proof of Theorem \ref{thm:DoubleNet_Conv}}
\label{sec:ConvDoubleNet}

In this section we will prove Theorem \ref{thm:ConvLeftRight_IntersecTime_Jt} 
to complete the proof of Theorem \ref{thm:DoubleNet_Conv}. In order to prove 
Theorem \ref{thm:ConvLeftRight_IntersecTime_Jt} we need to prove 
some preliminary results.

We consider $\x_1 , \x_2 \in \R^2$ with $\x_1(2) = \x_2(2)$ and w.l.o.g. we assume that 
$\x_1(2) = \x_2(2) = 0$. Corresponding to the choice of $\x_1 , \x_2$, we consider  
$\x^n_{1} , \x^n_{2} \in \widehat{V}$ 
with $\x^n_{1}(2) = \x^n_{2}(2)$  such that 
$$
S_n(\x^n_{i}) \to \x^i\text{ for }i = 1,2\text{ as }n \to \infty.
$$ 
Let $\widehat{\pi}^{i}_r = \widehat{\pi}^{\x^n_i}_r$ denote 
the right dual path starting from $\x^n_i$ for $i = 1,2$. 
Similarly, $\widehat{\pi}^{i}_l$ is defined. 
Let $\{ \widehat{Z}^n(t) : t \geq 0 \}$ denote the process defined as
$$
\widehat{Z}^n(m) =  \bigl (  \sum_{j = 0}^{ m} \mathbf{1}_{\{ \widehat{\pi}^{1}_r(j) = \widehat{\pi}^{2}_l(j) \}} \bigr ) ,
$$
for $m \in \{0\}\cup \N$ and for all non-integer points, we join them linearly. 
Similarly, for the forward process we can consider a sequence of points $\{ \y^n_{1} , \y^n_{2} : n \geq 1\} 
\subseteq \Z^2$ such that the sequence $\{S_n(\y^n_{1}, \y^n_{2}) : n \geq 1\}$ 
converges to $(\y_1, \y_2) \in \R^2$. 
We consider the sequence of rightmost and leftmost forward paths 
$$
\{ (\pi^{1}_r, \pi^{2}_l) = (\pi^{\y^n_1}_r, \pi^{\y^n_2}_l) : n \geq 1 \}
$$ 
and the process $\{ Z^n(t) : t \geq 0 \}$  is similarly defined.
For a pair of leftmost and rightmost paths  (forward as well as dual), 
the next proposition gives joint convergence to left-right Brownian motions 
together with their intersection time.
\begin{proposition}
\label{prop:ConvLeftRight_Intersec}
For any $b \in [0, \infty)$, as $n \to \infty$, the process $S_n \bigl( \widehat{\pi}^{1}_{r} , \widehat{\pi}^{2}_{l} , \widehat{Z}^n \bigr )$ converges in distribution to a limiting process $(\widehat{X}_1, \widehat{X}_2, \widehat{Z})$, where $(\widehat{X}_1, \widehat{X}_2)$ 
is distributed as left-right dual Brownian motions with drift $\bar{b}$, where $\bar{b}$ 
is as in (\ref{def:Drift_Param}),  starting from $\x_1$ and $\x_2$ respectively and
we have
$$
\widehat{Z}(t) = \int_0^t \mathbf{1}_{\{ \widehat{X}_1(s) = \widehat{X}_2(s) \}}ds .
$$  
A similar statement holds for the sequence $\{ S_n(\pi^{1}_{r} , \pi^{2}_{l},
Z^n) : n \geq 1\}$ as well. 
\end{proposition}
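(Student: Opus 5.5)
We follow the strategy of \cite{EFS17}: build a continuous-time decomposition of the joint process into the three states of Section \ref{sec:Div_TotalTime}, identify the limit of each piece, and read off the SDE (\ref{eq:LR_SDE_new}).

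\textbf{Step 1: tightness.} By Proposition \ref{prop:DualPathDiffLimit} each marginal $S_n(\widehat{\pi}^i_\cdot)$ converges, so the pair is tight. The process $S_n(\widehat{Z}^n)$ is $1$-Lipschitz after scaling, hence tight in $C[0,T]$. Take a subsequential limit $(\widehat{X}_1,\widehat{X}_2,\widehat{Z})$ with $\widehat{X}_1\le \widehat{X}_2$; it remains to identify the law.

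\textbf{Step 2: decompose the increments by state.} We write
\begin{align*}
\widehat{\pi}^1_r(m)-\widehat{\pi}^1_r(0)=\sum_{k\le m}\bigl(\mathbf{1}_{I_k}+\mathbf{1}_{D_k}+\mathbf{1}_{C_k}\bigr)\Delta^1_k ,
\end{align*}
where $I_k,D_k,C_k$ are the events that the $k$-th step is taken in the independent, dependent or common manner, and we do the same for $\widehat{\pi}^2_l$. On $C_k$ the paths sit at a common point, so $\mathbf{1}_{C_k}$ coincides with $\mathbf{1}_{\{\widehat{\pi}^1_r(k-1)=\widehat{\pi}^2_l(k-1)\}}$ and $S_n(\widehat{Z}^n)=C_{n,T}$ up to $O(\epsilon_n^2)$.

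\begin{itemize}
\item[(a)] The dependent part is negligible. By Lemma \ref{lem:DependentStateTotalTime}, $D_{n,T}(T)\to0$ in probability. The sum of the dependent increments is a martingale (after subtracting drift terms of order $\beta_n$) whose quadratic variation is bounded by a constant times $\epsilon_n^2 D^n_T$, using geometric moments. Doob's inequality then makes it vanish uniformly.
\item[(b)] The common increments, time-changed by $C^n_T$, form a single martingale $M^c_n$. The independent increments, time-changed by $I^n_T$, give two martingales $M^1_n,M^2_n$ that are conditionally uncorrelated. Uncorrelatedness holds because, by Lemma \ref{lem:independence} and the definition of $F^{k+1}$, the exploration regions are disjoint on $I_k$.
\item[(c)] Drifts. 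The increments $D^{(r)}$ and $D^{(l)}$ occur only at $\beta$-renewal steps. Their compensators are $\pm\frac{\beta_n}{2}\cdot\frac{2(1-p)}{(2-p)^2}$ per step on average, as in the proof of Proposition \ref{prop:DualPathDiffLimit}. Summed over $T/\epsilon_n^2$ steps and scaled by $\epsilon_n$, they give $\pm\bar b t$ regardless of the state split. One subtlety: in the common state a branching separates the paths, so the drift contributions of the two paths differ there, and this is what produces the separation.
\end{itemize}

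\textbf{Step 3: martingale problem.} Passing to the limit, $(C_{n,T},I_{n,T})\to(C,I)$ with $C+I=t$, and $C=\widehat{Z}$. The martingale CLT, using the $L^2$ martingale structure from Proposition \ref{prop:DualPathMartingale} and quadratic variations normalised by $\sigma$, yields independent Brownian motions $B^1,B^2,B^c$ with
\begin{align*}
\widehat{X}_1=\x_1(1)+B^1_{I_t}+B^c_{C_t}-\bar b t,\qquad \widehat{X}_2=\x_2(1)+B^2_{I_t}+B^c_{C_t}+\bar b t .
\end{align*}
The signs here are those appropriate for dual paths running backward. The time $I$ increases only when the paths are apart, since steps away from coalescence are independent except on the negligible dependent set.

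The constraint $0=\int\mathbf{1}_{\{\widehat{X}_1<\widehat{X}_2\}}dC$ is the main obstacle. We need that the limit spends no positive Lebesgue time with $\widehat{X}_1=\widehat{X}_2$ while the discrete paths are apart, and no time in the common state while the limit paths are apart. The second half is immediate: in the common state the positions coincide, and exits happen only through branching with $O(1)$ separation. For the first half we would show that the discrete time spent at distance $\le K$ but in the independent state, which in the limit is time spent at $\widehat{X}_1=\widehat{X}_2$ without being in the common state, is $o(\epsilon_n^{-2})$. We would try to get this from the fact that each dependent visit leads to coalescence with probability at least $(1-p)^3$ (Lemma \ref{lem:DependentStateTime}), combined with a local-time estimate for the independent-state martingale near $0$.

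With these pieces the limit solves (\ref{eq:LR_SDE_new}), whose pathwise uniqueness gives uniqueness of the law. For $b=0$ the same argument yields coalescing Brownian motions, with $C$ equal to the time after coalescence. The forward case $S_n(\pi^1_r,\pi^2_l,Z^n)$ is handled identically, using forward exploration regions.
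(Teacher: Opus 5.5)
You follow the paper's route: the three-state split, Lemma \ref{lem:DependentStateTotalTime}, the renewal LLN for the drift, and identification through (\ref{eq:LR_SDE_new}). You replace the coupling $\psi^{(a)},\psi^{(b)},\psi^{(c)}$ by a martingale CLT. The genuine gap is that steps 2(a)--(b) do not hold as stated.

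Whether step $k$ is dependent or independent is decided by the exploration at level $-k$. These are the same variables that produce $\Delta^1_k,\Delta^2_k$. So $\mathbf 1_{D_k}$ and $\mathbf 1_{I_k}$ are not predictable, the state-restricted sums are not martingale transforms, and their compensators are not $O(\beta_n)$.

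Here is a concrete case. Suppose $\widehat\pi^1_r$ and $\widehat\pi^2_l$ sit at $u+\tfrac12$ and $u+\tfrac52$ after step $k-1$. Then step $k$ is independent iff $(u+1,-k)$ and $(u+2,-k)$ are both open, and a direct computation gives $\E[\Delta^1_k\mathbf 1_{D_k}\mid\overline{\mathcal F}_{k-1}]=p(1-p)/2$. In every configuration this conditional mean is $\ge -O(\beta_n)$, by the Harris inequality, because overlap forces the left path's $J^+$ to be large. So nothing cancels. This configuration occurs right after a branching with probability $2p^2(1-p)^2$. There are of order $\beta_n C_{n,T}(T)/\epsilon_n^2$ branchings before time $T/\epsilon_n^2$, which is the count behind Lemma \ref{lem:DependentStateTotalTime}.

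Hence the compensator of $\epsilon_n\sum_k\mathbf 1_{D_k}\Delta^1_k$ is, up to $o_P(1)$, at least $c(p)\,b\,C_{n,T}(T)$. Its martingale part is negligible by your quadratic-variation bound, but for $b>0$ the dependent displacement does not vanish. Correspondingly, your $M^1_n,M^2_n$ carry the opposite compensator and are not martingales. The convergence $D_{n,T}\to0$ controls dependent \emph{time} at scale $\epsilon_n^2$, not dependent \emph{displacement} at scale $\epsilon_n$.

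You cannot borrow this estimate from the paper. Its representation (\ref{q:SDE_ScaledPaths_1}) classifies a step through $J^{+,(a)},J^{-,(b)}$ of that very step, then credits the increment to $M^{(a)},M^{(b)}$. Its claim $S_n(\mathcal Y^{\text{dep}})\to0$ is justified only by tightness of $S_n(\NN^n,\widehat{\NN}^n)$.

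The two biases cancel path by path, since each dual path is a martingale plus the branching drift. So the statement survives, but you must not split increments by a non-predictable state. Instead:
\begin{itemize}
\item[(i)] Apply the martingale CLT to the full martingales $\xi^i_k:=\Delta^i_k-\E[\Delta^i_k\mid\overline{\mathcal F}_{k-1}]$.
\item[(ii)] Show that $\epsilon_n^2\sum_k\E[\xi^1_k\xi^2_k\mid\overline{\mathcal F}_{k-1}]$, normalised as in $S_n$, equals $C_{n,T}$ up to $o_P(1)$.
\item[(iii)] For (ii), use that off coalescence this conditional covariance decays exponentially in the gap.
\item[(iv)] Also use that non-coalesced steps with gap $\le K$ number $o_P(\epsilon_n^{-2})$. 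Each such step is dependent with conditional probability at least $(1-p)^K$, so Lemma \ref{lem:DependentStateTotalTime} applies.
\end{itemize}
Alternatively, split by the predictable event $\{\text{gap}>K_n\}$ with $K_n\asymp\log(1/\epsilon_n)$.

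Your unresolved ``first half'' then needs no local-time estimate. The martingale part of $\widehat X_2-\widehat X_1$ has bracket $2(t-C_t)$, so the occupation-times formula gives $\int_0^t\mathbf 1_{\{\widehat X_1(s)=\widehat X_2(s)\}}\,d(s-C_s)=0$. The only constraint you need from the discrete model is $\int\mathbf 1_{\{\widehat X_1<\widehat X_2\}}\,dC=0$. That is your branching-exit argument, and it is exactly what the paper checks via the bound by $D_{n,T}(T)$ and the cut-off $\rho_\delta$.
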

Before proving this proposition, we would like to highlight the following remark
which says that Proposition \ref{prop:ConvLeftRight_Intersec} gives joint 
convergence of {\it first} intersection times as well. 

\begin{remark}
\label{rem:ConvLeftRight_Intersec}
For $b > 0$ it was observed in  \cite{SSY24} Section 3.1 
that since $(\widehat{X}_1-\widehat{X}_2)$ is a sticky Brownian motion with
stickiness at $0$,the convergence of $S_n(\widehat{Z}^n)$ to $\widehat{Z}$ 
in distribution implies that any intersection time of $\widehat{X}_1$ and $\widehat{X}_2$ lies in the support of $d\widehat{Z}$. Therefore, Proposition \ref{prop:ConvLeftRight_Intersec} gives us the joint convergence of $S_n(\widehat{\pi}^1, \widehat{\pi}^2)$ and their (scaled) first intersection time to $\widehat{X}_1$ and $\widehat{X}_2$ and their first intersection time. In addition for $b = 0$ 
we observe that Proposition \ref{prop:ConvLeftRight_Intersec} 
gives joint convergence to forward as well as to dual coalescing Brownian paths 
(without any drift) and their first intersection times or coalescing times.
\end{remark}



\subsection{A coupling construction}
To prove Proposition \ref{prop:ConvLeftRight_Intersec} we need to first describe a coupling construction. We consider three independent collections of i.i.d. random vectors given as 
\begin{align*}
& \psi^{(a)} := \{\Lambda^{(a)}_{(x,t)} := ( B^{(a)}_{(x,t)}, R^{(a)}_{(x,t)}, 
\text{Br}^{n,(a)}_{(x,t)} ) : (x,t) \in \Z^2\}, \\
& \psi^{(b)} := \{\Lambda^{(b)}_{(x,t)} := (B^{(b)}_{(x,t)}, R^{(b)}_{(x,t)}, 
\text{Br}^{n,(b)}_{(x,t)}) : (x,t) \in \Z^2 \} \text{ and }\\
& \psi^{(c)} := \{\Lambda^{(c)}_{(x,t)} := (B^{(c)}_{(x,t)}, R^{(c)}_{(x,t)}, 
\text{Br}^{n,(c)}_{(x,t)}) : (x,t) \in \Z^2\}.
\end{align*}
These collections will be used to a produce a coupling construction such that the
rightmost and the leftmost dual paths will be coupled with their {\it independent} counterparts. 
Corresponding to these collections we define $V^{(a)} := \{ (x,t) \in \Z^2 : B^{(a)}_{(x,t)} = 1\}$. Similarly, the collections $V^{(b)}$ and $V^{(c)}$ are defined.  
Based on the collection $V^{(a)}$ and for any $(x,t) \in \Z^2$, the r.v.s 
$J^{+,(a)}_{(x,t)}$ and $J^{-,(a)}_{(x,t)}$ are defined as 
$$
J^{+,(a)}_{(x,t)} := \inf\{k \geq 1 : (x+k,t) \in V^{(a)}\} \text{ and }
J^{-,(a)}_{(x,t)} := \inf\{k \geq 1 : (x-k,t) \in V^{(a)}\}.
$$ 
The random variables 
$J^{+,(b)}_{(x,t)}, J^{-,(b)}_{(x,t)}, J^{+,(c)}_{(x,t)}, J^{-,(c)}_{(x,t)}$ are defined similarly. 
With a slight abuse of notation for $(x,t) \in \Z/2\times \Z$ 
with $x \notin \Z$, we define $J^{+,(a)}_{(x,t)}$ and $J^{-,(a)}_{(x,t)}$  as
\begin{align*}
J^{+,(a)}_{(x,t)} & := \inf\{ k \geq 1 : (x - 1/2 + k, t) \in V^{(a)} \} \text{ and }\\
J^{-,(a)}_{(x,t)} & := \inf\{ k \geq 1 : (x + 1/2 - k, t) \in V^{(a)} \}.
\end{align*} 
The random variables $J^{+,(b)}_{(x,t)}, J^{-,(b)}_{(x,t)}, J^{+,(c)}_{(x,t)}$
and $J^{-,(c)}_{(x,t)}$ are defined similarly.

Fix $(y,0), (x,0) \in \R^2$ with $x < y$. 
We consider $(y^n, 0), (x^n, 0) \in \widehat{V}$ such that 
$$
S_n ( y^n , x^n ) \to (y, x) \text{ as }n \to \infty.
$$
In order to start the dual steps, $\widehat{h}_{n, (l)}(y^n,0)$ 
and $\widehat{h}_{n, (r)}(x^n,0)$, we will use the common collection 
$\psi^{\text{mixed}}_1 := \{ \Lambda^{\text{mixed}}_{(u,-1)} : u \in \Z \}$
which we are going to construct now. Note that $y^n \neq x^n$ with $x^n < y^n$. 
On the event $\{ x^n + J^{+, (a)}_{(x^n , -1)} < y^n - J^{-, (b)}_{(y^n , -1)}\}$ 
we define the collection $\psi^{\text{mixed}}_1$ as below:
\begin{align*}
\Lambda^{\text{mixed}}_{(u,-1)} := 
\begin{cases}
\Lambda^{(a)}_{(u,-1)} & \text{ if } u \leq x^n + J^{+,(a)}_{(x^n , -1)} \\
\Lambda^{(b)}_{(u,-1)} & \text{ if } u \geq y^n - J^{-,(b)}_{(y^n , -1)} \\
\Lambda^{(c)}_{(u,-1)} & \text{ otherwise}.
\end{cases}
\end{align*}
On the complementary event, i.e., on the event $\{ x^n + J^{+, (a)}_{(x^n , -1)} 
\geq  y^n - J^{-,(b)}_{(y^n , -1)}\}$ we observe that the event 
 $\{ J^{+,(a)}_{(x^n , -1)} > j^1_1 , J^{-,(b)}_{(y^n , -1)} > j^1_2 \}$
 must occur for some $j^1_1, j^1_2 \geq 0$ with 
 $y^n - x^n = j^1_1 + j^1_2 + 1$. On the said event we define the collection 
 $\psi^{\text{mixed}}_1$ as below:
\begin{align*}
\Lambda^{\text{mixed}}_{(u,-1)} := 
\begin{cases}
\Lambda^{(a)}_{(u,-1)} & \text{ if } u \leq x^n + j^1_1 \\
\Lambda^{(b)}_{(u,-1)} & \text{ if } u \geq y^n - j^1_2 \\
\Lambda^{(c)}_{(u,-1)} & \text{ otherwise}.
\end{cases}
\end{align*}
Based on the collection $\psi^{\text{mixed}}_1$, the respective 
dual steps are denoted as $\widehat{h}_{(r)}(x^n,0), 
\widehat{h}_{(l)}(y^n,0)$. Next, we are going to define the common collection 
$\psi^{\text{mixed}}_2 := \{ \Lambda^{\text{mixed}}_{(u, -2)} : u \in \Z \}$ 
which will be used to obtain the next step of the joint process. 

On the event $\{ \widehat{h}_{(r)}(x^n,0) = \widehat{h}_{(l)}(y^n,0)\}$ 
the common collection is simply taken as 
$\psi^{\text{mixed}}_2 := \{ \Lambda^{\text{mixed}}_{(u, -2)} = 
\Lambda^{(c)}_{(u, -2)} : u \in \Z \}$.
Next we consider the situation $\widehat{h}_{(r)}(x^n,0) < \widehat{h}_{(l)}(y^n,0)$.
Assuming $(\widehat{h}_{(r)}(x^n,0), \widehat{h}_{(l)}(y^n,0)) = (x^n_1, y^n_1)$,
on the event $\{ x^n_1 + J^{+,(a)}_{(x^n_1,-2)} < y^n_1 - J^{-,(b)}_{(y^n_1,-2)} \}$ 
we define the collection $\psi^{\text{mixed}}_2$ as below:
\begin{align*}
\Lambda^{\text{mixed}}_{(u,-2)} := 
\begin{cases}
\Lambda^{(a)}_{(u,-2)} & \text{ if } u \leq x^n_1 + J^{+,(a)}_{(x^n_1 , -2)} \\
\Lambda^{(b)}_{(u,-2)} & \text{ if } u \geq y^n_1 - J^{-,(b)}_{(y^n_1 , -2)} \\
\Lambda^{(c)}_{(u,-2)} & \text{ otherwise}.
\end{cases}
\end{align*}
As earlier, on the complementary event, i.e., on the event 
$\{ x^n_1 + J^{+,(a)}_{(x^n_1,-2)} \geq y^n_1 - J^{-,(b)}_{(y^n_1,-2)} \}$ 
we observe that 
 $\{ J^{+,(a)}_{(x^n_1 , -2)} > j^2_1 , J^{-,(b)}_{(y^n_1 , -2)} > j^2_2 \}$
 must occur for some $j^2_1, j^2_2 \geq 0$ with 
 $y^n_1 - x^n_1 = j^2_1 + j^2_2 + 1$. On the said event we define the collection 
 $\psi^{\text{mixed}}_2$ as below:
\begin{align*}
\Lambda^{\text{mixed}}_{(u,-2)} := 
\begin{cases}
\Lambda^{(a)}_{(u,-2)} & \text{ if } u \leq x^n_1 + j^2_1 \\
\Lambda^{(b)}_{(u,-2)} & \text{ if } u \geq y^n_1 - j^2_2 \\
\Lambda^{(c)}_{(u,-2)} & \text{ otherwise}.
\end{cases}
\end{align*} 
The corresponding step for the joint process, which is the second step, 
is denoted by $\widehat{h}^2_{(r)}(x^n,0), \widehat{h}^2_{(r)}(y^n,0)$.   
For general steps we can iterate the same coupling construction by method of induction. 

Given the $k$-th step $(\widehat{h}^k_{(r)}(x^n,0), 
\widehat{h}^k_{(r)}(y^n,0)) = (x^n_k , y^n_k)$ we now define
the collection $\psi^{\text{mixed}}_{k+1} := 
\{ \Lambda^{\text{mixed}}_{(u, -k-1)} = \Lambda^{(c)}_{(u, -k-1)} : u \in \Z \}$
which will be used to obtain the next dual step. 
On the event $\{ \widehat{h}^k_{(r)}(x^n,0) = \widehat{h}^k_{(l)}(y^n,0)\}$ 
the common collection is simply taken as 
$\{ \Lambda^{\text{mixed}}_{(u, -k-1)} = 
\Lambda^{(c)}_{(u, -k-1)} : u \in \Z \}$.
Next we consider the situation $\widehat{h}^k_{n, (r)}(x^n,0) < 
\widehat{h}^k_{n, (l)}(y^n,0)$.
Assuming $(\widehat{h}^k_{n, (r)}(x^n,0), \widehat{h}^k_{n, (r)}(y^n,0)) 
= (x^n_k, y^n_k)$, on the event $\{ x^n_k + J^{+,(a)}_{(x^n_k,-k-1)} < 
y^n_k - J^{-,(b)}_{(y^n_k,-k-1)} \}$ we define the collection 
$\psi^{\text{mixed}}_{k+1}$ as below:
\begin{align*}
\Lambda^{\text{mixed}}_{(u,-k-1)} := 
\begin{cases}
\Lambda^{(a)}_{(u,-k-1)} & \text{ if } u \leq x^n_k + J^{+,(a)}_{(x^n_k , -k-1)} \\
\Lambda^{(b)}_{(u,-k-1)} & \text{ if } u \geq y^n_k - J^{-,(b)}_{(y^n_k , -k-1)} \\
\Lambda^{(c)}_{(u,-k-1)} & \text{ otherwise}.
\end{cases}
\end{align*}
As earlier, on the complement event, i.e., on the event 
$\{ x^n_k + J^{+,(a)}_{(x^n_k,-k-1)} \geq y^n_k - J^{-,(b)}_{(y^n_k,-k-1)} \}$
we observe that the event 
 $\{ J^{+,(a)}_{(x^n_k , -k-1)} > j^{k+1}_1 , J^{-,(b)}_{(y^n_k , -k-1)} > j^{k+1}_2 \}$
 must occur for some $j^{k+1}_1, j^{k+1}_2 \geq 0$ with 
 $y^n_1 - x^n_1 = j^{k+1}_1 + j^{k+1}_2 + 1$. On the said event we define the collection 
 $\psi^{\text{mixed}}_{k+1}$ as below:
\begin{align*}
\Lambda^{\text{mixed}}_{(u,-k-1)} := 
\begin{cases}
\Lambda^{(a)}_{(u,-k-1)} & \text{ if } u \leq x^n_k + j^{k+1}_1 \\
\Lambda^{(b)}_{(u,-k-1)} & \text{ if } u \geq y^n_k - j^{k+1}_2 \\
\Lambda^{(c)}_{(u,-k-1)} & \text{ otherwise}.
\end{cases}
\end{align*} 
The corresponding step for the joint process, which is the second step of 
the joint process and denoted by $\widehat{h}^{k+1}_{(r)}(x^n,0), 
\widehat{h}^{k+1}_{(l)}(y^n,0)$.
 
Following Section \ref{sec:Div_TotalTime}, we partition the total evolution time of the joint process obtained through the coupling construction in three categories as follows: 
\begin{definition} 
We say that the $j+1$-th step of the (coupled) joint process is   
\begin{itemize}
\item[(i)] is taken in common manner or in  the common state 
if $\widehat{h}^{j}_{(l)}(y^n,0) = 
\widehat{h}^{j}_{(r)}(x^n,0)$;
\item[(ii)] in independent manner or in the independent state if $\widehat{h}^{j}_{(r)}(x^n,0) \neq 
\widehat{h}^{j}_{(l)}(y^n,0)$ and the event 
$$
\bigl \{ \widehat{h}^{j}_{(r)}(x^n,0)(1) + J^{+,(a)}_{(\widehat{h}^{j}_{(r)}(x^n,0)(1),-j-1)}
< \widehat{h}^{j}_{(l)}(y^n,0)(1) - J^{-,(a)}_{(\widehat{h}^{j}_{(l)}(y^n,0)(1),-j-1)} 
\bigr \} 
$$ 
occurs;
\item[(iii)] in dependent manner or in the dependent state if $\widehat{h}^{j}_{(r)}(x^n,0) \neq 
\widehat{h}^{j}_{ (l)}(y^n,0)$ and we have the occurrence of the event 
$$
\bigl \{ \widehat{h}^{j}_{(r)}(x^n,0)(1) + J^{+(a)}_{(\widehat{h}^{j}_{(r)}(x^n,0)(1),-j-1)}
\geq \widehat{h}^{j}_{(l)}(y^n,0)(1) - J^{-(a)}_{(\widehat{h}^{j}_{(l)}(y^n,0)(1),-j-1)} 
\bigr \} .
$$
\end{itemize} 
\end{definition}
Set $\tau^{n, C}_{\text{coupled},0} = \tau^{n,C}_{\text{coupled},0} = 
\tau^{n,C}_{\text{coupled},0} = 0$. For $j \geq 1$, 
the $j$-th re-entrance time to the common state for the coupled joint process 
has been defined as 
\begin{align*}
\tau^{n,C}_{\text{coupled},j} & :=  
\inf \{ k + 1 > \tau^{n,C}_{\text{coupled},j - 1 } :  
(k+1) \text{-th step is in common state and }k\text{-th step is not} \}.
\end{align*}
Similarly,  for $j \geq 1$, the steps $\tau^{n,D}_{\text{coupled},j}$ and 
$\tau^{n,I}_{\text{coupled},j}$ are defined as the $j$-th entrance times to respective states. Our coupling construction readily implies the next corollary.
\begin{corollary}  
\label{cor:TimeDivisionDist_Equal}
We have 
$$
\bigl \{ \bigl ( \tau^{n,D}_{\text{coupled},j}, \tau^{n,I}_{\text{coupled},j}
, \tau^{n,C}_{\text{coupled},j} \bigr ) : j \geq 1\bigr \} \stackrel{d}{=} 
\bigl \{ \bigl ( \tau^{n,D}_{j}, \tau^{n,I}_{j}
, \tau^{n,C}_{j} \bigr ) : j \geq 1 \bigr \} .
$$
\end{corollary}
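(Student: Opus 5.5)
The plan is to show that the coupled joint process
\[
\bigl\{ (\widehat{h}^k_{(r)}(x^n,0), \widehat{h}^k_{(l)}(y^n,0)) : k \geq 0 \bigr\},
\]
built step by step from the mixed collections $\psi^{\text{mixed}}_{k}$, has the same law as the original joint process $\{ (\widehat{h}^k_{(r)}(x^n,0), \widehat{h}^k_{(l)}(y^n,0)) : k \geq 0\}$ built from the single family $\{\Lambda_{(x,t)}\}$. It should moreover have the same law jointly with the indicator sequence recording which manner (common, dependent, independent) each step is taken in. The claimed equality of the laws of $(\tau^{n,D}_{\text{coupled},j}, \tau^{n,I}_{\text{coupled},j}, \tau^{n,C}_{\text{coupled},j})_{j\ge1}$ and $(\tau^{n,D}_j,\tau^{n,I}_j,\tau^{n,C}_j)_{j\ge1}$ then follows at once. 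These times are the same deterministic functional of the sequence of state labels in both constructions, and each label depends only on the current positions and on whether the exploration regions overlap. This is the event $F^{k+1}$, which in the coupled construction is the event $\{x^n_k + J^{+,(a)} \geq y^n_k - J^{-,(b)}\}$.

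The key step is a one-step claim, proved by induction on $k$. Let $\mathcal{G}_k$ be the $\sigma$-field generated by the mixed collections $\psi^{\text{mixed}}_1,\dots,\psi^{\text{mixed}}_k$ together with the choice variables used (which side of each case split occurred, and the $j^{i}_1,j^i_2$). The claim is that, conditionally on $\mathcal{G}_k$, the row $\psi^{\text{mixed}}_{k+1}$ is a family of i.i.d. copies of $\Lambda$, independent of $\mathcal{G}_k$. Granting this, the rows $\psi^{\text{mixed}}_1,\psi^{\text{mixed}}_2,\dots$ are i.i.d. rows distributed as $\{\Lambda_{(u,-k)}:u\in\Z\}$. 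The joint process is the same measurable map of the rows in both constructions, so equality in law of the whole joint process, and of the state labels, follows.

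To prove the one-step claim, I would fix $(x^n_k,y^n_k)$, which is $\mathcal{G}_k$-measurable. If the paths have coalesced, the row is $\psi^{(c)}$ at level $-k-1$. This row is untouched by earlier steps, since each level uses only its own row of $\psi^{(a)},\psi^{(b)},\psi^{(c)}$, so the claim is immediate. Otherwise the row is a concatenation of three pieces. The first is $\Lambda^{(a)}$ left of the random cut $U=x^n_k+J^{+,(a)}$, or $x^n_k+j_1$ in the overlapping case. The second is $\Lambda^{(b)}$ right of the cut $W=y^n_k-J^{-,(b)}$, or $y^n_k-j_2$. The third is $\Lambda^{(c)}$ in between. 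The cut $U$ is a stopping point for the rightward scan of the $B^{(a)}$-row starting at $x^n_k$, and $W$ is a stopping point for the leftward scan of the $B^{(b)}$-row from $y^n_k$. In the overlap case, the truncations at $x^n_k+j_1$ and $y^n_k-j_2$ are also stopping points, determined by the partial scans meeting. I would then apply the spatial strong Markov property of i.i.d. sequences: conditionally on the scanned values, the unscanned sites of each row are fresh i.i.d. Combining the independent $a$, $b$ and $c$ rows, this gives an i.i.d. row. The distribution of the scanned values themselves matches that of a single row scanned from both ends, because the two scans cover disjoint index sets.

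The main obstacle is the overlap case. There the cut points are defined jointly by both scans, so I would enumerate the event as in (\ref{def:EventEk_ExplorationRepresentation}). I would check, for each fixed $(j_1,j_2)$ with $j_1+j_2+1=y^n_k-x^n_k$, that the event depends only on $\{B^{(a)}\}$ on $[x^n_k+1,x^n_k+j_1]$ and on $\{B^{(b)}\}$ on $[y^n_k-j_2,y^n_k-1]$. These index sets are disjoint, so the glued row has the correct law on each piece of the partition. Summing over the pieces gives the claim.
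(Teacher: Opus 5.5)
Your reduction is the natural way to make the paper's one-line justification (``our coupling construction readily implies'') precise. You show that, given the past, each mixed row $\psi^{\text{mixed}}_{k+1}$ is a fresh i.i.d.\ row. Then the coupled pair is a copy of the original pair, and the state labels, being the overlap indicator of the glued row, come along. Your treatment of the coalesced case and of the non-overlapping case is fine.

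The proof breaks in the overlap case, and not for a bookkeeping reason: the one-step claim is false for the construction as written. Take integer positions $x^n_k<y^n_k$ with $m=y^n_k-x^n_k-1\ge 1$. On $\{J^{+,(a)}_{(x^n_k,-k-1)}>j_1,\ J^{-,(b)}_{(y^n_k,-k-1)}>j_2\}$, the glued row takes, inside the gap, $\Lambda^{(a)}$ on $x^n_k+1,\dots,x^n_k+j_1$ and $\Lambda^{(b)}$ on $y^n_k-j_2,\dots,y^n_k-1$. These are exactly the sites that this event forces to be closed, and since $x^n_k+j_1+1=y^n_k-j_2$ they exhaust the gap. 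So $x^n_k+j_1$ is not a point where the $a$-scan has stopped; the scan is conditioned to go beyond it. The strong Markov property therefore says nothing about the values you actually glue in, and the glued row has no open site strictly between the paths whenever an admissible pair exists. In addition, admissible pairs are in general not unique, so these events do not form a partition. None exists at all on $\{x^n_k+J^{+,(a)}=y^n_k-J^{-,(b)}\}$, where the construction is undefined.

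Concretely, if $y^n_k=x^n_k+2$ the overlap event is sure. The middle site is glued from whichever copy has it closed, so it can be open only when both copies have it open (the undefined case). Hence it is closed with probability at least $1-p^2$ instead of $1-p$. A one-step coalescence requires no open site strictly between the paths. The other variables deciding coalescence (the sites $\le x^n_k$ and $\ge y^n_k$) have the correct law and are independent of the middle site in both constructions. So the coupled pair coalesces in one step with strictly larger probability than the original. For a start with $y^n-x^n=2$ this already gives $\P(\tau^{n,C}_{\text{coupled},1}=2)\neq\P(\tau^{n,C}_1=2)$, whatever completion is chosen on the undefined event.

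So no verification of the kind you propose can succeed for the paper's coupling. What is missing is a corrected overlap rule; with one, your induction goes through verbatim. For instance, on $\{x^n_k+J^{+,(a)}\ge y^n_k-J^{-,(b)}\}$, keep $\Lambda^{(a)}$ on $(-\infty,x^n_k]$ and $\Lambda^{(b)}$ on $[y^n_k,\infty)$, both of which are independent of this event. Fill the gap with an independent sample, built from $\psi^{(c)}$, of an i.i.d.\ row on the gap conditioned to contain at most one open site. Since $\P(J_1+J_2\ge m+1)$ equals the probability of at most one open site among $m$ sites, this together with your non-overlap computation yields an i.i.d.\ row. Its overlap indicator is exactly the coupled label. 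A genuinely adaptive exploration would also work: every site is read from exactly one of the three copies, and the choice depends only on values already read.
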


Let $\{ \beta^{(l)}_\ell = \beta^{(l)}_\ell(y^n,0) : \ell \geq 1 \}$
denote the sequence of renewal steps for the process 
$\{ \widehat{h}^{j}_{(l)}(y^n,0) : j \geq 1\}$. Similarly, the sequence  
$\{ \beta^{(r)}_\ell = \beta^{(r)}_\ell(x^n,0) : \ell \geq 1 \}$ denotes 
the sequence of renewal steps for the process 
$\{ \widehat{h}^{j}_{(r)}(x^n,0) : j \geq 1\}$.

We mention here that our proof idea is motivated from that of \cite{EFS17}.
We would like to consider the evolution of a dual path while being in the independent 
state during different visits and represent it as an evolution of 
the $\alpha$-type dual path Markov chain over the total time spent by the joint process in 
the independent state. We would like to  apply similar techniques for 
multiple visits to the common state as well. However, there is one crucial issue. 
As the $\alpha$-type dual path is a Markov chain
and at each step the increment depends on whether the chain starts from a site in 
$\Z$ or not, we cannot simply combine the increments of the underlying 
Markov chain over multiple visits to the independent (coalesced) state 
and represent it as evolution of the same Markov chain 
over the total time spent in that state. To deal with this, during each visit
to the coalesced state or to the independent state, we consider 
the time spent being within renewal blocks 
{\it only}. The remaining steps in these two states are adjusted as being spent 
in the {\it extended} dependent state. Fix any $T > 0$ and to 
keep track of the proposed modification we need 
to update the definitions of the processes $I^n_T, D^n_T$ and $C^n_T$ 
as follows.

For $n \in \N$ the processes  
$\bar{I}^{n,(r)}_T, \bar{I}^{n,(l)}_T, \bar{D}^{n,(r)}_T, \bar{D}^{n,(l)}_T$ and 
$\bar{C}^{n}_T$ are defined over the domain $[0, \lfloor T/\epsilon^2 \rfloor ]$ as below.
For any $t \in [0, \lfloor T/\epsilon^2 \rfloor]\cap \N$  
\begin{align*}
\bar{I}^{n,(r)}_T(t) & := \sum_{k = 1}^t \mathbf{1}_{\{ k \in [\beta^{(r)}_\ell , \beta^{(r)}_{\ell + 1}] \text{ and } [\beta^{(r)}_\ell , \beta^{(r)}_{\ell + 1}]
\subset [\tau^{n,I}_{k}, \tau^{n,I}_{k,\text{next}})
\text{ for some }\ell,k \geq 1\}} \\
\bar{I}^{n,(l)}_T(t) & := \sum_{k = 1}^t \mathbf{1}_{\{ k \in [\beta^{(l)}_\ell , 
\beta^{(l)}_{\ell + 1}] \text{ and } [\beta^{(l)}_\ell , \beta^{(l)}_{\ell + 1}]
\subset [\tau^{n,I}_{k}, \tau^{n,I}_{k,\text{next}})
\text{ for some }\ell,k \geq 1\}} \\
\bar{C}^{n}_T(t) & := \sum_{k = 1}^t \mathbf{1}_{\{ k \in [\beta^{(r)}_{\ell_1} , 
\beta^{(r)}_{\ell_1 + 1}] \text{ and } [\beta^{(r)}_{\ell_1} , \beta^{(r)}_{\ell_1 + 1}]
\subset [\tau^{n,C}_{k}, \tau^{n,C}_{k,\text{next}})
\text{ for some }\ell_1,k \geq 1 \}}\\
& \qquad \qquad = \sum_{k = 1}^t \mathbf{1}_{\{ k \in [\beta^{(l)}_{\ell_2} , 
\beta^{(l)}_{\ell_2 + 1}] \text{ and } [\beta^{(l)}_{\ell_2} , 
\beta^{(l)}_{\ell_2 + 1}] \subset [\tau^{n,C}_{k}, \tau^{n,C}_{k,\text{next}})
\text{ for some }\ell_2, k \geq 1 \}} \\
\bar{D}^{n,(r)}_T(t) & := \sum_{k = 1}^t \bigl ( 1 - \mathbf{1}_{\{ \bar{I}^{n,r}_T(t)
- \bar{I}^{n,r}_T(t-1) = 1\}}\vee \mathbf{1}_{\{ \bar{C}^{n}_T(t) - 
\bar{C}^{n}_T(t-1) = 1\}} \bigr )\text{ and }\\
\bar{D}^{n,(l)}_T(t) & := \sum_{k = 1}^t \bigl ( 1 - \mathbf{1}_{\{ 
\bar{I}^{n,l}_T(t) > \bar{I}^{n,l}_T(t-1) \}} \vee \mathbf{1}_{\{ 
\bar{C}^{n}_T(t) > \bar{C}^{n}_T(t-1)\}} \bigr ) .
\end{align*}
By definition, for any $\alpha \in \{ l , r\}$ and for any $t \in [0,T]\cap \N$
we have $\bar{I}^{n,(\alpha)}_T(t) + \bar{C}^{n}_T(t) + \bar{D}^{n,(\alpha)}_T(t) = t$.
Based on the collection $\psi^{(a)}$ and for $\alpha \in \{l, r\}$, we consider 
the Markov chain $\{ M^{(a)}_{(\alpha),k} : k \geq 1 \}$ which represents
the sequence of $x$ coordinates of successive steps of 
the $\alpha$-type dual path starting from $(0,0)$. Similarly, the Markov chains  
$\{ M^{(b)}_{(\alpha),k} : k \geq 1 \}$ and $\{ M^{(c)}_{(\alpha),k} : k \geq 1 \}$ 
are defined.  
We observe that by construction, for any $\alpha \in \{l, r\}$ the three Markov chains, $\{ M^{(a)}_{(\alpha),k} : k \geq 0\}$
and $\{ M^{(b)}_{(\alpha),k} : k \geq 0\}$ and $\{ M^{(c)}_{(\alpha),k} : k \geq 0\}$ 
are independent. We are now ready to proceed with the proof of Proposition \ref{prop:ConvLeftRight_Intersec}.

\noindent {\bf Proof of Proposition \ref{prop:ConvLeftRight_Intersec}:}
Here we prove Proposition \ref{prop:ConvLeftRight_Intersec} for dual paths. 
The argument for leftmost and rightmost forward paths is similar and in fact, it is slightly 
simpler. In Remark \ref{rem:ForwardPathConvergence} 
we mention the minor modification that is required to deal with forward paths.   
Considering the evolution of the process $\{ \widehat{h}^{k}_{(r)}(x^n,0) : k \geq 0 \}$ in the coupled construction, 
the functions $\bar{I}^{n,r}_T, \bar{D}^{n,r}_T, \bar{C}^{n}_T$
partitions the evolution during $\lfloor T/\epsilon^2_n \rfloor$ many steps into three categories of which, we say that $\bar{D}^{n,r}_T$ denotes the total time spent
in the {\it extended} dependent state during first $\lfloor T/\epsilon^2_n \rfloor$ many steps. We need to analyse the evolution of the joint process during 
its tenure in the {\it extended} dependent state. The total time spent
in the {\it extended} dependent state by the process 
$\{ \widehat{h}^{k}_{(l)}(y^n,0) : k \geq 0 \}$ is given by $\bar{D}^{n,l}_T$.
We highlight that the time spent in the extended 
dependent state by the process $\{ \widehat{h}^{k}_{(l)}(y^n,0) : k \geq 0 \}$
need not be same as that of for the process $\{ \widehat{h}^{k}_{(r)}(x^n,0) : k \geq 0 \}$.
The evolution of the joint process during the stay at 
extended dependent state is defined as the processes $\{ 
\mathcal{Y}^{\text{dep}}_{(l)}(j), \mathcal{Y}^{\text{dep}}_{(r)}(j) : j \geq 1 \}$
which are defined as 
\begin{align*}
\mathcal{Y}^{\text{dep}}_{(l)}(j) & := \sum_{k=0}^{j-1} \mathbf{1}_{\{\bar{D}^{n,(l)}_T(k+1)>
\bar{D}^{n,(l)}_T(k) \}} (\widehat{h}^{k+1}_{(l)}(y^n,0)(1) - 
\widehat{h}^{k}_{(l)}(y^n,0)(1)) \text{ and } \\
\mathcal{Y}^{\text{dep}}_{(r)}(j) & := \sum_{k=0}^{j-1} \mathbf{1}_{\{\bar{D}^{n,(r)}_T(k+1)>
\bar{D}^{n,(r)}_T(k) \}} (\widehat{h}^{k+1}_{(r)}(x^n,0)(1) - \widehat{h}^{k}_{(r)}(x^n,0)(1)). 
\end{align*}

We define the scaled version of the 
processes $\bar{C}^{n}_{T},  \bar{D}^{n,(\alpha)}_T , \bar{I}^{n,(\alpha)}_T$
as
$$
(\bar{C}_{n,T}, \bar{D}^{(\alpha)}_{n,T}, \bar{I}^{(\alpha)}_{n,T}) := 
S_n ( \bar{C}^{n}_{T},  \bar{D}^{n,(\alpha)}_T , \bar{I}^{n,(\alpha)}_T)
\text{ for }\alpha \in \{l, r\}.
$$
Our coupling construction gives us that for the joint distribution of the scaled 
 dual paths  at any $s \in [0,T]$ we have:  
\begin{align}
\label{q:SDE_ScaledPaths_1}
\widehat{L}_n(s) & = S_n 
\Bigl( y^n + M^{(b)}_{(l)}( \bar{I}^{n,(l)}_T(\lfloor s/\epsilon_n^2 \rfloor))   +
M^{(c)}_{(l)}(\bar{C}^n_T(\lfloor s/\epsilon_n^2 \rfloor))   
+ \mathcal{Y}^{\text{dep}}_{(l)}(\bar{D}^{n,(l)}_T( \lfloor s/\epsilon_n^2 \rfloor )) \Bigr ) \nonumber \\
\widehat{R}_n(s) & = S_n 
\Bigl(  x^n + M^{(a)}_{(r)}( \bar{I}^{n,(r)}_T(\lfloor s/\epsilon_n^2 \rfloor))   +
M^{(c)}_{(r)}(\bar{C}^n_T(\lfloor s/\epsilon_n^2 \rfloor))   
+ \mathcal{Y}^{\text{dep}}_{(r)}(\bar{D}^{n,(r)}_T( \lfloor s/\epsilon_n^2 \rfloor )) \Bigr ) \nonumber \\
0 & = \int_{0}^s \mathbf{1}_{ \{ \widehat{R}_n(u) > \widehat{L}_n(u)\}} 
d \bar{C}_{n,T}(u)\text{ and }
\end{align}
for any $s \in [0,T]$ we have 
\begin{align}
\label{eq:SDE_ScaledPaths_2}
|s  - (\bar{C}_{n,T}(s) +  \bar{D}^{n,(\alpha)}_T(s) + 
\bar{I}^{n,(\alpha)}_T(s))| \leq 3\epsilon_n\text{ for }\alpha \in \{ l, r \}.
\end{align}
We reiterate that unlike the coalescing system of branching paths in \cite{SS08} or in \cite{EFS17}, in this work we are dealing with paths with long range interactions.

We first focus on arguing that the limit $\lim_{n \to \infty} S_n((\mathcal{Y}^{\text{dep}}_{(l),j}, \mathcal{Y}^{\text{dep}}_{(r),j}))$ exists and understanding it. 
We acknowledge that we don't have a precise understanding of the joint distribution 
of this process. Fortunately, for our purpose is not required and we will follow the approach 
outlined in \cite{EFS17} as discussed below. 
We recall $\bar{b} = b\frac{(1-p)}{(2-p)^2}$ (see (\ref{def:Drift_Param})). 
Since, increments of each of these unscaled processes are bounded by $1$, 
it is not difficult to see that the processes $\{ \bar{C}_{n,T}, \bar{D}^{(r)}_{n,T} ,
\bar{D}^{(l)}_{n,T}, \bar{I}^{(r)}_{n,T}, \bar{I}^{(l)}_{n,T} : n \geq 1\}$
are jointly tight. The next lemma is a stronger version of Lemma \ref{lem:DependentStateTotalTime} and gives us that under diffusive scaling, the total time 
spent in the {\it extended} dependent state converges to $0$.   
\begin{lemma}
\label{lem:ExtDependentStateTotalTime}
We have 
\begin{align*}
\lim_{n \to \infty} \bar{D}^{(r)}_{n,T} = \lim_{n \to \infty} \bar{D}^{(l)}_{n,T} \to 0\text{ in probability},
\end{align*}
where $0$ denotes the constant zero map defined on $[0,T]$.
\end{lemma}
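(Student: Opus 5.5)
Since each $\bar{D}^{(\alpha)}_{n,T}$ is non-decreasing with $\bar{D}^{(\alpha)}_{n,T}(0)=0$, it suffices to prove $\bar{D}^{(\alpha)}_{n,T}(T)\to 0$ in probability for $\alpha\in\{l,r\}$; the uniform statement on $[0,T]$ then follows from monotonicity. I treat $\alpha=r$; the case $\alpha=l$ is symmetric.

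Decompose the steps counted by $\bar{D}^{n,(r)}_T$ during the first $\lfloor T/\epsilon_n^2\rfloor$ steps into three groups.
\begin{itemize}
\item[(a)] Steps taken in the genuine dependent state. Their scaled total is $D_{n,T}(T)$.
\item[(b)] Steps inside a visit to the independent state that do not belong to a complete renewal block $[\beta^{(r)}_\ell,\beta^{(r)}_{\ell+1}]$ contained in that visit.
\item[(c)] The analogous boundary steps inside a visit to the common state.
\end{itemize}
By Corollary \ref{cor:TimeDivisionDist_Equal}, the coupled time division has the same law as the original one. Lemma \ref{lem:DependentStateTotalTime} then shows that group (a) tends to $0$ in probability. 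It remains to control (b) and (c).

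For each visit to the independent or common state, the discarded steps lie in at most two incomplete renewal blocks, one at the entrance and one at the exit. Each such piece is bounded by the overshoot of the renewal sequence $\{\beta^{(r)}_\ell\}$ past a stopping time. Proposition \ref{prop:DualRenewalSteps}(i) says the inter-renewal times are geometric with parameter $\widehat{p}=2(1-p)/(2-p)^2$, independently of the past. Hence, conditionally on $\overline{\mathcal{F}}_{\tau}$ at an entrance or exit time $\tau$, the residual time to the next renewal is stochastically dominated by a geometric variable with parameter $\widehat{p}$, by memorylessness. The exact Markov structure of Proposition \ref{prop:GenDualPath_Markov} gives the same geometric tail. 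The expected number of discarded steps per visit is therefore at most $2/\widehat{p}$.

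The number of visits to the independent and common states is at most the number of visits to the dependent state plus one, since by Remark \ref{rem:DepState_CoalState} entries to the common state come only from the dependent state. Consecutive independent visits are separated by dependent or common visits. The proof of Lemma \ref{lem:DependentStateTotalTime} bounds the expected number of dependent visits by
\[
\frac{T p\beta_n}{(1-p\beta_n)(1-p)^3\epsilon_n^2}.
\]
Consequently
\[
\E\bigl(\bar{D}^{(r)}_{n,T}(T)\bigr) \le \E\bigl(D_{n,T}(T)\bigr) + \epsilon_n^2\,\frac{2}{\widehat{p}}\Bigl(2\,\frac{T p\beta_n}{(1-p\beta_n)(1-p)^3\epsilon_n^2}+2\Bigr),
\]
and by (\ref{eq:Limit_b}) this is $O(\beta_n)+O(\epsilon_n^2)\to 0$. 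Markov's inequality then gives convergence in probability.

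The main obstacle is making the overshoot domination rigorous. Entrance and exit times of the states are stopping times for $\overline{\mathcal{F}}_k$, but the renewal sequence of a single path under the coupled construction is driven by different collections $\psi^{(a)},\psi^{(c)}$ and the mixed ones. I would check that the event $\{$the dual step lands in $\Z\}$ still has conditional probability at least $\widehat{p}$ given $\overline{\mathcal{F}}_k$. This follows from the proof of Proposition \ref{prop:DualRenewalSteps}, since the lower bound there holds for any $\alpha$-type step regardless of which i.i.d. collection is used. One subtlety needs care: the knowledge of $\mathbf{1}_{F^{k+1}}$ biases the $J$ variables. In dependent steps I would not need the renewal bound at all, as those steps are already counted in group (a). Since $J$ is geometric, conditioning on $J>j$ leaves a shifted geometric variable, and its parity probabilities yield the same kind of uniform lower bound. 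I would verify this with a short computation.
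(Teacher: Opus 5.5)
Your proposal is correct and follows the paper's proof. You split the extended dependent time into genuine dependent steps, controlled by Lemma \ref{lem:DependentStateTotalTime}, and boundary residuals of independent and common visits. You bound the number of such visits by the number of dependent visits plus one, charge $O(1)$ residual steps per visit, and conclude via $\epsilon_n^2\cdot\beta_n/\epsilon_n^2=\beta_n\to 0$.

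One small correction, which the paper also glosses over. The exit residual, from the last renewal inside a visit to the exit, is an age at a stopping time, not an overshoot, so memorylessness does not bound it directly. It is still easy to handle. The two rows in Proposition \ref{prop:IntegerNonintegerMarkov} coincide, so the integer-hitting indicators are i.i.d. Bernoulli$\bigl(2(1-p)/(2-p)^2\bigr)$ and independent of the past. This also makes your worry about conditioning on $\mathbf{1}_{F^{k+1}}$ moot. Hence the longest renewal gap before step $\lfloor T/\epsilon_n^2\rfloor$ is $O(\log(1/\epsilon_n))$ with high probability, and the scaled residual total is $O_P\bigl((\beta_n+\epsilon_n^2)\log(1/\epsilon_n)\bigr)\to 0$.
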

\begin{proof}
W.l.o.g. we show that $\bar{D}^{(r)}_{n,T} \to 0$ in probability 
as $n \to \infty$. The same argument holds for  $\bar{D}^{(l)}_{n,T}$ too.
Lemma \ref{lem:DependentStateTime}, gives us that 
\begin{align*}
D_{n,T} \to 0 \text{ as }n \to \infty \text{ in probability}.
\end{align*}
Recall that $\bar{D}^{(r)}_{n,T}$ is the total time spent in the extended dependent state
by the right dual path. Note that after each entrance 
to the independent state, exit must happen necessarily 
through an entrance to the dependent state only. Therefore, the number of entrances to 
the independent state  can be at most one more than the number of entrances to the dependent state and as a result,  the scaled number of expected entrances to 
the independent state in $\lfloor T/\epsilon^2 \rfloor $ many steps converges to zero.
Similarly, each entrance to the common state must precede 
through an exit from the dependent state only. 
Therefore,  the scaled number of expected entrances to 
the common state in $\lfloor T/\epsilon^2 \rfloor $ many steps converges to zero.
Further, during each visit either to the independent state or to the common state, 
there can be some residual steps, which would be adjusted 
and counted towards the $\bar{D}^{n,(r)}_{T}$ function. 
We show that under diffusive scaling, the contribution of these scaled residual 
steps is zero. We present the argument for a visit to the independent 
state and similar argument holds for a visit to the commons state. 
    
These residual steps are precisely of two categories: 
\begin{itemize}
\item[(i)] Intermediate steps between the entrance to the independent state 
and hitting the next renewal step for the process 
$\{ \widehat{h}^{k}_{(r)}(x^n,0) : k \geq 0\}$ while being in the same 
(independent) state and  
\item[(ii)] intermediate steps between hitting the last renewal step 
for the process $\{\widehat{h}^{k}_{(r)}(x^n,0) : k \geq 0\}$ within that visit and exit from the independent state (i.e., entrance to the dependent state).
\end{itemize}
By Proposition \ref{prop:DualRenewalSteps}, we have that the expected numbers of 
both types of residual steps (both can be zero also) are bounded by the expected renewal time which is bounded by uniform constants depending only on process parameters. 
Therefore, during each visit scaled number of
expected residual steps (of both types) converges to zero. Therefore,
under diffusive scaling, the expected total time spent in the extended dependent 
state converges to zero and this completes the proof. 
\end{proof}

Because of Lemma \ref{lem:ExtDependentStateTotalTime}, it is enough to 
show that the sequence $\{ S_{n}(\mathcal{Y}^{\text{dep}}_{(l)}(j), 
\mathcal{Y}^{\text{dep}}_{(r)}(j)) : n \geq 1 \}$ is tight as that would 
directly imply that 
$$
S_{n} \bigl( \{ \mathcal{Y}^{\text{dep}}_{(l)}(j), 
\mathcal{Y}^{\text{dep}}_{(r)}(j) : j \geq 1\} \bigr ) \rightarrow (0,0)
\text{ as }n \to \infty,
$$
where, $0$ denotes the constant zero map  over the {\it domain $\{ 0\}$}. 
Tightness of the sequence $\bigl \{ S_{n} \bigl ( \{\mathcal{Y}^{\text{dep}}_{(l)}(j), 
\mathcal{Y}^{\text{dep}}_{(r)}(j) : j \geq 1\}                                                                                                                                                                                                                                            \bigr ) : n \geq 1 \bigr \}$ follows from tightness 
of the family $\{ S_{n}(\NN^n, \widehat{\NN}^n): n \geq 1 \}$
which holds due to Theorem \ref{thm:ConvLeftRight_Jt} (see \cite{SVZ24}).



We mentioned that the processes, $\bar{C}^n_T, \bar{I}^{n,(r)}_T$ and $ \bar{I}^{n,(l)}_T$, are  obtained through linear interpolation and 
their increments are bounded by one. Hence, as mentioned earlier, 
the sequence $\{(\bar{C}_{n,T}, \bar{I}^{(r)}_{n,T}, \bar{I}^{(l)}_{n,T}, 
\bar{D}^{(r)}_{n,T}, \bar{D}^{(l)}_{n,T}) : n \geq 1\}$ is tight and  (\ref{eq:SDE_ScaledPaths_2}) ensures that for any sub-sequential limit  
$(C_T, I^{(r)}_T, I^{(l)}_T, 0, 0)$ we have  
$$
I^{(r)}_T(s) + C_T(s) = s = I^{(l)}_T(s) + C_T(s)\text{ for all }s \in [0,T],
$$ 
implying $I^{(r)}_T = I^{(l)}_T$.
Finally, the earlier discussion together with Proposition \ref{prop:DualPathDiffLimit} suggest that the sequence 
\begin{align}
\label{eq:TightSubseq_1}
\Bigl \{ \Bigl ( \widehat{L}_n, \widehat{R}_n, C_{n,T}, I^{(r)}_{n,T}, I^{(l)}_{n,T}, D^{(r)}_{n,T},
D^{(l)}_{n,T} , S_n(M^{(b)}_{(l)},M^{(a)}_{r}, M^{(c)}_{(l)}, M^{(c)}_{r})\Bigr ) : 
n \geq 1 \Bigr \}
\end{align}
is tight and, as $n \to \infty$, a general sub sequential limit of (\ref{eq:TightSubseq_1})
is given by  
$$
\bigl ( \widehat{L}, \widehat{R}, C_T, I_T, I_T, 0, 0, \widehat{B}_{(a)}^l, 
\widehat{B}^r_{(b)}, \widehat{B}_{(c)} \bigr ),
$$ 
where $\widehat{B}^l$, $\widehat{B}^r$ are independent backward Brownian motions with 
drift $\bar{b}$ and $-\bar{b}$ respectively 
and $\widehat{B}_c$ denotes a standard backward Brownian motion independent of 
$\widehat{\bar{B}}^l$ and $\widehat{\bar{B}}^r$.
In order to prove the first part of Proposition \ref{prop:ConvLeftRight_Intersec}, 
it suffices to show that the subsequential limit vector 
$( \widehat{L} , \widehat{R} , C_T , I_T, \widehat{B}^l, \widehat{B}^r, \widehat{B}^c)$ 
solves the system of SDE's  in (\ref{eq:LR_SDE_new}) (modified for system of backward paths), i.e., we need to show that
\begin{align*}
 d \widehat{L}(s) & =  d \widehat{B}^l_{I_T(s)} +  d \widehat{B}^c_{C_T(s)} + \bar{b} ds ,\\
 d \widehat{R}_s & =  d \widehat{B}^r_{I_T(s)} + d \widehat{B}^c_{C_T(s)} - \bar{b} ds ,\\
 s & = I_T(s) + C_T(s) , \\
 0 & = \int_0^s \mathbf{1}_{\{ \widehat{L}(s) >  \widehat{R}(s)\}} dC_T(s).
\end{align*}
We have proved that $I_T(s) + C_T(s) = s$ for all $s \in [0,T]$. 
In order to complete the proof we need to show 
that the limiting $C$ process indeed represents the {\it intersection time} process
for $(\widehat{L}(s), \widehat{R}(s))_{s \in [0,T]}$. By construction, we must have 
$$
\widehat{L}_n(s) \geq \widehat{R}_n(s) \text{ for all }s \in [0,T], 
$$  
and therefore, for the limiting processes $(\widehat{L}(s), \widehat{R}(s))_{s \geq 0}$ we have
$\widehat{L}(s) \geq \widehat{R}(s)$  for all $s \in [0,T]$. Therefore,
in order to complete the proof it suffices to show that we have  
$$
\int_0^s \mathbf{1}_{\{ \widehat{L}(s) > \widehat{R}(s)\}} dC_T(s) = 0.
$$ 
In order to do that we need to work a little more. 
We consider the integral $\mathcal{I}^{(n)}$ defined as 
\begin{align}
\label{eq:Integral}
\mathcal{I}^{(n)} = \int_0^T \mathbf{1}_{\{(\widehat{L}_n(s) - \widehat{R}_n(s))> \epsilon_n \}} d\bar{C}_{n,T} (s). 
\end{align}
We observe that a necessary condition for the r.h.s. in (\ref{eq:Integral}) to 
contribute is that the process $\bar{C}_{n,T}$ must change over continuous time too. 
In this regard, we examine the behaviour of unscaled  function $\bar{C}^n_T$ 
over the interval $[k, k+1]$ where $(k+1)$-th step of the corresponding unscaled joint process is one of the following:
\begin{itemize}
\item[(a)] $(k+1)$-th step is an entrance step to the common state, i.e., 
 $k$-th step is in the dependent state and $(k+1)$-th step is in the common state.
 We observe that in this situation by definition of these states, 
 the two paths were distinct at the $(k-1)$-th step and they have coalesced 
 at the $k$-th step. 
\item[(b)] $(k+1)$-th step is an exit step from the common state, i.e., 
 $(k+1)$-th step is in the common state (taken in common manner) and $(k+2)$-th step is 
 in a different state. Then by definition, the two paths were remain coalesced 
 at the $k$-th step and branched away at the $(k+1)$-th step.
\end{itemize} 
Firstly, we observe that if the $(k+1)$-th step of the joint process 
is not among the above mentioned two types, then over the interval $(k , k+1)$
either the process $\bar{C}^n_T$ remains {\it constant} or the Indicator function 
considered in the integrand of (\ref{eq:Integral}) equals zero. Correspondingly, the integrand in the r.h.s. of (\ref{eq:Integral}) does not contribute for 
$(k \epsilon_n^2, (k+1)\epsilon_n^2)$. 

For Item (a) by definition, we must have that the two paths have intersected (and coalesced) at the $k$-th step. We assume that the $(k+1)$-th step is not an exit step 
from the common state, i.e., $(k+2)$-th step is in the common state too which means 
the two paths remain coalesced at the $(k+1)$-th step as well. 
Otherwise, $(k+1)$-th step would be an exit step from the common state as well and 
$(k+2)$-th step would be in some {\it other} state. We will consider 
this case in Item (b). If two paths remain coalesced till the $(k+1)$-th step, 
then the indicator function on the r.h.s. of (\ref{eq:Integral}) 
vanishes over the interval $(k \epsilon_n^2, (k+1)\epsilon_n^2)$.
On the other hand, for Item (b) the process $\bar{C}^n_T$ increases 
linearly by one over the interval $[k, k+1]$ as the $(k+1)$-th step 
is in the common state. Since the $(k+1)$-th step is an exit from
the common state, the $(k+2)$-th step must be in some other state and that would mean 
we must have a branching at the $(k+1)$-th step. Consequently, because of branching 
the two paths are at least one distance apart from each other at the $(k+1)$-th step. Therefore, we may have non-zero contribution in the r.h.s. of (\ref{eq:Integral}) 
and the contribution is dominated by 
$\mathbf{1}_{(k \epsilon_n^2, (k+1)\epsilon_n^2)}
\times \mathbf{1}_{\{\text{branching occurs at the } (k+1) \text{-th for the coalesced path}\}}$. 

Therefore, the integrand considered in 
(\ref{eq:Integral}) makes non-zero contributions 
only at the branching steps, i.e., steps where coalesced paths branch out. 
We mention here that this behaviour is slightly different from the models considered in 
\cite{SS08} or in \cite{EFS17} where possible contributions happen at the 
entrance steps to the coalesced (which is different from our common) state. 
The reason for this unusual behaviour is that the entrance and 
exit from the common state respectively run in one lag with the entrance and 
exit from the coalesced state. This was highlighted in Remark \ref{rem:ForwardPathConvergence}.

Clearly, the number of exits from the common state is bounded by the number 
of entrances into the common state. We 
mentioned in Remark \ref{rem:DepState_CoalState} that 
every entrance to the common state must precede by an 
exit from the dependent state and as observed earlier, the number of entrances to the 
dependent state is clearly dominated by the total time spent in the dependent state. Therefore, we have 
   $$ 
   D_{n,T}(T) \geq \int_0^T \mathbf{1}_{\{(\widehat{L}_n(s) - \widehat{R}_n(s))> \epsilon_n \}}  
   dC_{n,T} (s) .
   $$

Finally let $\rho_\delta(x) =0 $ if $0\leq x \leq \delta$ and $ \rho(x) = 1$ if $x \geq \delta$.
For $\epsilon_n < \delta$, we obtain 
\begin{align}
\label{eq:InteractionRightLeftPath}
D_{n,T}(T) & \geq \int_0^T 1_{\{ \widehat{L}_n(s) - \widehat{R}_n(s))> \epsilon_n \} }  
dC_{n,T} (s) \nonumber \\
& \geq  \int_0^T \rho_\delta ( \widehat{L}_n(s)- \widehat{R}_n(s)) dC_{n,T}(s).
\end{align}
Taking the limit as $n \to \infty$ in (\ref{eq:InteractionRightLeftPath}) we get 
$0  \geq  \int_0^T \rho_\delta  (\widehat{L}(s)-\widehat{R}(s))dC(s)$
which implies
$$
\int_0^T  \rho_\delta  (\widehat{L}(s)-\widehat{R}(s))dC(s) = 0 .
$$
Since this is true for all $\delta>0$ we are done.
\qed 

Next we show that for $\alpha \in \{l, r\}$ a pair of $\alpha$-type 
dual paths converge to respective backward coalescing Brownian motions with appropriate 
drifts and their first intersection time, which is same as the coalescing time, jointly 
converge too. We need to introduce some notations.   

Fix $\y_1, \y_2 \in \R^2$ with $\y_1(2) = \y_2(2) = 0$. 
Choose $\{(\y^n_1, \y^n_2) : n \geq 1 \} \subset \widehat{V}$ such that
$\y^n_1(2) = \y^n_2(2) = 0$ and $S_n(\y^n_1, \y^n_2) \to (\y_1, \y_2)$
as $n \to \infty$. For $\alpha \in 
\{ l, r \}$, let $\widehat{\pi}^i_{(\alpha)}$ denote the $\alpha$ type dual path 
starting from $\y^n_i$. Let $\widehat{\tau}_{(\alpha)}$ denote the first intersection time, 
which is the coalescing time as well, between $\widehat{\pi}^1_{(\alpha)}$ and
$\widehat{\pi}^2_{(\alpha)}$. Unlike the first intersection time between left-right dual paths, it can be proved that the r.v. $\widehat{\tau}_{(\alpha)}$ is finite a.s. 
\begin{proposition}
\label{prop:ConvLeftLeft_Coalesc}
For any $b \in [0, \infty)$, as $n \to \infty$, the process $S_n ( 
\widehat{\pi}^{1}_{\alpha} , \widehat{\pi}^{2}_{\alpha}, \widehat{\tau}_{(\alpha)} \bigr )$ converges in distribution to a limiting process $(\widehat{X}^1_{\alpha}, \widehat{X}^2_{\alpha}, \widehat{\tau}_{(\alpha)})$, where $(\widehat{X}^1_{\alpha}, \widehat{X}^2_{\alpha})$ denotes $\alpha$-type 
coalescing dual Brownian motions with drift $\bar{b}$  starting from $\y_1, \y_2$ 
respectively and $\widehat{\tau}_{(\alpha)}$ denotes the coalescing time. 

A similar statement holds for the sequence of scaled $\alpha$-type forward paths as well. 
\end{proposition}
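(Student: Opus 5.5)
We prove Proposition \ref{prop:ConvLeftLeft_Coalesc} by repeating the coupling argument used for Proposition \ref{prop:ConvLeftRight_Intersec}, specialised to $\alpha_1=\alpha_2=\alpha$. The case $\alpha=o$ (i.e.\ $b=0$) is covered by Theorem \ref{thm:JtConvBweb}. So fix $\alpha\in\{l,r\}$. We run the coupled construction with the three independent collections $\psi^{(a)},\psi^{(b)},\psi^{(c)}$ for the pair $\widehat{\pi}^1_{(\alpha)},\widehat{\pi}^2_{(\alpha)}$. Up to the first entrance to the common state, the left path uses $\psi^{(a)}$ in the independent state and the right path uses $\psi^{(b)}$. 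From that entrance on, both use $\psi^{(c)}$.

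The key structural simplification is that for equal types the common state is absorbing: $\tau^{n,C}_1$ is the only entrance, and $\tau^{n,C}_j=\infty$ for $j\ge 2$. This is because $\widehat{h}_{(\alpha)}$ is a deterministic function of the common collection, so coalesced $\alpha$-paths stay together. Consequently the time decomposition has only one visit to the common state, and no analogue of the integral $\mathcal{I}^{(n)}$ is needed. Before $\tau^{n,C}_1$ the joint process alternates between independent and dependent visits. As in Lemma \ref{lem:DependentStateTime}, each dependent visit has duration dominated by a geometric variable with success parameter $(1-p)^3$, and it ends in coalescence with probability at least $(1-p)^3$ (Lemma \ref{lem:CoalescenceProb}). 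Hence the number of dependent visits before $\tau^{n,C}_1$ is stochastically dominated by a geometric variable with parameter $(1-p)^3$, uniformly in $n$. It follows that the total dependent time, including the residual renewal-block steps counted in the extended dependent state, is $O_P(1)$ unscaled. After scaling by $\epsilon_n^2$ it tends to $0$ in probability, giving the analogue of Lemma \ref{lem:ExtDependentStateTotalTime}. The spatial displacement accumulated during these steps also scales to $0$, since it is a sum of $O_P(1)$ many increments, each with geometric tails.

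With the dependent contributions negligible, we write the scaled paths as in (\ref{q:SDE_ScaledPaths_1}). Before $\widehat{\tau}_{(\alpha)}$ the two paths are driven by the independent chains $M^{(a)}_{(\alpha)},M^{(b)}_{(\alpha)}$ run at time $\bar I$, and afterwards by the common chain $M^{(c)}_{(\alpha)}$. By Corollary \ref{cor:DualPathDiffLimit}, each of these converges to a backward Brownian motion with drift $\bar b$ for $\alpha=l$ ($-\bar b$ for $\alpha=r$), and the three limits are independent. Tightness of the full vector, including $S_n(\widehat{\tau}_{(\alpha)})\wedge T$, is immediate. Any subsequential limit $(\widehat{L},\widehat{R},\tau^*)$ therefore consists of two independent drifted Brownian motions up to $\tau^*$ and a common motion after it. Order is preserved, and $\widehat{L}=\widehat{R}$ on $[\tau^*,T]$.

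The remaining step, which I expect to be the main obstacle, is to identify $\tau^*$ with the first meeting time $\widehat{\tau}$ of the limiting independent motions. The inequality $\tau^*\ge\widehat{\tau}$ follows from the limit relations above. For $\tau^*\le\widehat{\tau}$, we must rule out the paths coming within $o(\epsilon_n^{-1})$ of each other on the lattice scale without coalescing for a macroscopic time. I would handle this with the strong Markov property at the first time the unscaled distance falls below $\delta/\epsilon_n$. The difference of the two independent chains behaves like a mean-zero walk (the drifts cancel since the types are equal), so it hits distance $0$ or an overlap configuration within time $o_\delta(1)\epsilon_n^{-2}$ with high probability. Each overlap, i.e.\ each dependent visit, then gives coalescence with probability at least $(1-p)^3$, so coalescence follows after $O_P(1)$ further overlaps. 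Standard Brownian estimates then show that the limit difference, started at $\delta$, hits $0$ within time $o_\delta(1)$. Letting $\delta\downarrow0$ yields $\tau^*=\widehat{\tau}$, which identifies the limit as coalescing drifted Brownian motions together with their coalescing time. The forward case is identical, with the modification of Remark \ref{rem:ForwardPathConvergence}.
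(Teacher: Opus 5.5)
Your proposal follows the paper's route: the $\psi^{(a)},\psi^{(b)},\psi^{(c)}$ coupling, absorption in the common state for equal types, and negligibility of the extended dependent time, which you usefully sharpen to $O_P(1)$ unscaled steps. It is also more explicit than the paper's brief proof, which does not identify the limiting coalescence time. Your aside equating $b=0$ with $\alpha=o$ is inaccurate, since $\beta_n$ may be positive when $b=0$, but your general argument with $\bar b=0$ covers that case.

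The step you single out as the main obstacle, $\tau^*\le\widehat\tau$, needs no discrete hitting estimate. The limit paths are ordered, and on $[0,\tau^*]$ their difference (right minus left) coincides with a driftless Brownian motion defined on all of $[0,T]$. By the strong Markov property, that Brownian motion becomes strictly negative immediately after its first zero $\widehat\tau$, so $\tau^*>\widehat\tau$ would contradict non-crossing.
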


\begin{proof}
The argument is exactly same as that of Proposition \ref{prop:ConvLeftRight_Intersec} and 
therefore, we only provide brief heuristics here. Fix any $T > 0$ and it is enough 
to show that the scaled $\alpha$-type dual path trajectories restricted over the time 
interval $[0,T]$ together with their scaled coalescing time converge to 
the claimed limit. As in the proof of Proposition 
\ref{prop:ConvLeftRight_Intersec}, Lemma \ref{lem:DependentStateTotalTime} gives  
us that the for the scaled process the total amount of time being spent 
in the dependent state in the time interval $[0,T]$ converges to $0$ and therefore, 
the proof follows. Note that for the said process, the coalesced state acts as an 
absorbing state.    
\end{proof}

Before we proceed further, we recall that we prove both the Propositions
essentially for dual paths. Below in Remark \ref{rem:Tightness} we mention 
the minor modification required to deal with forward paths and justify that 
both propositions hold for forward paths as well. 
\begin{remark}  
\label{rem:ForwardPathConvergence}
For any $(x,t) \in \Z^2$ it is not difficult to observe that 
the process $\{ h^j_{(\alpha)}(x,t)(1) : j \geq 1 \}$ is a random walk with i.i.d. 
increments with mean $b_{(\alpha)}$ and variance $\sigma = \sigma(p)$ (defined as in \ref{def:Diff_Coeff}), where 
\begin{align*}
b_{(\alpha)} = 
\begin{cases}
+ \beta_n\frac{(1-p)}{(2-p)^2} & \text{ if }\alpha = r \\
- \beta_n\frac{(1-p)}{(2-p)^2} & \text{ if }\alpha = l .
\end{cases}
\end{align*} 
For a proof we refer the reader tp Lemma 5.3 of \cite{SVZ24}.
Further, given that  $h^j_{(\alpha)}(x,t)  = (v, t + j ) \in V$, 
for the $(j+1)$-th step, the exploration region $E^{j+1}_{(x,t)}$ is given by 
$$
E^{j+1}_{(x,t)} := \{ (w, t + j + 1) \in \Z^2 : 
v - J^-_{(v,t+1)} \le w \le v + J^+_{(v,t+1)}\}
$$
as the step $h_{(\alpha)}(v,t + j) = h^{j+1}_{(\alpha)}(x,t)$ is measurable 
w.r.t. the $\sigma$-field 
$$
\sigma \bigl ( B_{(w,t+j+1)}, 
R_{(w,t+j+1)}, \text{Br}_{(v, t + j)} : w \in \Z, 
v - J^-_{(v,t+1)} \le w \le v + J^+_{(v,t+1)} \bigr ).
$$ 
For $(x_1,t), (x_2,t) \in \Z^2$ considering the joint process
$\{(h^j_{(\alpha_1)}(x_1,t), h^j_{(\alpha_2)}(x_1,t)) : j \geq 0 \}$
we observe that as long as the respective exploration regions don't overlap, 
both the processes evolve independently. This observation allows us to
partition the total time of evolution of the joint process of two Howard paths
 into three categories 
as in the case of dual paths. Further, straightforward modification the argument of 
of Lemma \ref{lem:CoalescenceProb} gives us that every time the (joint) process 
hits the dependent state, it has a positive probability 
(uniformly bounded away from zero) to jump to the coalesced state at the very next step.           
This allows us to extend Lemma \ref{lem:DependentStateTotalTime} for 
forward paths as well and obtain both the propositions for forward paths.  
\end{remark}

For any $\alpha_1, \alpha_2 \in \{ l, r\}$, Proposition \ref{prop:ConvLeftRight_Intersec} and 
Proposition \ref{prop:ConvLeftLeft_Coalesc} together give joint 
convergence for a pair of $\alpha_1, \alpha_2$ (forward as well as dual) paths
together with their first intersection time.
In order to complete the proof of Theorem \ref{thm:ConvLeftRight_IntersecTime_Jt}
we need to show that the said convergence holds for finitely many $\alpha_1, 
\alpha_2$ type paths as well. As discussed before, this completes the proof 
of Theorem \ref{thm:DoubleNet_Conv}.  

\noindent{\bf Proof of Theorem \ref{thm:ConvLeftRight_IntersecTime_Jt}:}
We only need to show that Proposition \ref{prop:ConvLeftRight_Intersec} 
(as well as Proposition \ref{prop:ConvLeftLeft_Coalesc})
can be extended for finitely many paths. The argument is identical to that of the 
proof of Proposition 5.2 of \cite{SS08}. Therefore, we only present the brief heuristics 
here. We recall that the construction of finitely many left-right Brownian paths, as discussed in Subsection \ref{subsec:BrownianNet} is an inductive construction which is made up of several independent pieces ({\it over time}) consisting of segments
of left paths, right-most paths and pair(s) of left-right paths.
We can follow the same inductive construction for the {\it corresponding} pieces.
The stopping times used in this construction are continuous functionals with
respect to the law of independent evolutions of paths within each such piece. 
As a result, Theorem \ref{thm:ConvLeftRight_IntersecTime_Jt} follows from 
Proposition \ref{prop:ConvLeftRight_Intersec} and Proposition \ref{prop:ConvLeftLeft_Coalesc}. This completes the proof.
\qed

\begin{remark}
\label{rem:Tightness}
Finally, we would like to highlight that Remark \ref{rem:ConvLeftRight_Intersec} highlights that our technique works for the forward Howard paths as well and gives an 
independent efficient proof in terms of converge to the 
left-right Brownian web w.r.t. a {\it stronger} topology (see Theorem \ref{thm:ConvLeftRight_IntersecTime_Jt}). The stated convergence, 
ensures that the sequence $\{ S_n (\NN^n, \widehat{\NN}^n) : n \geq 1\}$ is tight
as well and gives a self-contained proof.  
\end{remark}

\end{document}